\documentclass{article}
\usepackage[english]{babel}
\usepackage[utf8]{inputenc}
\usepackage{tabularx}
\usepackage{etoolbox}
\usepackage{layout}
\usepackage{multirow}
\usepackage{subcaption}
\usepackage{afterpage}
\usepackage{amsmath}
\usepackage{amsthm}
\usepackage{amssymb}
\usepackage{enumitem}
\usepackage{float}
\usepackage{amsfonts}
\usepackage[nobysame]{amsrefs}
\usepackage{bbm}
\usepackage[font={sf, small},skip=0 cm]{caption}
\usepackage{graphicx}
\usepackage{mathtools}
\usepackage{fancyhdr}
\usepackage{xcolor}
\usepackage{listings}
\usepackage{comment}
\usepackage[mathscr]{euscript}
\usepackage[toc,page]{appendix}
\usepackage[top=2.2cm, bottom =2.2 cm,left=3.5cm,right=3.5cm,includeheadfoot]{geometry}
\usepackage{titlesec}
\titleformat{\subsection}{\large\bfseries}{\thesubsection}{1em}{}
\usepackage{amsthm}

%Introduzioni Teoremi
\theoremstyle{plain}
\newtheorem{thm}{Theorem}[section]
\theoremstyle{plain}
\newtheorem{lemma}[thm]{Lemma}

\theoremstyle{plain}
\newtheorem*{thm*}{Theorem}
\theoremstyle{plain}
\newtheorem{prop}[thm]{Proposition}
\theoremstyle{plain}
\newtheorem{corollary}[thm]{Corollary}
\theoremstyle{definition}
\newtheorem{definition}[thm]{Definition}
\theoremstyle{remark}
\newtheorem{remark}{Remark}
%Introduzione Comandi

\DeclareMathOperator{\totdertime}{\frac{d}{dt}}

\DeclareMathOperator{\differ}{D}

\DeclareMathOperator{\diver}{div}
\DeclareMathOperator{\rotore}{curl}

\DeclareMathOperator{\identity}{I}
\DeclareMathOperator{\lapl}{\Delta}

\DeclareMathOperator{\dist}{\mathbf{d}}
\DeclareMathOperator{\idnt}{I}

\usepackage{animate}
\usepackage{float}
\usepackage{enumitem}
\usepackage{authblk}

\title{\centering Strong convergence of the vorticity and conservation\\of the energy for the $\alpha$-Euler equations}
\author[1]{Stefano Abbate}
\author[2]{Gianluca Crippa}
\author[3]{Stefano Spirito}

\affil[1]{\small Gran Sasso Science Institute (GSSI), Viale Francesco Crispi 7, I-67100, L'Aquila, Italy}
\affil[2]{\small Departement Mathematik und Informatik, Universität Basel, Spiegelgasse 1, CH-4051, Basel, Switzerland}
\affil[3]{\small DISIM - Dipartimento di Ingegneria e Scienze dell'Informazione e Matematica, Università degli Studi de L'Aquila, Via Vetoio, I-67100, L'Aquila, Italy}

\date{}

\begin{document}

\maketitle

\begin{abstract}
    In this paper, we study the convergence of solutions of the $\alpha$-Euler equations to solutions of the Euler equations on the $2$-dimensional torus. In particular, given an initial vorticity $\omega_0$ in $L^p_x$ for $p \in (1,\infty)$, we prove strong convergence in $L^\infty_tL^p_x$ of the vorticities $q^\alpha$, solutions of the $\alpha$-Euler equations, towards a Lagrangian and energy-conserving solution of the Euler equations. Furthermore, if we consider solutions with bounded initial vorticity, we prove a quantitative rate of convergence of $q^\alpha$ to $\omega$ in $L^p$, for $p \in (1, \infty)$. 
\end{abstract}

\section{Introduction}
\let\thefootnote\relax\footnote{{\em2023 Mathematics Subject Classification.} Primary: 76B03, 35Q35. Secondary: 35Q31, 76F65.
	
\hspace{0.2 cm}{\em Key words and phrases.} 2D Euler equations, 2D alpha-Euler equations, Lagrangian solutions, conservation of the energy.
}
In this paper, we consider the incompressible $\alpha$-Euler equations on the two-dimensional torus, which, given $\alpha>0$, read as
\begin{flalign}
	\begin{cases}
		\partial_t v^\alpha +u^\alpha\cdot \nabla v^\alpha+\sum_j v^\alpha_j \nabla u^\alpha_j = -\nabla p ,&\quad \text{on}  \quad(0,T) \times \mathbb{T}^2 \\
		v^\alpha := u^\alpha - \alpha \lapl u^\alpha ,&\quad \text{on} \quad(0,T) \times \mathbb{T}^2 \\
		\diver u^\alpha = \diver v^\alpha = 0,&\quad \text{on}\quad (0,T) \times \mathbb{T}^2 \\
		u^\alpha(0, \cdot) = u^\alpha_0, &\quad \text{on} \quad\mathbb{T}^2
	\end{cases}\label{eq:alphaeuler}
\end{flalign}
and our primary objective is the rigorous study of the limit as $\alpha\to 0$ of solutions of \eqref{eq:alphaeuler}. Formally, if we substitute $\alpha= 0$ in \eqref{eq:alphaeuler}, we obtain that $v^\alpha=u^\alpha$, and then, employing the identity $$\sum_j u^\alpha_j \nabla u^\alpha_j= \frac{\nabla |u^\alpha|^2}{2}$$ and defining 
\begin{math}
	\pi^\alpha:= p+|u^\alpha|^2/2,
\end{math} 
we obtain the two-dimensional incompressible Euler equations:
\begin{equation}\label{eq:eulerintro}
\begin{cases}
\partial_t u+(u\cdot\nabla)u=-\nabla \pi, &\quad \text{on} \quad(0,T) \times \mathbb{T}^2 \\
\diver u=0, &\quad \text{on} \quad(0,T) \times \mathbb{T}^2\\
u(0, \cdot)=u_0, &\quad \text{on} \quad \mathbb{T}^2.
\end{cases}
\end{equation}

The $\alpha$-Euler equations are part of a larger class of approximation schemes called {\em Large Eddies Simulations (LES)}, which have been first introduced in \cite{smago} by Smagorinsky and later generalized by Leonard \cite{leonard}. LES approximations are relevant for numerical simulations of fluids in turbulent regime.   Indeed, due to the high number of scales needed in turbulent dynamics,{ \em Direct Numerical Simulation (DNS) }of fluid equations are computationally very demanding. The idea of LES models is based on the fact that the whole range of flow scales may not be necessary in order to have an accurate approximation. Therefore, a filter {\em cutting the small scales} is applied to the velocity. Since the operation of filtering does not commute with the nonlinear convective term, one needs to estimate the commutator between the filter and the convective term and this procedure gives rise to an approximation. We refer to \cite{bookberselli} and \cite{gop} for more details on the derivation and the motivations of LES models. In particular, we refer to \cites{fht, mars} for the derivation of the $\alpha$-Euler equations. 

Concerning the system \eqref{eq:alphaeuler}, the filter is given by the so-called Helmholtz filter, which is exactly the second equation in \eqref{eq:alphaeuler}, namely
\begin{equation}
	u^\alpha:= (\idnt-\alpha \lapl)^{-1} v^\alpha. \label{eq:helmotz}
\end{equation} 
The action of the filter \eqref{eq:helmotz} can be written in Fourier variables as
\begin{equation}
	\widehat{u^\alpha}(k) =\frac{	\widehat{v^\alpha}(k)}{1+\alpha|k|^2}, \qquad \forall k \in \mathbb{Z}^2.\label{eq:fourierhelm}
\end{equation}
The denominator of the right hand side of \eqref{eq:fourierhelm} diverges as the frequency grows, {\em cutting} the high frequencies as a consequence. 

The analysis of the limit as $\alpha\to0$ for solutions of the $\alpha$-Euler equations has been widely studied in literature. In particular, we mention the important result in \cites{lopestitizang}, where the authors analyse the convergence in $L^2$ for the velocity fields in two dimensions for smooth solutions of the \eqref{eq:alphaeuler} equations on a bounded domain with Dirichlet boundary conditions towards smooth solutions of the Euler equations. The importance relies on the fact that in the limit of $\alpha\to 0$ no boundary layers are created for the velocity. The result in \cites{lopestitizang} has been extended in \cite{bi} where the convergence of the velocity for the problem posed on bounded domains with Dirichlet boundary conditions has been proved for less regular solutions. We also recall the paper  \cite{bill}, where the convergence as $\alpha\to 0$ is studied for initial vorticity in the space of positive Radon measures. Finally, we refer to \cites{olishk,holm} where the relationship between the $\alpha$-Euler equations and the vortex blob method (another commonly used numerical approximation of the Euler equations) has been investigated. 

An important feature of the system \eqref{eq:alphaeuler} is that the vorticity structure of the $2D$ Euler equations is preserved in the approximation. Indeed, thanks to the presence of the term $\sum_j v^\alpha_j \nabla u^\alpha_j$ in the momentum equations of \eqref{eq:alphaeuler}, if we consider $q^{\alpha}=\rotore v^{\alpha}$, we obtain the following vorticity formulation of \eqref{eq:alphaeuler} 
\begin{flalign}
	\begin{cases}
		\partial_t q^\alpha +u^\alpha\cdot \nabla q^\alpha =0,&\quad \text{on}  \quad(0,T) \times \mathbb{T}^2 \\
		q(0, \cdot) = q_0^\alpha,&\quad \text{on} \quad\mathbb{T}^2\\
		\diver u^\alpha=\diver v^\alpha=0,&\quad \text{on}  \quad(0,T) \times \mathbb{T}^2 \\
		\rotore v^\alpha= q^\alpha,&\quad \text{on}  \quad(0,T) \times \mathbb{T}^2.
	\end{cases}\label{eq:alphaeulervort}
\end{flalign}

The vorticity formulation \eqref{eq:alphaeulervort} is particularly important for the purpose of this paper, since we are primarily interested in the analysis of the convergence of the vorticity $q^{\alpha}$ towards the vorticity of the $2D$ Euler equations in {\em strong} norms. This is reminiscent of the analogous results for the vanishing viscosity limit. In two-dimensional turbulence, the transport-structure for the vorticity and the appearance of an inverse cascade (from small to large scales) entail that vanishing viscosity solutions enjoy better properties compared to general weak solutions. In this paper, we show that the same principle holds for solutions that are the limit of the $\alpha$-Euler equations. 

In our first result, we show the strong convergence  in $L^p$ of the vorticity to a solution of the Euler equations which conserves the energy and  is Lagrangian, namely the vorticity is transported by the flow of the associated velocity, see \textsc{Definition \ref{def:lagsol}}. We refer to \textsc{Section \ref{sec:prelim}} for the relevant notations and definitions. In particular, $k$ is the Biot-Savart kernel on the torus (cf. \eqref{eq:biotsavart}).
\begin{thm}\label{thm:strongconvvort}
	Let $T>0$ arbitrary and finite, $p \in (1, \infty)$ and $\omega_0 \in L^p(\mathbb{T}^2)$ with $\int_{\mathbb{T}^2} \omega_0 = 0$. Let $\{q_0^\alpha\}$ be a sequence of functions uniformly bounded with respect to $\alpha$ in  $L^p(\mathbb{T}^2)$ with $\int_{\mathbb{T}^2} q_0^\alpha = 0$ such that
	\begin{equation*}
		q_0^\alpha \rightarrow \omega_0 \quad \text{strongly in} \enskip L^p (\mathbb{T}^2).
	\end{equation*}
	Let $(u^\alpha, q^\alpha)$ be the solution of the $\alpha$-Euler equations with initial datum $q_0^\alpha$. Then, up to subsequences, there holds, 
	\begin{equation*}
		u^\alpha \rightarrow u \quad \text{strongly in} \enskip \mathcal{C}([0,T];L^2 (\mathbb{T}^2))
	\end{equation*}
	and
	\begin{equation*}
		q^\alpha \rightarrow \omega \quad \text{strongly in} \enskip \mathcal{C}([0,T];L^p (\mathbb{T}^2)),
	\end{equation*}
and $(u, \omega)$ is a Lagrangian solution of the Euler equations.
	Moreover, for any $\delta >0$, there exists $K(\delta, \omega_0)$ such that for $\alpha$ small enough it holds
	\begin{equation}
		\sup_{t \in [0,T]}\|q^\alpha(t) -\omega(t) \|_{L^p} \leq \delta +\frac{K(\delta, \omega_0)}{\left| \log\left(\|u^\alpha-u\|_{L^1_tL^1_x}\right)\right|}+\|q^\alpha_0-\omega_0\|_{L^p}.\label{eq:stimanonquant}
	\end{equation}
	Finally, the solution $u$ conserves the kinetic energy, namely
	\begin{equation}
		\|u(t)\|_{L^2} = \|u_0\|_{L^2}, \quad \forall t \in (0,T), \quad \text{where} \quad u_0 = k \ast \omega_0.\label{eq:consenergyfinal}
	\end{equation}
\end{thm}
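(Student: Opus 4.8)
The plan is to proceed in several stages, combining compactness arguments for the $\alpha$-Euler system with the Lagrangian/DiPerna--Lions theory for the limiting transport equation, and finally deriving energy conservation from the Lagrangian structure.

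\textbf{Step 1: Uniform bounds and weak compactness.} Since $q^\alpha$ solves a transport equation \eqref{eq:alphaeulervort} with divergence-free velocity $u^\alpha$, all $L^p_x$ norms of $q^\alpha$ are conserved in time, so $\|q^\alpha(t)\|_{L^p} = \|q^\alpha_0\|_{L^p}$ is uniformly bounded. The velocity $v^\alpha$ is recovered from $q^\alpha$ via Biot--Savart on the torus, and $u^\alpha = (\idnt - \alpha\lapl)^{-1} v^\alpha$; one checks that the filtering makes $\|u^\alpha\|_{L^2}$ controlled (indeed the natural energy of \eqref{eq:alphaeuler} is $\|u^\alpha\|_{L^2}^2 + \alpha\|\nabla u^\alpha\|_{L^2}^2$, which is conserved, but already the $W^{1,p}$-type bound on $v^\alpha$ gives compactness of $u^\alpha$). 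Using the equation for $q^\alpha$ one gets a uniform bound on $\partial_t q^\alpha$ in some negative Sobolev space, hence by Aubin--Lions $q^\alpha$ is compact in $C([0,T]; H^{-s})$ and, interpolating with the $L^\infty_t L^p_x$ bound, $q^\alpha \to \omega$ weakly-$*$ in $L^\infty_t L^p_x$ and $u^\alpha \to u$ strongly in $C([0,T]; L^2)$ along a subsequence, with $u = k\ast\omega$. Passing to the limit in the weak formulation (the only delicate product is $u^\alpha q^\alpha$, which converges because $u^\alpha$ converges strongly in $L^2_{t,x}$ and $q^\alpha$ weakly in $L^p_{t,x}$ with $p>1$), the pair $(u,\omega)$ is a weak solution of \eqref{eq:eulerintro} in vorticity form.

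\textbf{Step 2: Lagrangianity and strong convergence via the quantitative stability estimate \eqref{eq:stimanonquant}.} This is the technical heart. The velocities $u^\alpha$ and $u$ both have vorticity bounded in $L^p$, hence gradients in $L^p$ (on $\mathbb{T}^2$ the Biot--Savart kernel gains one derivative), so they are in the Osgood/log-Lipschitz class that makes DiPerna--Lions and the quantitative stability estimates of Crippa--De Lellis (and its refinements for $L^p$ vorticity) applicable. The regular Lagrangian flows $X^\alpha$ of $u^\alpha$ and $X$ of $u$ satisfy a stability estimate controlling $\|X^\alpha - X\|$ in terms of $\|u^\alpha - u\|_{L^1_t L^1_x}$ through a logarithmic modulus — this is precisely where the $1/|\log(\cdots)|$ term in \eqref{eq:stimanonquant} originates. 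Since $q^\alpha = q^\alpha_0 \circ (X^\alpha)^{-1}$ (the transport is exactly Lagrangian at the $\alpha$ level because $u^\alpha$ is smooth enough), one writes
\begin{equation*}
\|q^\alpha(t) - \omega(t)\|_{L^p} \le \|q^\alpha_0\circ(X^\alpha_t)^{-1} - \omega_0\circ(X^\alpha_t)^{-1}\|_{L^p} + \|\omega_0\circ(X^\alpha_t)^{-1} - \omega_0\circ(X_t)^{-1}\|_{L^p},
\end{equation*}
where $\omega_t := \omega_0 \circ X_t^{-1}$ is by construction Lagrangian. The first term equals $\|q^\alpha_0 - \omega_0\|_{L^p}$ by measure-preservation. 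For the second term, approximate $\omega_0$ in $L^p$ by a smooth (or Lipschitz) function $\bar\omega$ with $\|\omega_0 - \bar\omega\|_{L^p} \le \delta/3$; the error from this approximation contributes $2\delta/3$ (again by measure-preservation of both flows), while $\|\bar\omega\circ(X^\alpha_t)^{-1} - \bar\omega\circ(X_t)^{-1}\|_{L^p} \le \|\nabla\bar\omega\|_\infty \|X^\alpha - X\|$ is controlled by the stability estimate, giving the $K(\delta,\omega_0)/|\log\|u^\alpha-u\|_{L^1_tL^1_x}|$ term with $K$ depending on $\|\nabla\bar\omega\|_\infty$. Combining, \eqref{eq:stimanonquant} follows; and since $\|u^\alpha - u\|_{L^1_tL^1_x}\to 0$ by Step 1 and $\|q^\alpha_0-\omega_0\|_{L^p}\to 0$ by hypothesis, taking $\limsup_\alpha$ and then $\delta\to 0$ yields $\sup_t\|q^\alpha(t)-\omega(t)\|_{L^p}\to 0$, i.e.\ strong convergence in $C([0,T];L^p)$. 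The same identity shows $\omega$ is itself Lagrangian, so $(u,\omega)$ is a Lagrangian solution in the sense of \textsc{Definition \ref{def:lagsol}}.

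\textbf{Step 3: Conservation of the energy.} Because $\omega_t = \omega_0\circ X_t^{-1}$ with $X_t$ measure-preserving, all $L^r$ norms of $\omega$ are conserved; to upgrade this to conservation of the kinetic energy $\|u(t)\|_{L^2}^2 = \|k\ast\omega(t)\|_{L^2}^2$ one notes that this quantity can be written (via Plancherel / the Biot--Savart kernel) as a bilinear functional $\int\int G(x-y)\,\omega_t(x)\,\omega_t(y)\,dx\,dy$ with $G$ the (periodic) Green's function of $-\Delta$, i.e.\ the renormalized energy; since $\omega_t$ is the push-forward of $\omega_0$ by a measure-preserving map, this double integral is invariant provided one can justify the change of variables and the fact that the self-interaction / diagonal contribution is harmless — for $\omega_0\in L^p$, $p>1$, the kernel $G\in L^q_{\mathrm{loc}}$ for all $q<\infty$ and Young's inequality makes the functional continuous on bounded sets of $L^p$, so one passes to the limit using the strong convergence from Step 2 (the conservation already holds for the smooth $\alpha$-problem, or directly from the Lagrangian representation). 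Hence $\|u(t)\|_{L^2} = \|u_0\|_{L^2}$ for all $t$.

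\textbf{Main obstacle.} The crux is Step 2: making rigorous the quantitative stability of the flows with only $L^p$ control on the vorticity (hence only an Osgood, not Lipschitz, modulus of continuity for the velocities), and carefully tracking how the smooth approximation parameter $\delta$ enters the constant $K(\delta,\omega_0)$ so that the three-term bound \eqref{eq:stimanonquant} holds uniformly in $\alpha$; this requires the Crippa--De Lellis type logarithmic estimate in its form adapted to velocity fields whose gradient is merely in $L^p$, together with the fact that $u^\alpha$ has the \emph{same} uniform $W^{1,p}$ bound as $u$ so the logarithmic constant does not blow up along the sequence.
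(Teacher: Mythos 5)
Your Steps 1 and 2 follow essentially the paper's strategy (Aubin--Lions compactness for the velocity; Crippa--De Lellis logarithmic stability of the regular Lagrangian flows plus a Lipschitz approximation of $\omega_0$ to get \eqref{eq:stimanonquant} and the Lagrangian identification), but there are two genuine problems. The serious one is Step 3: the functional $\iint G(x-y)\,\omega_t(x)\,\omega_t(y)\,dx\,dy$ is \emph{not} invariant under the Lagrangian push-forward. Measure preservation of $X_t$ only gives $\iint G(x-y)\,\omega_t(x)\,\omega_t(y)\,dx\,dy=\iint G\bigl(X_{0,t}(x)-X_{0,t}(y)\bigr)\,\omega_0(x)\,\omega_0(y)\,dx\,dy$, and the flow does not preserve $x-y$, so this is not the initial energy. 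If your argument were correct, \emph{every} Lagrangian solution with $L^p$ vorticity would conserve energy, which would resolve the open problem for $p<3/2$ recalled in the introduction. The paper instead passes to the limit in the conserved $\alpha$-norm $\|u^\alpha(t)\|_{L^2}^2+\alpha\|\nabla u^\alpha(t)\|_{L^2}^2$; the whole point is to show that $\alpha\|\nabla u^\alpha(t)\|_{L^2}^2\to 0$ uniformly in $t$, which for $p\ge 2$ follows from \eqref{eq:w1pbound} but for $p\in(1,2)$ requires the interpolation bound $\|\nabla u^\alpha\|_{L^2}\le C\alpha^{\frac12-\frac1p}\|q^\alpha\|_{L^p}$ of \textsc{Lemma \ref{lemma:iftimie}} (a Gagliardo--Nirenberg interpolation between the uniform $W^{1,p}$ bound and the non-uniform $W^{3,p}$ bound of order $\alpha^{-1}$). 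Your parenthetical ``the conservation already holds for the smooth $\alpha$-problem'' gestures at this but does not supply the needed vanishing of the extra $\alpha$-term, which is where the actual work lies.

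The second, more minor, gap is in Step 1 for $p\in(1,4/3)$: the product $u^\alpha q^\alpha$ is not uniformly in $L^1$ there ($u^\alpha$ is uniformly bounded only in $L^2\cap L^{p^*}$ with $p^*=\frac{2p}{2-p}$, and $\frac1p+\frac1{p^*}\le 1$ forces $p\ge\frac43$), so both your bound on $\partial_t q^\alpha$ in a negative Sobolev space and your passage to the limit in the vorticity weak formulation break down for small $p$. The paper avoids this by extracting the time-equicontinuity from the \emph{momentum} equation, testing against $\varphi^\alpha=(\idnt-\alpha\lapl)^{-1}\varphi$ so that $\langle\partial_t u^\alpha,\varphi\rangle$ is controlled in $H^{-4}$ using only the quadratic term $u^\alpha\otimes u^\alpha\in L^\infty_tL^1_x$ and the $\alpha$-norm, and it identifies the Lagrangian structure by duality with backward transport problems (DiPerna--Lions stability) rather than through the vorticity distributional formulation. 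Your Step 2 as written can still close the Lagrangian identification for all $p\in(1,\infty)$ once the strong $L^2$ convergence of $u^\alpha$ is obtained by that route, so this gap is repairable; the energy argument is not.
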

\bigskip 
The novelty of our approach consists in the application of techniques related to the Lagrangian perspective, introduced in the non-smooth settings in \cite{odeest} and employed for a vanishing viscosity scheme in \cite{ciampa}. We first prove the convergence of the velocity adapting the proof of \cite{bi} to this setting. Then, by using Lagrangian techniques, we show strong convergence of the vorticity providing a certain quantification of the convergence. Nonetheless, the rate of convergence is not fully quantitative since it depends (logarithmically) on the rate of convergence of the velocities.

Weak solutions of the Euler equations with vorticity in $L^p$ with $p\geq 3/2$ conserve the kinetic energy, but this is not known for $p < 3/2$ (cf. \cite{cheski}).  In Theorem \ref{thm:strongconvvort}, we also prove that limit solutions are energy conserving, for every $p \in (1, \infty]$, as previously done for the vanishing viscosity in \cite{cheski} and for the vortex blob in \cite{ciampa2}.

The second main result of this paper concerns the study of the rate of convergence in the case of solutions belonging to the Yudovich class. In particular, the next theorem shows that if we consider bounded initial vorticity, we can obtain a rate of convergence independent on $\|u^\alpha-u\|_{L^1_tL^1_x}$.

\begin{thm}\label{thm:rate} 
Let $\omega_0 \in L^\infty(\mathbb{T}^2)$ with $\int_{\mathbb{T}^2}\omega_0 = 0$. Let $q^\alpha_0$ uniformly bounded with respect to $\alpha$ in $L^\infty(\mathbb{T}^2)$ with $\int_{\mathbb{T}^2}q_0^\alpha = 0$ such that $q^\alpha_0 \rightarrow \omega_0$ in $L^p(\mathbb{T}^2)$, for every $p <\infty$.
Let $u^\alpha_0 :=(\identity- \alpha\lapl)^{-1} k \ast q^\alpha_0$ and $u_0 :=k \ast \omega_0$, then
\begin{equation} 
	\gamma_0^\alpha:=\|u^\alpha_0-u_0\|_{L^2} +\alpha \|\lapl u^\alpha_0\|_{L^2}\xrightarrow{\alpha \to 0} 0 \quad \text{and let} \quad\overline{\alpha}>0\quad \text{be s.t.}\quad \gamma_0^\alpha \leq \frac{1}{2},\quad \forall \alpha\leq \overline{\alpha}.
	\label{eq:closeenough} 
\end{equation}
 Let $(u^\alpha, q^\alpha)$ be the solution to the $\alpha$-Euler equations with initial datum $q^\alpha_0$ and let $(u, \omega)$ be its limit, which is the unique Yudovich solution to the Euler equations. 
 Then, there exist two constants $C_1$ and $C_2$ (depending on $M:=\|\omega_0\|_{L^\infty}$) such that, if $\alpha\in (0, \overline{\alpha}]$ and $T>0$ satisfy
\begin{equation} 
\alpha \leq \frac{\exp\{ 2(2- 2 \exp(C_2T))\}-\gamma_0^\alpha}{(C_1T)^2}, \label{eq:relalfaT}
\end{equation} 
it holds
\begin{displaymath} 
\|u(t)-u^\alpha(t)\|_{L^2}\leq \exp\{ 2- 2 \exp(-C_2t)\}(C_1\sqrt{\alpha}T+\gamma^\alpha_0)^{\exp(-C_2t)}+ C \sqrt{\alpha}: = K(\alpha,t), \quad \forall t \leq T.
\end{displaymath}
Moreover, there exists a value $\alpha_0 = \alpha_0(T,M, \omega_0)$ and a continuous function $\psi_{\omega_0,p,M}: \mathbb{R}^+\rightarrow\mathbb{R}^+$ vanishing at zero, such that for every $\alpha \leq \alpha_0$ there holds 
\begin{equation} 
\sup_{t \in [0,T]}\|q^\alpha(t) -\omega(t)\|_{L^p}\leq C M^{1-\frac{1}{p}}\max\left\{ \psi_{\omega_0,p,M}(K(\alpha,t)),K(\alpha,t)^{\frac{\exp{(-CT)}}{2p}} \right\},\label{eq:modulcontest} 
\end{equation} 
where $C$ depends on $M= \|\omega_0\|_{L^\infty}$.
\end{thm}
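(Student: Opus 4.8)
The plan is to prove the two bounds separately: first the quantitative $L^2$-estimate for the velocities, by an energy method combined with an Osgood-type lemma, and then the $L^p$-estimate for the vorticities, by the Lagrangian representation and the stability of flows of log-Lipschitz fields. Throughout, for each fixed $\alpha>0$ the solution $(u^\alpha,q^\alpha)$ is regular enough to justify all integrations by parts — since $q_0^\alpha\in L^\infty$ gives $v^\alpha\in L^\infty_tW^{1,r}_x$ for all $r<\infty$, hence $u^\alpha\in L^\infty_tW^{3,r}_x$ — and the smallness in $\alpha$ will only enter through uniform-in-$\alpha$ a priori bounds: $\|q^\alpha(t)\|_{L^r}=\|q_0^\alpha\|_{L^r}\lesssim_r M$, $\|v^\alpha\|_{L^\infty_{t,x}}\lesssim M$, $\|\nabla v^\alpha\|_{L^\infty_tL^r_x}\lesssim_r M$ and their counterparts for $u^\alpha$ (using that $(\idnt-\alpha\lapl)^{-1}$ has a nonnegative kernel of mass one, hence is a contraction on every $L^r$ and on $C^{0,\gamma}$), together with $\|u(t)\|_{W^{1,r}}\lesssim_r M$ and $\|u\|_{L^\infty_{t,x}}\lesssim M$ for the Yudovich solution.

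For the velocity estimate I would set $w:=u^\alpha-u$ and test the momentum equation in \eqref{eq:alphaeuler} against $u$ and the Euler momentum equation against $v^\alpha$ (the pressures drop by incompressibility); adding the $\alpha$-energy identity $\frac{d}{dt}(\|u^\alpha\|_{L^2}^2+\alpha\|\nabla u^\alpha\|_{L^2}^2)=0$ and $\frac{d}{dt}\|u\|_{L^2}^2=0$, and integrating by parts, one reaches, for $\Xi(t):=\|w(t)\|_{L^2}^2+\alpha\|\nabla w(t)\|_{L^2}^2$, an identity of the form $\frac{d}{dt}\Xi=-2\int_{\mathbb{T}^2}(w\otimes w):\nabla u\,dx+R_\alpha(t)$, where $R_\alpha$ gathers the commutator terms coming from $v^\alpha=u^\alpha-\alpha\lapl u^\alpha$. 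The crucial point is that each term of $R_\alpha$ is bounded either by $CM^2\Xi$ (the terms carrying a factor $\|\nabla w\|_{L^2}$, estimated via $\alpha\|\nabla w\|_{L^2}^2\le\Xi$ and Young's inequality) or by $C\sqrt\alpha\,M^3$ — for the latter one uses $\alpha\lapl u^\alpha=u^\alpha-v^\alpha$ and the sharp smoothing bound $\|\alpha\nabla^2u^\alpha\|_{L^2}\le\sqrt\alpha\|\nabla v^\alpha\|_{L^2}\lesssim\sqrt\alpha M$ (an elementary Fourier estimate), after choosing the integrations by parts so that no second derivative of $u$, uncontrolled in the Yudovich class, survives. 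For the leading term, the classical Yudovich logarithmic interpolation, using $\|\nabla u\|_{L^r}\le CrM$ and $\|w\|_{L^\infty}\lesssim M$, gives $|\int(w\otimes w):\nabla u|\le CM\,\Xi\log(eCM^2/\Xi)$ while $\Xi$ stays below a fixed threshold. Hence $\frac{d}{dt}\Xi\le C_1M^3\sqrt\alpha+C_2M^2\,\Xi\log(eCM^2/\Xi)$ with $\Xi(0)\le(\gamma_0^\alpha)^2+C\alpha M^2$, and an Osgood-type lemma with constant-in-time forcing produces a double-exponential bound; the assumption \eqref{eq:relalfaT} is exactly what keeps the comparison solution below the threshold on all of $[0,T]$. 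Since $\|w(t)\|_{L^2}^2\le\Xi(t)+\alpha\|\nabla u(t)\|_{L^2}^2\le\Xi(t)+C\alpha M^2$, taking square roots yields $\|u(t)-u^\alpha(t)\|_{L^2}\le K(\alpha,t)$.

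For the vorticity estimate I would use that $\diver u^\alpha=\diver u=0$ and that both $u^\alpha$ and $u$ are uniformly log-Lipschitz in space (bounded vorticity), so $q^\alpha(t)=q_0^\alpha\circ(X_t^\alpha)^{-1}$ and $\omega(t)=\omega_0\circ X_t^{-1}$ for the measure-preserving flows $X^\alpha$, $X$ of $u^\alpha$, $u$. Then $\|q^\alpha(t)-\omega(t)\|_{L^p}\le\|q_0^\alpha-\omega_0\|_{L^p}+\|\omega_0\circ(X_t^\alpha)^{-1}-\omega_0\circ X_t^{-1}\|_{L^p}$, the first term vanishing as $\alpha\to0$ by hypothesis. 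For the second I would (i) run the Osgood estimate for the flows, $\|X_t-X_t^\alpha\|_{L^\infty}\lesssim(C\|u-u^\alpha\|_{L^1_tL^\infty_x})^{\exp(-CMt)}$, where $\|u(s)-u^\alpha(s)\|_{L^\infty}$ is controlled by interpolating the $L^2$-bound $K(\alpha,s)$ of the previous step against the uniform $C^{0,\gamma}$-bound, giving $\lesssim M^{1/2}K(\alpha,s)^{1/2}$ for $\gamma$ close to $1$; (ii) transfer this to the inverse flows by the Hölder regularity of the Yudovich flow, $\|X_t^{-1}-(X_t^\alpha)^{-1}\|_{L^1}\lesssim\|X_t-X_t^\alpha\|_{L^\infty}^{\exp(-CMt)}$; and (iii) turn the resulting flow distance $\delta(\alpha,t)$ into the vorticity bound by a mollification argument ($\|\omega_0\circ Y_1-\omega_0\circ Y_2\|_{L^1}\le2\|\omega_0-\omega_0^\varepsilon\|_{L^1}+\varepsilon^{-1}\|\nabla\omega_0^\varepsilon\|_{L^\infty}\|Y_1-Y_2\|_{L^1}$, optimized over $\varepsilon$) together with the interpolation $\|\cdot\|_{L^p}\le(2M)^{1-1/p}\|\cdot\|_{L^1}^{1/p}$, licit since $\|q^\alpha(t)\|_{L^\infty},\|\omega(t)\|_{L^\infty}\le M$. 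This produces the prefactor $M^{1-1/p}$; the non-explicit modulus $\psi_{\omega_0,p,M}$ comes from the mollification rate of $\omega_0$, and, chasing the exponents through (i)--(iii) — a factor $1/p$ from the $L^1$--$L^\infty$ interpolation, a factor $1/2$ from the $L^2$--$C^{0,\gamma}$ interpolation, and a factor $\exp(-CMt)$ from the flow regularity — the explicit rate $K(\alpha,t)^{\exp(-CT)/(2p)}$; taking the maximum of the two contributions gives \eqref{eq:modulcontest}. The threshold $\alpha_0(T,M,\omega_0)$ is chosen so that \eqref{eq:relalfaT} holds and so that $\|u-u^\alpha\|_{L^1_tL^\infty_x}$ is small enough for the Osgood estimates in (i)--(ii) to be in their quantitative regime.

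I expect the energy estimate to be the main obstacle: $\alpha\lapl u^\alpha$ is only bounded, not small, in $L^2$, and a crude estimate of the commutators in $R_\alpha$ yields $O(1)$ or $O(\alpha^{1/3})$ rather than the $O(\sqrt\alpha)$ one needs, so extracting the sharp power requires both the smoothing bound $\|\alpha\nabla^2u^\alpha\|_{L^2}\lesssim\sqrt\alpha\|\nabla v^\alpha\|_{L^2}$ and integrations by parts tailored to keep every derivative off $u$; afterwards the Osgood bookkeeping must be arranged so that the resulting double exponential matches precisely the function $K(\alpha,t)$ and the constraint \eqref{eq:relalfaT}. The rest of the argument is, by comparison, fairly standard: it amounts to the usual Yudovich stability machinery applied with the velocity rate $K(\alpha,t)$ playing the role of an external forcing.
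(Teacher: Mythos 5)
Your overall architecture coincides with the paper's (Propositions \ref{thm:chemin} and \ref{prop:thm2second}): a Chemin--Yudovich energy estimate closed by Osgood's lemma for the velocities, then a quantitative flow-stability estimate plus a modulus-of-continuity and $L^1$--$L^\infty$ interpolation step for the vorticities. Within that architecture you make two genuinely different choices. For the velocities the paper compares $y^\alpha=v^\alpha-u$ in plain $L^2$, so that the only $\alpha$-remainders are the two terms in \eqref{eq:conticorrchemin}, each $O(\sqrt\alpha)$ by Lemma \ref{lemma:iftimie}, and it converts back to $u^\alpha-u$ only at the very end via $\|u^\alpha-v^\alpha\|_{L^2}=\alpha\|\lapl u^\alpha\|_{L^2}\le C\sqrt\alpha$; you instead work with $w=u^\alpha-u$ and the $\alpha$-weighted functional $\Xi$. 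For the vorticities the paper never needs an $L^\infty$ rate for $u^\alpha-u$: it runs a maximal-function estimate \eqref{eq:maxine} on $\int|X^\alpha_{t,s}-X_{t,s}|^2\,dx$, into which only $\|u^\alpha-u\|_{L^\infty_tL^2_x}\le K$ enters, and concludes by Chebyshev plus the $L^1$ modulus of continuity of $\omega_0$. Your route through the classical log-Lipschitz Osgood estimate for the flows in $L^\infty$ instead requires upgrading $K$ to an $L^\infty$ velocity rate by interpolation; since the $C^{0,\gamma}$ (equivalently $W^{1,r}$) bounds blow up as $\gamma\to1$ ($r\to\infty$) in the Yudovich class, a fixed-$\gamma$ interpolation gives an exponent strictly below $1/2$, and reaching the stated $K^{\exp(-CT)/(2p)}$ needs an additional Yudovich-type optimization in $r$; the paper's $L^2$ flow estimate avoids this loss (and the inverse-flow transfer) altogether.

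The one step that, as written, does not deliver the claimed bound is the Osgood bookkeeping for $\Xi$. If the remainder $R_\alpha$ is only bounded by a constant $C\sqrt\alpha M^3$ (no factor of the unknown), Osgood gives $\Xi(t)\le\exp\{2-2e^{-Ct}\}\left(C\sqrt\alpha T+\Xi(0)\right)^{e^{-Ct}}$, and taking square roots yields $\left(C\sqrt{\alpha}T+\Xi(0)\right)^{e^{-Ct}/2}\simeq\left(C\alpha^{1/4}\sqrt T+\gamma_0^\alpha+C\sqrt\alpha M\right)^{e^{-Ct}}$: the rate inside the double exponential degrades to $\alpha^{1/4}$, strictly weaker than the $C_1\sqrt\alpha T$ appearing in $K(\alpha,t)$, so \eqref{eq:relalfaT} and the vorticity step would have to be rephrased around a weaker $K$. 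To recover $\sqrt\alpha$ you must retain a power of the unknown in the remainder estimate --- e.g.\ bound $\alpha\bigl|\int_{\mathbb{T}^2}(\lapl u^\alpha\cdot\nabla)u\cdot w\,dx\bigr|\le\|\alpha\lapl u^\alpha\|_{L^4}\|\nabla u\|_{L^4}\|w\|_{L^2}$ together with a multiplier bound of the form $\|\alpha\lapl u^\alpha\|_{L^r}\le C\sqrt\alpha\|\nabla v^\alpha\|_{L^r}$ for $2\le r<\infty$ (your quoted smoothing bound is only the $L^2$ case of Lemma \ref{lemma:iftimie}) --- or work with the unsquared quantity as the paper does, where the remainders carry the harmless factor $\|y^\alpha\|_{L^\infty}\le C$ in \eqref{eq:boundterminalfa} and the differential inequality \eqref{eq:odiosgood} is written directly for $z^\alpha=\|y^\alpha\|_{L^2}$. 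This is repairable within your scheme, but as stated it is a concrete gap between your estimate and the theorem's $K(\alpha,t)$.
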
 
One can check that thanks to \eqref{eq:closeenough}, it is always possible to find $\alpha$ small enough such that \eqref{eq:relalfaT} holds for a fixed time $T$ and vice versa a positive time $T$ such that \eqref{eq:relalfaT} holds for $\alpha\in (0, \overline{\alpha}]$.
Finally, assuming additional regularity on the initial datum, it is possible provide an explicit expression for the function $\psi_{\omega_0,p,M}$. 
\begin{corollary}\label{cor:besov} 
Under the same assumptions of \textsc{Theorem \ref{thm:rate}}, let $s \in (0,1)$, if $\omega_0$ belongs to the Besov space $B^s_{p, \infty}(\mathbb{T}^2)$, then the function $\psi_{\omega_0,p,M}(\cdot)$ is controlled, yielding 
\begin{equation} 
\sup_{t \in (0,T)}\|q^\alpha(t) -\omega(t)\|_{L^p}\leq C M^{1-\frac{1}{p}}\max\left\{ K(\alpha,t)^s,K(\alpha,t)^{\frac{\exp{(-CT)}}{2p}} \right\}, \label{eq:ratebesov} 
\end{equation} 
where $C$ depends on $M$.
\end{corollary}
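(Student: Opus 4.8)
The plan is to go back to the proof of Theorem~\ref{thm:rate} and read off the modulus $\psi_{\omega_0,p,M}$ explicitly. In that proof it enters the estimate \eqref{eq:modulcontest} as the quantity controlling the displacement of the initial vorticity when the Lagrangian flow of $u$ is replaced by the (nearly measure-preserving) Lagrangian flow of $u^\alpha$: once the distance between the two Yudovich flows has been bounded by a power of $K(\alpha,t)$, one estimates $\bigl\|\omega_0\circ(X^\alpha_t)^{-1}-\omega_0\circ X_t^{-1}\bigr\|_{L^p}$ by the $L^p$-modulus of continuity of $\omega_0$ evaluated at that distance. In other words, $\psi_{\omega_0,p,M}$ is controlled, up to a constant depending only on $M$ and the fixed parameters, by
\[
\mu_p(\omega_0;r):=\sup_{0<|h|\le r}\bigl\|\omega_0(\cdot+h)-\omega_0\bigr\|_{L^p(\mathbb{T}^2)},
\]
while the second entry $K(\alpha,t)^{\exp(-CT)/(2p)}$ of the maximum in \eqref{eq:modulcontest} comes from the Hölder exponent of the Yudovich flow and is not affected here.

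I would then invoke the classical difference characterization of Besov spaces: for $s\in(0,1)$ and $p\in(1,\infty)$ a function $f\in L^p(\mathbb{T}^2)$ lies in $B^s_{p,\infty}(\mathbb{T}^2)$ if and only if
\[
[f]_{B^s_{p,\infty}}:=\sup_{h\neq0}\frac{\|f(\cdot+h)-f\|_{L^p(\mathbb{T}^2)}}{|h|^{s}}<\infty,
\]
and $\|f\|_{L^p}+[f]_{B^s_{p,\infty}}$ is an equivalent norm on $B^s_{p,\infty}(\mathbb{T}^2)$. Applied to $\omega_0$ this gives $\mu_p(\omega_0;r)\le[\omega_0]_{B^s_{p,\infty}}\,r^{s}$ for every $r>0$, whence $\psi_{\omega_0,p,M}(r)\le C\,r^{s}$ with $C=C\bigl(M,[\omega_0]_{B^s_{p,\infty}}\bigr)$. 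Inserting $r=K(\alpha,t)$ into \eqref{eq:modulcontest}, the first term of the maximum is dominated by $C\,K(\alpha,t)^{s}$, so
\[
\max\Bigl\{\psi_{\omega_0,p,M}(K(\alpha,t)),\,K(\alpha,t)^{\frac{\exp(-CT)}{2p}}\Bigr\}\le C\max\Bigl\{K(\alpha,t)^{s},\,K(\alpha,t)^{\frac{\exp(-CT)}{2p}}\Bigr\},
\]
and absorbing the constant (which, exactly as in \eqref{eq:ratebesov}, we still record only through its dependence on $M=\|\omega_0\|_{L^\infty}$, the Besov seminorm of $\omega_0$ being fixed) produces \eqref{eq:ratebesov}.

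The one step that is not purely formal — and the main obstacle — is the identification carried out in the first paragraph: one must verify that in the proof of Theorem~\ref{thm:rate} the abstract modulus $\psi_{\omega_0,p,M}$ is genuinely bounded by the \emph{translation} modulus $\mu_p(\omega_0;\cdot)$, rather than by a coarser modulus obtained through a regularization/commutator argument (which, after optimizing the regularization scale against the flow displacement, would yield a power of $r$ strictly below $s$ and would not match \eqref{eq:ratebesov}). If the flow displacement driving $\psi$ is controlled in $L^\infty$ — which is available in the Yudovich class — the identification is immediate; otherwise one has to carry out the elementary balancing of the regularization parameter against the displacement using $\mu_p(\omega_0;\varepsilon)\lesssim\varepsilon^{s}$ and check that the exponent obtained is precisely the one in \eqref{eq:ratebesov}. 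Beyond this point the argument is entirely soft.
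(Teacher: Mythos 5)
Your proposal is correct and follows essentially the same route as the paper: the paper's proof consists precisely of quoting the difference characterization of $B^s_{p,\infty}(\mathbb{T}^2)$, deducing $\|\omega_0(\cdot+h)-\omega_0\|_{L^p}\leq C|h|^s$, and substituting $\psi_{\omega_0}(|h|)\simeq|h|^s$ into \eqref{eq:modulcontest}. Your worry in the last paragraph is resolved by Step~2 of the proof of \textsc{Proposition \ref{prop:thm2second}}, where $\psi_{\omega_0}$ is indeed defined as the translation modulus of continuity (in $L^1$, which on the finite-measure torus is controlled by the $L^p$ translation modulus via H\"older), so no regularization argument intervenes.
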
 
\begin{comment} 
A rate of convergence has been previously proven in \cite{lt} for some specific vortex patches. In other words, the authors consider as an initial datum a characteristic function of a set with boundary belonging to $\mathcal{C}^{1, \gamma}$. Their proof is based on the fact that those vortex patches belong to the Besov space $\dot{B}^{\frac{1}{2}}_{2, \infty}$. Consequently, we expect that the proof of {const} on a van4bìsishing viscosity scheme with initial datum in $L^\infty \cap B^s_{p, \infty}$ can be adapted to the $\alpha$-Euler model. However, the result given in this paper is more general. We refer to \textsc{Remark \ref{rem:besov}} for some deeper considerations about this last comment. 
\end{comment} 
\bigskip 

Theorem \ref{thm:rate} is the analogous of the results obtained in \cites{const, ciampa,seiswied} for the vanishing viscosity limit and Corollary \ref{cor:besov} corresponds to \cite{const}, \textsc{Corollary 2}, for the vanishing viscosity limit. In particular, the interest in considering initial vorticity in $B^s_{p,\infty}$ relies on the fact that some classes of vortex patch are in those spaces. Indeed, in \cites{cowu2} it has been proved that if $\chi_\Omega$ is the characteristic function of $\Omega\subset \mathbb{R}^2$ whose boundary $\partial\Omega$ has box-counting dimension $\dim_{F}(\Omega)<2$, then 
\begin{equation*} 
\chi_{\Omega} \in B^{\frac{2-\dim_{F}(\Omega)}{p}}_{p, \infty}(\mathbb{T}^2), \quad \forall p \in [1, \infty).
\end{equation*} 

Finally, we mention that in the case of the $\alpha$-Euler equations, when considering a vortex patch of boundary $\mathcal{C}^{1,\gamma}$, in \cite{lt} the authors obtained a rate of convergence of order $\alpha^\frac{3}{4}$ according to our notation. The proof of \cite{lt} is built upon the observations that the $C^{1, \gamma}$ regularity of the boundary of a vortex patch is preserved in time and the vortex patches under this assumption belong to the space $L^\infty(0,T;{B}^{\frac{1}{2}}_{2,\infty}(\mathbb{T}^2))$.
\subsubsection*{Organization of the paper} In \textsc{Section \ref{sec:prelim}}, we introduce the notations used throughout the paper and we recall some standard results on the Euler equations and the known results on the well-posedness of the $\alpha$-Euler equations. \textsc{Section \ref{sec:wp}} is devoted to the proof of \textsc{Theorem \ref{thm:strongconvvort}}. In particular, the proof is split between \textsc{Proposition \ref{prop1}} where we prove strong convergence of the velocity and \textsc{Proposition \ref{prop2}} where we prove strong convergence of the vorticity. Finally, in \textsc{Section \ref{sec:chemin}} we prove \textsc{Theorem \ref{thm:rate}} and \textsc{Corollary \ref{cor:besov}} where we estimate the rate of convergence under the additional hypothesis of bounded initial datum. 
\paragraph*{Acknowledgments} This research has been partially supported by the {\em SNF Project 212573 FLUTURA – Fluids, Turbulence, Advection}. The first author acknowledges the hospitality of the University of Basel, where a large part of this work was carried out. The work of the third author is partially supported by INdAM-GNAMPA and by the project PRIN 2020 ”Nonlinear evolution PDEs, fluid dynamics and transport equations: theoretical foundations and applications”.

\section{Notations and preliminary results}
\label{sec:prelim}
\subsection{Notations}
Throughout this work, we always consider as a domain the two-dimensional flat torus $\mathbb{T}^2 = \mathbb{R}^2/\mathbb{Z}^2$ and we denote the Lebesgue measure on it by $\mathcal{L}^2$. The distance on the torus is defined as
\begin{displaymath}
\dist(x,y) = \min \{|x-y-k|: k \in \mathbb{Z}^2 \enskip|\enskip |k| <2\}
\end{displaymath}
and it is immediate to check that $\dist(x,y) \leq |x-y|$.\\
We use the standard definitions of functional spaces $L^p= L^p(\mathbb{T}^2)$, $H^s= H^s(\mathbb{T}^2)$ and $W^{s,p}=W^{s,p}(\mathbb{T}^2) $, in which we avoid writing explicitly the dependence on $\mathbb{T}^2$ for the norms. To simplify the notation, we use $L^p_tX_x$ for the Bochner spaces $L^p(0,T;X(\mathbb{T}^2))$. The space $\mathcal{C}^\infty_c([0,T) \times \mathbb{T}^2)$ denotes the space of smooth functions with compact support in time and periodic in space.\\
Finally, we introduce an ad-hoc norm for the solution of the $\alpha$-Euler equations, namely the $\alpha$-norm defined by
\begin{equation}
    \|\cdot\|_\alpha^2:= \|\cdot\|_{L^2}^2+\alpha\| \nabla(\cdot)\|_{L^2}^2.\label{eq:alphanorm}
\end{equation}
In this exposition, we use multiple times a generic constant $C$. This constant does not depend on $\alpha$ unless the dependence is specified and even if the constant $C$ appears more than one time in the same computation, its value may change from one line to the next.
\subsection{The $\alpha$-Euler equations}
For the sake of completeness, we derive the vorticity formulation of \eqref{eq:alphaeuler}. Let us define 
\begin{equation*}
    q^\alpha := \rotore v^\alpha \quad \text{and}\quad \omega^\alpha := \rotore u^\alpha. 
\end{equation*}
We recall that in dimension two the curl is a scalar and we employ the standard identities $\sum_j u^\alpha_j \nabla u^\alpha_j= \nabla |u^\alpha|^2/2$ and $\rotore(u^\alpha\cdot \nabla u^\alpha)= u^\alpha \cdot \nabla\omega^\alpha$. Taking the $\rotore$ of \eqref{eq:alphaeuler}, we get
\begin{equation}
    \partial_t q^\alpha+u^\alpha \cdot \nabla\omega^\alpha- \alpha \rotore (u^\alpha\cdot \nabla   (\lapl u^\alpha))- \alpha \rotore \left( \sum_j \lapl u^\alpha_j \nabla u^\alpha_j\right) = 0.\label{eq:contivort2}
\end{equation}
We compute the two curls and obtain
\begin{multline}
	\rotore (u^\alpha\cdot \nabla ( \lapl u^\alpha))= \partial_2 u_1^\alpha \partial_1 (\lapl u_1^\alpha) +u_1^\alpha \partial_2 \partial_1 (\lapl u_1^\alpha)+\partial_2 u_2^\alpha \partial _2 (\lapl u_1^\alpha) +u_2^\alpha \partial_2 ^2 (\lapl u_1^\alpha)\\
	-\partial_1 u_1^\alpha \partial_1 (\lapl u_2^\alpha)- u_1^\alpha \partial_1 ^2 (\lapl u_2^\alpha) -\partial_1 u_2^\alpha \partial_2 (\lapl u_2^\alpha) -u_2^\alpha \partial_1 \partial_2 (\lapl u_2^\alpha)\label{eq:contivort3}
\end{multline}
and
\begin{multline}
	\rotore\left(\sum_j \lapl u^\alpha_j \nabla u^\alpha_j\right)= \partial_2 (\lapl u_1^\alpha) \partial_1 u_1^\alpha+  \lapl u_1^\alpha \partial_2\partial_1 u_1^\alpha+\lapl u_2^\alpha\partial_2  \partial_1 u_2^\alpha+ \lapl u_2^\alpha \partial_2\partial_1 u_2^\alpha\\
	- \partial_1 (\lapl u_1^\alpha) \partial_2 u_1^\alpha-  \lapl u_1^\alpha \partial_1\partial_2 u_1^\alpha- \partial_1 (\lapl u_2^\alpha) \partial_2 u_2^\alpha- \lapl u_2^\alpha \partial_1\partial_2 u_2^\alpha. \label{eq:contivort4}
\end{multline}
Therefore, by \eqref{eq:contivort3}-\eqref{eq:contivort4}, simplifying the opposite terms, identity \eqref{eq:contivort2} becomes
\begin{equation*}
	\partial_t q^\alpha+u^\alpha \cdot \nabla\omega^\alpha- \alpha(u^\alpha \cdot \nabla)\lapl\omega^\alpha-\diver u^\alpha \rotore(\lapl u^\alpha) = 0.
\end{equation*}
Employing the incompressibility constraint, we obtain that $q^\alpha = \rotore v^\alpha$ satisfies
\begin{flalign}
\begin{cases}
\partial_t q^\alpha +u^\alpha\cdot \nabla q^\alpha =0,&\quad \text{on}  \quad(0,T) \times \mathbb{T}^2 \\
q(0, \cdot) = q_0^\alpha,&\quad \text{on} \quad\mathbb{T}^2,
\end{cases}\label{eq:vortalphaeuler}
\end{flalign}
where $v^\alpha$ is related to the vorticity $q^\alpha$ as
\begin{flalign}
\begin{cases}
\diver v^\alpha=0,&\quad \text{on}  \quad(0,T) \times \mathbb{T}^2 \\
\rotore v^\alpha= q^\alpha,&\quad \text{on}  \quad(0,T) \times \mathbb{T}^2 .
\end{cases}\label{eq:vortalphaeuleraux}
\end{flalign}
The system \eqref{eq:vortalphaeuleraux} yields the existence of a stream function $\psi^\alpha: [0,T) \times \mathbb{T}^2 \rightarrow \mathbb{R}$ such that $v^\alpha = \nabla^ \perp \psi^\alpha$ and
$ q^\alpha =-\lapl \psi^\alpha $ on $[0,T) \times \mathbb{T}^2$. The solution of this Poisson equation is given in terms of the Green function on the torus, under the condition $\int_{\mathbb{T}^2}q^\alpha dx = 0$, which is preserved in time by the equation at least formally. The Green function on the torus reads
\begin{displaymath}
G_{\mathbb{T}^2}(x,y) = \sum_{k \in \mathbb{Z}^2, k \neq 0}- \frac{\log|x-y-2 \pi k|}{2 \pi}
\end{displaymath}
and the corresponding Biot-Savart kernel, which can be used to represent the solution, reads
\begin{equation}
v^\alpha = \int_{\mathbb{T}^2} \nabla^\perp G_{\mathbb{T}^2}(x,y)q^\alpha(x) dx= k \ast q^\alpha. \label{eq:biotsavart}
\end{equation}
This relation implies that the Calderon-Zygmund estimates hold, see \cite{marchioro}, namely 
\begin{equation}
      \|\nabla v^\alpha \|_{L^p}\leq C_p\| q^\alpha\|_{L^p}, \quad \forall p \in (1,\infty),\label{eq:calderonzyg}
\end{equation}
where $C_p \sim p$ for $p > 2$.\\
\begin{comment}
We give in a standard way the definition of weak solutions for the system \eqref{eq:alphaeulervort}.
\begin{definition}[Distributional solution to the $\alpha$-Euler equation]
A distributional solution of the $\alpha$-Euler equation in vorticity form is a couple $(u^\alpha, q^\alpha)$ such that
\begin{equation}
     \int_0^T \int_{\mathbb{T}^2}q^\alpha \partial_t \varphi+ q^\alpha u^\alpha \nabla \varphi ds dx + \int_{\mathbb{T}^2} q^\alpha_0\varphi(0,x)  dx=0,\quad \forall \varphi \in \mathcal{C}^\infty_c([0,T) \times \mathbb{T}^2)\label{eq:distrivort}
\end{equation}
 and it enjoys \eqref{eq:biotsavart} for almost every $(t,x) \in (0,T) \times \mathbb{T}^2$, where $v^\alpha := (\idnt-\alpha\lapl)u^\alpha$.\label{def:weakalphaeu}
\end{definition}
Here, we remind that \eqref{eq:biotsavart} implies the incompressibility constraints for both $u^\alpha$ and $v^\alpha$.
We do not require specific functional space in the definition, but the couple $(u^\alpha, q^\alpha)$ should be so that all the terms appearing in \eqref{eq:distrivort} are integrable.\\
\end{comment}
We state the well-posedeness of the $\alpha$-Euler equations whose proof can be adapted from \cites{bi, bill} to the two-dimensional torus.
\begin{thm}[Well-posedeness of $\alpha$-Euler equations]\label{thm:wpalpha}
	Let $ q^\alpha_0 \in L^p(\mathbb{T}^2)$ with $p \in (1,\infty)$ and $\int_{\mathbb{T}^2}q^\alpha_0=0$. Then $u^\alpha_0 = (\identity-\alpha\lapl)^{-1}k \ast q^\alpha_0 \in W^{3,p}(\mathbb{T}^2)$ and there exists a unique solution $u^\alpha \in L^\infty(0,T;W^{3,p}(\mathbb{T}^2))$ of \eqref{eq:alphaeuler}. Moreover, the solution $u^\alpha$ conserves the $\alpha$-norm, namely
	\begin{equation}
		\|u^\alpha(t)\|_{\alpha}= \|u^\alpha_0\|_\alpha, \qquad \forall 0 \leq t \leq T.\label{eq:consalphanorm}
	\end{equation}
\end{thm}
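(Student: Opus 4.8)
The plan is to work with the vorticity formulation \eqref{eq:vortalphaeuler}--\eqref{eq:vortalphaeuleraux} and to run a Lagrangian Picard iteration, using that an $L^p$ vorticity already produces a (globally) Lipschitz velocity for the $\alpha$-Euler system. First I would settle the statement on the datum: by \eqref{eq:biotsavart} the field $v^\alpha_0=k\ast q^\alpha_0$ has zero mean, so the Calder\'on--Zygmund bound \eqref{eq:calderonzyg} together with the Poincar\'e inequality gives $v^\alpha_0\in W^{1,p}(\mathbb{T}^2)$; since $(\identity-\alpha\lapl)^{-1}$ is a Fourier multiplier gaining two derivatives on $L^p$, $u^\alpha_0=(\identity-\alpha\lapl)^{-1}v^\alpha_0\in W^{3,p}(\mathbb{T}^2)$, and $W^{3,p}(\mathbb{T}^2)\hookrightarrow C^{1,\beta}(\mathbb{T}^2)$ for every $p\in(1,\infty)$. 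The same three estimates supply the uniform a priori bounds: whenever $q$ is transported by a divergence-free field $u$ one has $\|q(t)\|_{L^p}=\|q^\alpha_0\|_{L^p}$ for all $t$, because the flow is measure preserving, hence $\|v(t)\|_{W^{1,p}}\leq C_p\|q^\alpha_0\|_{L^p}$ and $\|u(t)\|_{W^{3,p}}+\|\nabla u(t)\|_{L^\infty}\leq C(\alpha,p)\|q^\alpha_0\|_{L^p}$, uniformly in $t\in[0,T]$; in particular no smallness of $T$ is needed.

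For existence I would set $u^{(0)}\equiv u^\alpha_0$ and, inductively, let $X^{(n)}$ be the unique (Cauchy--Lipschitz) flow of $u^{(n)}$, put $q^{(n+1)}(t,\cdot)=q^\alpha_0\circ(X^{(n)}_t)^{-1}$, and recover $v^{(n+1)}=k\ast q^{(n+1)}$, $u^{(n+1)}=(\identity-\alpha\lapl)^{-1}v^{(n+1)}$. By the previous step the sequence is bounded in $L^\infty(0,T;W^{3,p})$ with uniformly Lipschitz velocities. To pass to the limit I would estimate consecutive differences: Gr\"onwall applied to $\dot X^{(n)}-\dot X^{(n-1)}=u^{(n)}(X^{(n)})-u^{(n-1)}(X^{(n-1)})$ gives $\|X^{(n)}_t-X^{(n-1)}_t\|_{L^\infty}\lesssim\int_0^t\|u^{(n)}-u^{(n-1)}\|_{L^\infty}\,ds$; since transporting $q^\alpha_0\in L^p$ by measure-preserving maps is $\|q^\alpha_0\|_{L^p}$-Lipschitz into $W^{-1,p}$, and $(\identity-\alpha\lapl)^{-1}k\ast(\cdot)$ maps $W^{-1,p}$ into $W^{2,p}\hookrightarrow L^\infty$, one gets a Volterra-type inequality $\|u^{(n+1)}-u^{(n)}\|_{C([0,t];L^\infty)}\lesssim\int_0^t\|u^{(n)}-u^{(n-1)}\|_{C([0,s];L^\infty)}\,ds$, which is summable. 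Hence $u^{(n)}\to u^\alpha$ in $C([0,T];L^\infty)$, the flows converge, $q^{(n)}\to q^\alpha=q^\alpha_0\circ(X_t)^{-1}$, and passing to the limit in the weak form of \eqref{eq:alphaeuler} shows $(u^\alpha,q^\alpha)$ is a solution with $u^\alpha\in L^\infty(0,T;W^{3,p})$. (Equivalently, the uniform bounds plus Aubin--Lions give a convergent subsequence and one passes to the limit directly; or one regularizes $q^\alpha_0$, uses classical theory, and passes to the limit.) Uniqueness follows from the same Gr\"onwall computation applied to two solutions with the same datum: both velocities being Lipschitz, the difference of their flows satisfies a homogeneous Gr\"onwall inequality, hence vanishes, forcing the vorticities and then the velocities to coincide.

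For the conservation of the $\alpha$-norm \eqref{eq:consalphanorm} I would test the momentum equation in \eqref{eq:alphaeuler} with $u^\alpha$. From the equation $\partial_t v^\alpha=-u^\alpha\cdot\nabla v^\alpha-\sum_j v^\alpha_j\nabla u^\alpha_j-\nabla p\in L^\infty(0,T;L^p)$, so $t\mapsto\int_{\mathbb{T}^2}v^\alpha\cdot u^\alpha=\|u^\alpha(t)\|_\alpha^2$ is absolutely continuous and, integrating the Laplacian by parts, $\tfrac12\tfrac{d}{dt}\|u^\alpha\|_\alpha^2=\int_{\mathbb{T}^2}\partial_t v^\alpha\cdot u^\alpha$. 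The pressure term drops because $\diver u^\alpha=0$; for the two nonlinear contributions, one integration by parts in the first together with $\diver u^\alpha=0$ gives $\int(u^\alpha\cdot\nabla v^\alpha)\cdot u^\alpha=-\sum_{i,j}\int v^\alpha_i u^\alpha_j\partial_j u^\alpha_i$, which cancels exactly $\int\big(\sum_j v^\alpha_j\nabla u^\alpha_j\big)\cdot u^\alpha=\sum_{i,j}\int v^\alpha_i u^\alpha_j\partial_j u^\alpha_i$ (relabel $i\leftrightarrow j$). Since $v^\alpha\in W^{1,p}$ while $u^\alpha,\nabla u^\alpha\in L^\infty$, every integrand is a product of an $L^\infty$ factor and an $L^p$ factor, so all integrations by parts are legitimate without mollification, and $\tfrac{d}{dt}\|u^\alpha\|_\alpha^2=0$.

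The main obstacle is the convergence step: one has to choose the function spaces so that the two-derivative smoothing of $(\identity-\alpha\lapl)^{-1}$ composed with the one-derivative smoothing of the Biot--Savart kernel compensates the single derivative lost in the transport equation, which forces one to measure vorticity differences in the negative norm $W^{-1,p}$ while flow differences live in $L^\infty$, and to track the ($\alpha$-dependent but, $\alpha$ being fixed, harmless) constants coming from the Helmholtz filter. Everything else reduces to standard elliptic and transport-equation estimates, or to the purely algebraic cancellation in the energy identity above.
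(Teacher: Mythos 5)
Your proposal is correct, but it is worth pointing out that the paper does not actually prove the existence and uniqueness part of this theorem: it only states that the proof ``can be adapted from the references it points to, to the two-dimensional torus'', and then sketches the formal derivation of the conservation of the $\alpha$-norm. For that last part your argument coincides with the paper's: test the momentum equation against $u^\alpha$, kill the pressure by incompressibility, and cancel the two nonlinear contributions by one integration by parts and a relabelling of the indices $i\leftrightarrow j$; your added observation that every integrand is a product of an $L^p$ factor ($v^\alpha\in W^{1,p}$) with $L^\infty$ factors ($u^\alpha\in W^{3,p}\hookrightarrow C^{1,\beta}$ for all $p\in(1,\infty)$ in two dimensions) is precisely what upgrades the paper's formal computation to a rigorous one. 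For the existence and uniqueness part, your Lagrangian Picard iteration in the vorticity formulation is a legitimate self-contained route, and it rests on the correct structural point: the Helmholtz filter composed with the Biot--Savart law gains three derivatives, so an $L^p$ vorticity already produces a Lipschitz velocity and the characteristics are classical, with global-in-time bounds because $\|q(t)\|_{L^p}$ is conserved along measure-preserving flows. Measuring vorticity differences in $W^{-1,p}$ against flow differences in $L^\infty$ (via the maximal-function inequality \eqref{eq:maxine} applied to the test function, together with the $L^{p'}$-boundedness of the maximal operator and the measure-preserving property) does close the Volterra estimate, with constants that depend on $\alpha$ but are harmless since $\alpha$ is fixed. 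The only step worth making explicit is the passage from the vorticity formulation back to the velocity formulation \eqref{eq:alphaeuler}, i.e.\ the recovery of the pressure from the elliptic equation obtained by taking the divergence of the momentum equation; this is standard and is also left implicit in the paper.
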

The conservation of the $\alpha$-norm can be shown formally by considering \eqref{eq:alphaeuler} and testing it against $u^\alpha$, integrating over the torus, which yields
	\begin{multline*}
		\int_{\mathbb{T}^2}\partial_t (u^\alpha-\alpha\lapl u^\alpha)\cdot u^\alpha dx +\int_{\mathbb{T}^2}u^\alpha\cdot \nabla (u^\alpha - \alpha \lapl u^\alpha)\cdot u^\alpha dx \\+\int_{\mathbb{T}^2}\sum_j (u^\alpha-\alpha\lapl u^\alpha)_j \nabla u^\alpha_j \cdot u^\alpha dx  = -\int_{\mathbb{T}^2}\nabla p\cdot u^\alpha dx .
	\end{multline*}
	Exploiting $\diver u^\alpha=0$ and integrating by part, removing the trivially zero terms, we obtain
	\begin{equation*}
		\totdertime \frac{\|u^\alpha\|_\alpha^2}{2}= \int_{\mathbb{T}^2} \sum_{j,i=1}^2\partial_i(\lapl u^\alpha_j)u^\alpha_i u^\alpha_jdx -\int_{\mathbb{T}^2} \sum_{j,i=1}^2u^\alpha_j\partial_j (\lapl u^\alpha_i) u^\alpha_idx= 0
	\end{equation*}
	where the two terms simplified by swapping the indices in the sum.

\subsection{The two-dimensional Euler equations}
Let $T>0$ be arbitrary and finite, the two-dimensional Euler equations on the torus are
\begin{flalign}
	\begin{cases}
		\partial_t u +u\cdot \nabla u = -\nabla p, &\quad \text{on}  \quad(0,T) \times \mathbb{T}^2 \\
		\diver u = 0,&\quad \text{on}\quad (0,T) \times \mathbb{T}^2 \\
		u(0, \cdot) = u_0 ,&\quad \text{on} \quad\mathbb{T}^2,
	\end{cases}\label{eq:stdeuler}
\end{flalign} which in vorticity formulation reads
\begin{flalign}
	\begin{cases}
		\partial_t \omega  +u\cdot \nabla \omega =0,&\quad \text{on}  \quad(0,T) \times \mathbb{T}^2 \\
		\omega(0, \cdot) = \omega_0,&\quad \text{on} \quad\mathbb{T}^2,
	\end{cases}\label{eq:vorteuler}
\end{flalign}
where 
\begin{flalign}
	\begin{cases}
		\diver u=0,&\quad \text{on}  \quad(0,T) \times \mathbb{T}^2 \\
	u = k \ast \omega, &\quad \text{on}  \quad(0,T) \times \mathbb{T}^2.
	\end{cases}\label{eq:vorteuleraux}
\end{flalign}
\begin{comment}
We say that $u: [0,T] \times \mathbb{T}^2 \to \mathbb{R}^2$ is a distributional solution of \eqref{eq:stdeuler} if
\begin{equation*}
	\int_0^T\int_{\mathbb{T}^2}u\partial_t \varphi  +u \otimes u : \nabla \varphi dx ds +\int_{\mathbb{T}^2} u_0\varphi(0,x)dx =0, \quad \forall \varphi \in \mathcal{C}^\infty_c([0,T)\times \mathbb{T}^2) \enskip \text{s.t.} \enskip \diver \varphi =0
\end{equation*}
and $\int_{\mathbb{T}^2}u(t, \cdot)\cdot \nabla \xi dx =0$ for every $\xi \in \mathcal{C}^\infty(\mathbb{T}^2)$ and almost every $t \in [0,T]$.\\
We recall in this paragraph some classical definition for the solutions of the Euler equations.
\begin{definition}[Distributional solution to the Euler equations]\label{def:weakeuler}
A distributional solution of the Euler equations in vorticity form is a couple $(u, \omega) \in L^\infty (0,T;W^{1,p}(\mathbb{T}^2)) \times L^\infty (0,T;L^p(\mathbb{T}^2))$ with $p \geq \frac{4}{3}$ such that
\begin{equation}
     \int_0^T \int_{\mathbb{T}^2}\omega \partial_t \varphi+ \omega  u \cdot\nabla \varphi ds dx + \int_{\mathbb{T}^2} \omega_0\varphi(0,x)  dx=0,\quad \forall \varphi \in \mathcal{C}^\infty_c([0,T) \times \mathbb{T}^2)\label{eq:distrivorteu}
\end{equation}
and it enjoys 
\begin{equation}
    u(t,x)= (k_{\mathbb{T}^2} \ast \omega)(t,x) \quad \text{a.e. in} \quad  (0,T) \times \mathbb{T}^2.\label{eq:biotsavarteu}
\end{equation}
\end{definition}
\end{comment}
We introduce the Lagrangian description of \eqref{eq:vorteuler}. Let $X:[0,T) \times [0,T) \times \mathbb{T}^2 \rightarrow\mathbb{T}^2$ be such that
\begin{flalign}
\begin{cases}
\dot{X}_{t,s}(x) = u(s,X_{t,s}(x)), \quad \forall s \in [0,T], \quad \forall x \in \mathbb{T}^2\\
X_{t,t}(x)=x, \quad \forall x \in \mathbb{T}^2,\label{eq:flowode}
\end{cases}
\end{flalign}
for any given $t \in [0,T]$. By the theory of characteristics, if $u$ is smooth, we know that the unique solution of the two-dimensional Euler equations with initial datum $\omega_0$ satisfies
\begin{equation}
    u(t,x) := (k \ast \omega)(t,x), \quad \text{and} \quad \omega(t,x) := \omega_0(X_{t,0}(x)).\label{eq:lagsol}
\end{equation}
In order to extend the definition to the non-smooth case, we need to introduce the following definition.
\begin{definition}[Regular Lagrangian flow]
A map $X \in L^\infty((0,T) \times(0,T) \times \mathbb{T}^2)$ is called a Lagrangian flow for the vector field $u \in L^1(0,T; L^1(\mathbb{T}^2))$ if
\begin{itemize}
    \item the map $s \mapsto X_{t,s}(x)$ is an absolutely continuous solution of \eqref{eq:flowode} for almost every $x \in \mathbb{T}^2$ and any $t \in [0,T)$;
    \item the map $x \mapsto X_{t,s}(x)$ is measure preserving with respect to the Lebesgue measure on the torus for any $s,t \in [0,T)$.
\end{itemize}\label{def:lagflow}
\end{definition}
The definition of Lagrangian solution in the non-smooth setting is the following.
\begin{definition}[Lagrangian solution to the Euler equations]
Let $p \in (1, \infty)$ and $\omega_0 \in L^p(\mathbb{T}^2)$. The couple $(u, \omega) \in L^\infty_t W^{1,p}_x \times L^\infty_t L^p_x $ is called Lagrangian solution to the two-dimensional Euler equations if there exists a regular Lagrangian flow in the sense of \textsc{Definition \ref{def:lagflow}} and the couple $(u,\omega)$ satisfies \eqref{eq:lagsol} for almost every $(t,x) \in (0,T) \times \mathbb{T}^2$.\label{def:lagsol}
\end{definition}
\section{Quantitative strong convergence of the vorticity}
\label{sec:wp}
In this section, we present the proof of \textsc{Theorem \ref{thm:strongconvvort}} which is split between \textsc{Proposition \ref{prop1}} and \textsc{Proposition \ref{prop2}}. We recall and adapt some lemmas introduced in \cite{bi}. In the first proposition, we prove the convergence in velocity and we show it implies conservation of energy. In the second proposition, we introduce the proof of convergence in vorticity through Lagrangian techniques.
\subsection{Preliminaries}
In this paragraph, we show some bounds which are needed to prove the convergences in \textsc{Theorem \ref{thm:strongconvvort}}.
We know that the solution $u^\alpha$ is regular enough so that the $L^p$ norms of $q^\alpha$ are preserved in time, thanks to \textsc{Theorem \ref{thm:wpalpha}}. Hence, using standard elliptic estimate and the Calderon-Zygmund inequality \eqref{eq:calderonzyg}, we get 
\begin{equation}
\|u^\alpha\|_{W^{1,p}} \leq C\|v^\alpha\|_{W^{1,p}} \leq C \| q^\alpha\|_{L^p}=C\| q^\alpha_0\|_{L^p}, \quad \forall p \in (1, \infty). \label{eq:w1pbound}
\end{equation}
Moreover, the elliptic equation $v^\alpha=u^\alpha -\alpha \lapl u^\alpha$ yields the additional regularity
\begin{equation}
      \|u^\alpha\|_{W^{3,p}} \leq  C\alpha^{-1}\|v^\alpha\|_{W^{1,p}} \leq C\alpha^{-1} \| q^\alpha\|_{L^p}=C\alpha^{-1}\| q^\alpha_0\|_{L^p}, \quad \forall p \in (1, \infty). \label{eq:w3pbound}
\end{equation}
This elliptic estimate can be adapted on the torus from \cite{gilbargtru}, \textsc{Theorem 8.10}. The inequality \eqref{eq:w3pbound} is not uniform with respect to $\alpha$; nevertheless, it is used in \textsc{Lemma \ref{lemma:iftimie}} to produce an improvement of \eqref{eq:w1pbound} to control the $L^2$ norm of the gradient and the Laplacian of $u^\alpha$, even for $p<2$. The proof is an adaptation of \textsc{Proposition 3.1} in \cite{bi}. This bound refines the proof for the convergence in velocities with respect to \cite{lt}. Moreover, the energy conservation for the limit solution is proven as a direct consequence of this estimate.
\begin{lemma}\label{lemma:iftimie}
Let $q^\alpha \in L^p(\mathbb{T}^2)$ and let $u^\alpha:= (\identity-\alpha\lapl)^{-1}k\ast q^\alpha$. Then, for every $t \in [0,T)$, it holds
\begin{align*}
   \|\nabla u^\alpha(t)\|_{L^2}& \leq C \alpha^{\frac{1}{2}-\frac{1}{p}}\| q^\alpha(t)\|_{L^p}, \qquad \forall p \in(1,2],\\
   \|\lapl u^\alpha(t)\|_{L^2}& \leq C \alpha^{-\frac{1}{p}}\| q^\alpha(t)\|_{L^p}, \qquad \forall p \in(1,2],\\
   \|\lapl u^\alpha(t)\|_{L^2}& \leq C \alpha^{-\frac{1}{2}}\| q^\alpha(t)\|_{L^p}, \qquad \forall p \geq 2.
\end{align*}
\end{lemma}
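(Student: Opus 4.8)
The strategy is to work in Fourier variables, where the Helmholtz filter is diagonal, and reduce every statement to a single estimate on Fourier multipliers combined with the Calderón–Zygmund bound \eqref{eq:calderonzyg}. Writing $v^\alpha = k \ast q^\alpha$, so that $\|\nabla v^\alpha\|_{L^2} \leq C_p \|q^\alpha\|_{L^p}$ for $p \in (1,2]$ by \eqref{eq:calderonzyg} (recall $L^2 \hookrightarrow L^p$ is the wrong direction, so one genuinely needs the $L^p$-Calderón–Zygmund estimate here, not an $L^2$ one), I would first record that $\widehat{u^\alpha}(k) = (1+\alpha|k|^2)^{-1}\widehat{v^\alpha}(k)$. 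Hence for any exponent $\sigma \geq 0$,
\begin{equation*}
\||\nabla|^\sigma u^\alpha\|_{L^2}^2 = \sum_{k \neq 0} \frac{|k|^{2\sigma}}{(1+\alpha|k|^2)^2}|\widehat{v^\alpha}(k)|^2 = \sum_{k\neq 0} m_{\sigma,\alpha}(k)^2\,|k|^2|\widehat{v^\alpha}(k)|^2,
\end{equation*}
where $m_{\sigma,\alpha}(k) := |k|^{\sigma-1}(1+\alpha|k|^2)^{-1}$. The whole proof then rests on bounding $\sup_{k\neq 0} m_{\sigma,\alpha}(k)$ by an appropriate power of $\alpha$.

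For the Laplacian bounds ($\sigma = 2$) this is elementary: $m_{2,\alpha}(k) = |k|(1+\alpha|k|^2)^{-1}$, and optimizing over the real variable $r = |k| \geq 1$ one finds $\sup_r r(1+\alpha r^2)^{-1} \leq C\alpha^{-1/2}$, attained near $r \sim \alpha^{-1/2}$. This immediately gives $\|\lapl u^\alpha\|_{L^2} \leq C\alpha^{-1/2}\|\nabla v^\alpha\|_{L^2} \leq C\alpha^{-1/2}\|q^\alpha\|_{L^p}$ for all $p \in (1,\infty)$, which covers the third inequality (for $p \geq 2$, also using $\|q^\alpha\|_{L^p} \lesssim \|q^\alpha\|_{L^2} \lesssim \ldots$ — actually one simply uses \eqref{eq:calderonzyg} directly at the given $p$). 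For the second inequality one wants to trade the loss $\alpha^{-1/2}$ for $\alpha^{-1/p}$ when $p \leq 2$; this is where one cannot afford to throw away the decay of the multiplier so crudely. Instead of bounding $m_{2,\alpha}$ in $L^\infty$, I would interpolate: split the multiplier estimate using Hölder in $k$ against the Hausdorff–Young / Calderón–Zygmund information at $L^p$, or more cleanly, bound $\|\lapl u^\alpha\|_{L^2}$ by interpolating between $\|\lapl u^\alpha\|_{L^2} \lesssim \alpha^{-1}\|v^\alpha\|_{W^{1,p}}$ (the non-uniform bound \eqref{eq:w3pbound}, essentially $\sup_k |k|(1+\alpha|k|^2)^{-1}|k|^{0}$ against an $L^p$ datum via Sobolev) and the $W^{1,p}\hookrightarrow$ Calderón–Zygmund bound — concretely, writing $|k|^2(1+\alpha|k|^2)^{-2} = \big(|k|(1+\alpha|k|^2)^{-1}\big)^{2/p'}\cdot\big(|k|^{2-2/p'}(1+\alpha|k|^2)^{-2+2/p'}\big)$ does not quite close, so the right move is a Gagliardo–Nirenberg-type interpolation $\|\lapl u^\alpha\|_{L^2} \leq \|\nabla u^\alpha\|_{L^2}^\theta\||\nabla|^{s}u^\alpha\|_{L^2}^{1-\theta}$ for a suitable $s$, feeding in \eqref{eq:w1pbound}/\eqref{eq:w3pbound} converted to $L^2$-based norms via the filter. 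The cleanest honest route, following the structure of \cite[Proposition 3.1]{bi}, is: control the low frequencies $|k| \leq R$ using the $L^p$ bound on $q^\alpha$ and Hölder (losing $R^{2/p-1}$ on the number of modes), control the high frequencies $|k| > R$ using the extra $\alpha|k|^2$ decay in the denominator, and optimize $R \sim \alpha^{-1/2}$.

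The first inequality, $\|\nabla u^\alpha\|_{L^2} \leq C\alpha^{1/2-1/p}\|q^\alpha\|_{L^p}$ for $p \in (1,2]$, is the heart of the matter and the one I expect to be the main obstacle, because it asserts a genuine \emph{gain} of a positive power $\alpha^{1/2-1/p}\to 0$ when $p > 1$, i.e. the filter makes $\nabla u^\alpha$ small in $L^2$ even though $q^\alpha$ is only in $L^p$ with $p$ possibly below $2$ (where $\nabla v^\alpha$ need not be in $L^2$ at all). Here $\sigma = 1$ and $m_{1,\alpha}(k) = (1+\alpha|k|^2)^{-1} \leq 1$, which only recovers the non-uniform-in-$p$ statement $\|\nabla u^\alpha\|_{L^2}\lesssim \|\nabla v^\alpha\|_{L^2}$ and is useless when $p < 2$. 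The correct argument must use the $L^p$-boundedness of the Riesz-type operator $k\mapsto |k|^{-1}$ applied to $q^\alpha$ together with the smoothing: one writes, for $1/p = 1/2 + 1/r$ with $r$ determined by $p$, a convolution estimate $\|\nabla u^\alpha\|_{L^2} \leq \|K_\alpha\|_{L^r}\|q^\alpha\|_{L^p}$ by Young's inequality, where $K_\alpha$ is the kernel of the operator with multiplier $|k|^{-1}(1+\alpha|k|^2)^{-1}$ (i.e. $\nabla u^\alpha = K_\alpha \ast q^\alpha$ up to a bounded factor); then one checks by scaling that $\|K_\alpha\|_{L^r} \sim \alpha^{\beta}$ with $\beta = \tfrac12 - \tfrac1p$ — indeed $K_\alpha(x) = \alpha^{-?}K_1(x/\sqrt\alpha)$-type scaling (the operator has "order $-2$" from $(1-\alpha\Delta)^{-1}$ minus "order $1$" from Biot–Savart, net homogeneity matching $\sqrt\alpha$ as the length scale), so $\|K_\alpha\|_{L^r}^r = \alpha^{-1}\cdot\alpha^{r\cdot(\text{order}/2+1)}\|K_1\|_{L^r}^r$, and plugging $1/r = 1/p - 1/2$ gives exactly the exponent $\alpha^{1/2-1/p}$, provided $K_1 \in L^r$, which one verifies from the pointwise behaviour $K_1(x) \sim |x|^{-1}$ near the origin (integrable in $\mathbb{R}^2$ against $r < 2$, i.e. $p>1$ — matching the hypothesis) and exponential decay at infinity from the Bessel-potential part. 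Care is needed on the torus rather than $\mathbb{R}^2$: I would either transfer the $\mathbb{R}^2$ estimate via periodization (legitimate since the relevant length scale $\sqrt\alpha \to 0$ is much smaller than the period for small $\alpha$), or estimate the multiplier sum directly by comparison with the integral. With the three multiplier/kernel estimates in hand, the three displayed inequalities follow by combining with \eqref{eq:calderonzyg}; I would present the Fourier/kernel scaling lemma first and then deduce all three bounds as corollaries.
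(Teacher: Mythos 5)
Your plan correctly isolates the real content of the lemma (the gain $\alpha^{1/2-1/p}$ for $p\le 2$) and your treatment of the third inequality is fine (the $L^\infty$ multiplier bound $\sup_k |k|(1+\alpha|k|^2)^{-1}\le C\alpha^{-1/2}$ plus Calder\'on--Zygmund at $p\ge 2$ is equivalent to the paper's argument, which tests $v^\alpha=u^\alpha-\alpha\lapl u^\alpha$ against $-\lapl u^\alpha$). But the two estimates for $p\in(1,2]$ are not actually proved. First, your opening inequality $\|\nabla v^\alpha\|_{L^2}\le C_p\|q^\alpha\|_{L^p}$ for $p\in(1,2]$ is false: Calder\'on--Zygmund gives only $\|\nabla v^\alpha\|_{L^p}\le C_p\|q^\alpha\|_{L^p}$, and $L^p\not\hookrightarrow L^2$ for $p<2$; your own parenthetical notes the embedding goes the wrong way, yet the displayed inequality survives. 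Second, for $\|\lapl u^\alpha\|_{L^2}\le C\alpha^{-1/p}\|q^\alpha\|_{L^p}$ you concede your multiplier splitting ``does not quite close'' and then gesture at a low/high frequency decomposition without executing it; the low frequencies do work out with Bernstein and $R\sim\alpha^{-1/2}$, but on the high frequencies ``using the extra $\alpha|k|^2$ decay'' against $\nabla v^\alpha$ is vacuous precisely because $\nabla v^\alpha\notin L^2$ there — one needs a quantitative high-frequency bound such as $\|P_{>R}v^\alpha\|_{L^2}\lesssim R^{-(2-2/p)}\|q^\alpha\|_{L^p}$ via $W^{1,p}\hookrightarrow H^{2-2/p}$, which you never supply. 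Third, for the key first inequality your kernel-scaling computation does not verify as written: the operator $q^\alpha\mapsto\nabla u^\alpha$ has symbol of magnitude $(1+\alpha|k|^2)^{-1}$ (a zero-order Calder\'on--Zygmund factor times the Helmholtz symbol), not $|k|^{-1}(1+\alpha|k|^2)^{-1}$; the Young relation for a target in $L^2$ is $1/r=3/2-1/p$, not $1/p=1/2+1/r$; and the scaling exponent is left as a literal ``$?$''. The Young-inequality route does work — with $G_\alpha=\alpha^{-1}G_1(\cdot/\sqrt{\alpha})$ the kernel of $(\idnt-\alpha\lapl)^{-1}$ (log singularity, exponential decay) one gets $\|G_\alpha\|_{L^r}=\alpha^{1/r-1}\|G_1\|_{L^r}=\alpha^{1/2-1/p}\|G_1\|_{L^r}$ — but the bookkeeping you present does not produce this exponent.

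For comparison, the paper's proof is much shorter and uses none of this harmonic analysis: for $p\in(1,2]$ it applies the Gagliardo--Nirenberg inequalities $\|\nabla u^\alpha\|_{L^2}\le C\|\differ^3u^\alpha\|_{L^p}^{1/p-1/2}\|\nabla u^\alpha\|_{L^p}^{3/2-1/p}$ and $\|\lapl u^\alpha\|_{L^2}\le C\|\differ^3u^\alpha\|_{L^p}^{1/p}\|\nabla u^\alpha\|_{L^p}^{1-1/p}$ and feeds in the elliptic bounds \eqref{eq:w1pbound} and \eqref{eq:w3pbound}, which hand over the factors $\alpha^{1/2-1/p}$ and $\alpha^{-1/p}$ immediately; for $p\ge 2$ it uses the energy identity $\|\nabla u^\alpha\|_{L^2}^2+\alpha\|\lapl u^\alpha\|_{L^2}^2=(\nabla v^\alpha,\nabla u^\alpha)_{L^2}$. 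You actually name this interpolation route in passing (``feeding in \eqref{eq:w1pbound}/\eqref{eq:w3pbound}'') before abandoning it; carrying it out would have closed the argument in a few lines.
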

\begin{proof}
For $p \in (1,2]$, we interpolate \eqref{eq:w1pbound}-\eqref{eq:w3pbound} using Gagliardo-Nirenberg inequality. In particular to bound $\nabla u^\alpha$, we consider the following case
\begin{equation}
	\|\nabla u^\alpha\|_{L^2} \leq C \|\differ^3 u^\alpha\|_{L^p}^{\frac{1}{p}-\frac{1}{2}} \|\nabla u^\alpha\|_{L^p}^{\frac{3}{2}-\frac{1}{p}}, \qquad \forall p \in(1,2].
\end{equation}
Owing to \eqref{eq:w1pbound}-\eqref{eq:w3pbound}, we obtain
\begin{equation*}
    \|\nabla u^\alpha\|_{L^2} \leq C \alpha^{\frac{1}{2}-\frac{1}{p}} \|q^\alpha\|_{L^p},\qquad \forall p \in(1,2].
\end{equation*}
We proceed in analogous way to control $\|\lapl u^\alpha\|_{L^2}$ and we infer
\begin{equation*}
	\|\lapl u^\alpha\|_{L^2} \leq C\| \differ^3 u^\alpha \|_{L^p}^{\frac{1}{p}}\|\nabla u^\alpha \|_{L^p}^{1-\frac{1}{p}}\leq C \alpha^{-\frac{1}{p}}\|q^\alpha\|_{L^p},\qquad \forall p \in(1,2].
\end{equation*}
Let us define $v^\alpha = k \ast q^\alpha$, thus
\begin{displaymath}
	v^\alpha= u^\alpha-\alpha\lapl u^\alpha.
\end{displaymath}
We test this identity against $- \lapl u^\alpha$ to infer
\begin{equation}
	\|\nabla u^\alpha\|^2_{L^2}+\alpha \| \lapl u^\alpha\|^2_{L^2} =(\nabla v^\alpha,\nabla u^\alpha)_{L^2} \implies\frac{1}{2}\|\nabla u^\alpha\|^2_{L^2}+\alpha \| \lapl u^\alpha\|^2_{L^2} \leq \|\nabla v^\alpha\|^2_{L^2}, 
\end{equation}
by Young inequality. 
Lastly, employing \eqref{eq:w1pbound}, we deduce$$\sqrt{\alpha} \|\lapl u^\alpha\|_{L^2} \leq C \|q^\alpha\|_{L^p}, \qquad \forall p \geq 2,$$ which concludes the proof.
\end{proof}
\subsection{Strong convergence of the velocity and energy conservation}
At this point, we have all the tools to show the strong convergence of the velocity.
\begin{prop}\label{prop1}
	Let $T>0$ arbitrary and finite, let $q^\alpha_0$ and $\omega_0$ under the assumptions of \textsc{Theorem \ref{thm:strongconvvort}} and let the couple $(u^\alpha,q^\alpha)$ be the corresponding global solution to the $\alpha$-Euler equations. Then, there exists a couple $(u, \omega)$ such that, up to subsequences, it holds
	\begin{equation}
	u^\alpha \rightarrow u \quad \text{strongly in} \enskip \mathcal{C}([0,T];L^{2}(\mathbb{T}^2)),
\end{equation}	
\begin{equation}
	q^\alpha \rightharpoonup^* \omega \quad \text{weakly-$*$ in} \enskip L^\infty(0,T;L^p(\mathbb{T}^2))
\end{equation}
	and $u$ is a distributional solution of the Euler equations with initial datum $u_0 = k\ast \omega_0$. Finally, $u$ conserves the kinetic energy, namely
	\begin{equation*}
		\|u(t)\|_{L^2} = \|u_0\|_{L^2}, \quad \forall t \in (0,T).
	\end{equation*}
\end{prop}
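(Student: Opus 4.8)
The plan is to extract a convergent subsequence by compactness, pass to the limit in the weak formulation, and then upgrade weak to strong convergence in $L^2_x$ for the velocity by combining the $\alpha$-norm conservation with lower semicontinuity. First I would record the uniform bounds. By \eqref{eq:w1pbound} the family $\{u^\alpha\}$ is bounded in $L^\infty_t W^{1,p}_x$ and $\{q^\alpha\}$ in $L^\infty_t L^p_x$, uniformly in $\alpha$; by Lemma \ref{lemma:iftimie} (for $p \le 2$) or \eqref{eq:consalphanorm} directly (for $p \ge 2$) the quantity $\sqrt{\alpha}\,\|\lapl u^\alpha\|_{L^2}$ is bounded, so $\alpha \lapl u^\alpha \to 0$ strongly in $L^\infty_t L^2_x$ and hence $v^\alpha = u^\alpha - \alpha \lapl u^\alpha$ has the same weak-$*$ limit as $u^\alpha$. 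From the vorticity equation \eqref{eq:vortalphaeuler}, $\partial_t q^\alpha = -\diver(u^\alpha q^\alpha)$ is bounded in $L^\infty_t W^{-1,r}_x$ for a suitable $r$ (using $u^\alpha q^\alpha$ bounded in $L^\infty_t L^r_x$ by Hölder since $u^\alpha \in L^\infty_x$ when $p>2$, or via Sobolev embedding for general $p$); interpolating the spatial bound on $q^\alpha$ with the time-equicontinuity gives, through an Aubin–Lions–Simon argument, strong convergence of $q^\alpha$ in $\mathcal C([0,T]; H^{-1}_x)$ (say) along a subsequence, and weak-$*$ convergence $q^\alpha \rightharpoonup^* \omega$ in $L^\infty_t L^p_x$. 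Correspondingly $u^\alpha = (\identity-\alpha\lapl)^{-1} k * q^\alpha \to u := k*\omega$, with the convergence strong in $\mathcal C([0,T]; L^2_x)$: indeed $u^\alpha$ is bounded in $L^\infty_t W^{1,p}_x$ and equicontinuous in time with values in a negative Sobolev space (since $v^\alpha$ inherits the transport-equation time regularity), so Ascoli gives a subsequence converging in $\mathcal C([0,T]; L^2_x)$, and the limit must be $k*\omega$ because $\alpha \lapl u^\alpha \to 0$.

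Next I would pass to the limit in the equations. For the vorticity formulation one tests \eqref{eq:vortalphaeuler} against $\varphi \in \mathcal C^\infty_c([0,T)\times\mathbb{T}^2)$; the only nonlinear term is $\int\!\!\int q^\alpha u^\alpha \cdot \nabla\varphi$, and since $q^\alpha \to \omega$ strongly in $\mathcal C_t H^{-1}_x$ while $u^\alpha \to u$ strongly in $\mathcal C_t L^2_x$ with $u^\alpha$ bounded in $L^\infty_t W^{1,p}_x$, the product converges (strong times weak, upgraded by the Sobolev regularity of $u^\alpha$). This yields that $(u,\omega)$ solves \eqref{eq:vorteuler}–\eqref{eq:vorteuleraux} distributionally, equivalently that $u$ is a distributional solution of \eqref{eq:stdeuler} with $u_0 = k*\omega_0$; here one also uses $q_0^\alpha \to \omega_0$ in $L^p$ to handle the initial-data term. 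One should check that $\int_{\mathbb{T}^2}\omega_0 = 0$ is preserved so the Biot–Savart representation is well posed.

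Finally, for energy conservation I would argue as follows. From \eqref{eq:consalphanorm} we have $\|u^\alpha(t)\|_{L^2}^2 + \alpha\|\nabla u^\alpha(t)\|_{L^2}^2 = \|u^\alpha_0\|_\alpha^2$ for all $t$; by Lemma \ref{lemma:iftimie} the term $\alpha\|\nabla u^\alpha(t)\|_{L^2}^2 \le C\alpha^{1-2/p}\|q^\alpha_0\|_{L^p}^2 \to 0$ when $p \le 2$ (and trivially when $p \ge 2$), and similarly $\alpha\|\nabla u^\alpha_0\|_{L^2}^2 \to 0$, using $q_0^\alpha$ bounded in $L^p$; hence $\|u^\alpha(t)\|_{L^2} \to \|u_0\|_{L^2}$ uniformly in $t$, where $\|u_0\|_{L^2}$ arises because $u^\alpha_0 = (\identity-\alpha\lapl)^{-1}k*q_0^\alpha \to k*\omega_0 = u_0$ in $L^2$. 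On the other hand the strong convergence $u^\alpha(t) \to u(t)$ in $L^2_x$ gives $\|u(t)\|_{L^2} = \lim_\alpha \|u^\alpha(t)\|_{L^2} = \|u_0\|_{L^2}$ for every $t$, which is \eqref{eq:consenergyfinal}. I expect the main obstacle to be the compactness step: obtaining strong convergence of $q^\alpha$ in a negative norm requires a careful choice of exponents so that $u^\alpha q^\alpha$ lies in a reflexive $L^r_x$ uniformly in $\alpha$ and so that the Aubin–Lions interpolation actually closes for all $p \in (1,\infty)$ (the endpoint $p$ near $1$ being the delicate case), together with verifying that the flux term $\alpha\rotore(\sum_j \lapl u^\alpha_j \nabla u^\alpha_j)$ — already shown in Section \ref{sec:prelim} to cancel — does not reappear and spoil the uniform time-regularity estimate on $q^\alpha$.
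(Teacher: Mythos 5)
Your overall architecture (compactness, limit passage, then energy conservation via the conserved $\alpha$-norm plus Lemma \ref{lemma:iftimie}) is sound, and your Step on energy conservation is essentially identical to the paper's and correct. The genuine gap is in where you get time-equicontinuity and in which formulation you pass to the limit. You propose to bound $\partial_t q^\alpha = -\diver(u^\alpha q^\alpha)$ and to pass to the limit in the term $\iint q^\alpha u^\alpha\cdot\nabla\varphi$. Both require $u^\alpha q^\alpha$ to be uniformly bounded in some $L^r_x$ with $r\ge 1$. With the available uniform bounds $u^\alpha\in L^\infty_t W^{1,p}_x\hookrightarrow L^\infty_t L^{p^*}_x$ ($p^*=2p/(2-p)$) and $q^\alpha\in L^\infty_t L^p_x$, H\"older gives $1/r=1/p+1/p^*=(4-p)/(2p)$, so $r\ge 1$ only when $p\ge 4/3$; for $p\in(1,4/3)$ the product is not even known to be integrable, and the extra information $u^\alpha\in L^\infty_t L^2_x$ from the $\alpha$-norm does not help (that would need $p\ge 2$). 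You flag "the endpoint $p$ near $1$" as delicate but offer no mechanism to close it, and the statement claims the full range $p\in(1,\infty)$. The same obstruction invalidates the claim that solving the vorticity formulation distributionally is "equivalent" to solving the velocity formulation: for $p\le 4/3$ the distributional vorticity formulation does not even make sense with these bounds.

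The paper circumvents this entirely by never using the vorticity equation for compactness or for the limit equation. It tests the momentum equation against $\varphi^\alpha=(\identity-\alpha\lapl)^{-1}\varphi$, so that $\langle\partial_t(u^\alpha-\alpha\lapl u^\alpha),\varphi^\alpha\rangle=\langle\partial_t u^\alpha,\varphi\rangle$, and estimates every term on the right-hand side (including the pressure, split into a quadratic part and an $O(\alpha)$ part via the Poisson equation) using only $\|u^\alpha\|_{L^2}^2$ and $\alpha\|\nabla u^\alpha\|_{L^2}^2$, both controlled by the conserved $\alpha$-norm. This yields $\partial_t u^\alpha$ bounded in $L^\infty_t H^{-4}_x$ for \emph{every} $p\in(1,\infty)$, Aubin--Lions gives $u^\alpha\to u$ in $\mathcal C([0,T];L^2)$, and the limit is then taken in the velocity formulation, where the nonlinearity $u^\alpha\otimes u^\alpha$ is always in $L^1_x$ and the $O(\alpha)$ terms vanish by Lemma \ref{lemma:iftimie}. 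The identification $u=k\ast\omega$ is done separately by a duality/convolution-switching argument using only the weak-$*$ convergence of $q^\alpha$. To repair your proposal you would need to replace the vorticity-equation compactness with such a momentum-equation estimate (or otherwise restrict to $p>4/3$, which would not prove the stated result).
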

\begin{proof}\textsc{\textbf{Step 1}: Weak convergences.}
We know that the sequence $q^\alpha$ is bounded uniformly in $L^\infty(0,T;L^p(\mathbb{T}^2))$ by \textsc{Theorem \ref{thm:wpalpha}}. By a standard compactness argument, we have that up to (non relabelled) subsequences
\begin{equation}
    q^\alpha \rightharpoonup^* \omega \quad \text{weakly-$*$ in} \enskip L^\infty(0,T;L^p(\mathbb{T}^2)).\label{eq:weakconvq}
\end{equation}
 By \eqref{eq:w1pbound}, the corresponding velocities $u^\alpha$ are bounded uniformly in $\alpha$ with respect to the $L^\infty_t W^{1,p}_x$-norm, hence, up to a (sub)subsequence, we have
\begin{equation}
   u^\alpha \rightharpoonup^* u \quad \text{weakly-$*$ in} \enskip L^\infty(0,T;W^{1,p}(\mathbb{T}^2)),\label{eq:weakconvu1}
\end{equation}
which implies
\begin{equation}
   u^\alpha \rightharpoonup^* u \quad \text{weakly-$*$ in} \enskip L^\infty(0,T;L^{2}(\mathbb{T}^2)).\label{eq:weakconvu}
\end{equation}
\textsc{\textbf{Step 2}: Strong convergence of the velocity.}
We rewrite \eqref{eq:alphaeuler} as
\begin{multline}
	\partial_t (u^\alpha -\alpha \lapl u^\alpha)=-\diver (u^\alpha \otimes u^\alpha)\\ 
	+\alpha \sum_{i,j} \partial_j \partial_i (u^\alpha_j \partial_i u ^\alpha)-\alpha \sum_{i,j} \partial_j (\partial_i u^\alpha_j \partial_i u^\alpha) +\alpha \sum_{i,j} \partial_i( \partial_i u_j^\alpha\nabla u_j^\alpha)-\nabla \pi^\alpha, \label{eq:euleralphalong}
\end{multline}
thanks to standard tensor identities and  $\pi^\alpha= p+|u^\alpha|^2/2$. \\
Now, we want to control the time derivative of the velocity and use Aubin-Lions Lemma, in order to show the strong convergence of $u^\alpha$. Let $\varphi \in \mathcal{C}^\infty(\mathbb{T}^2)$, then there exists $\varphi^\alpha := (1-\alpha \lapl)^{-1} \varphi$ and $\varphi^\alpha \in \mathcal{C}^\infty(\mathbb{T}^2)$.
Multiplying \eqref{eq:euleralphalong} by $\varphi^\alpha$ and integrating over $\mathbb{T}^2$, after some integrations by parts, we infer
\begin{multline}
	\int_{\mathbb{T}^2} \partial_t (u^\alpha -\alpha \lapl u^\alpha) \cdot  \varphi^\alpha dx= \int_{\mathbb{T}^2}  (u^\alpha \otimes u^\alpha):\nabla \varphi^\alpha dx+ \alpha \sum_{j,i} \int_{\mathbb{T}^2} u^\alpha_j \partial_i u^\alpha \cdot \partial_j \partial_i \varphi^\alpha dx\\+ \alpha \sum_{j,i} \int_{\mathbb{T}^2}\partial_i u^\alpha_j \partial_i u^\alpha \cdot \partial_j\varphi^\alpha dx- \alpha \sum_{j,i} \int_{\mathbb{T}^2}\partial_i u^\alpha_j \nabla u^\alpha_j \cdot \partial_i\varphi^\alpha dx-\int_{\mathbb{T}^2}\nabla \pi^\alpha\cdot \varphi^\alpha dx \\=T_1+T_2+T_3+T_4-\int_{\mathbb{T}^2}\nabla \pi^\alpha\cdot \varphi^\alpha dx.\label{eq:testedlong}
\end{multline}
On the left hand side of \eqref{eq:testedlong}, we get
\begin{equation}
	\int_{\mathbb{T}^2} \partial_t (u^\alpha -\alpha \lapl u^\alpha) \cdot  \varphi^\alpha dx= \int_{\mathbb{T}^2} \partial_t u^\alpha \cdot  (\idnt-\alpha\lapl)\varphi^\alpha dx= \int_{\mathbb{T}^2} \partial_t u^\alpha \cdot  \varphi dx.  \label{eq:lhspassage}
\end{equation}
Hence, we need to bound every term on the right hand side of \eqref{eq:testedlong} to control $\{\partial_t u^\alpha\}$.
The control on the first term in the right hand side of \eqref{eq:testedlong} is straightforward 
\begin{equation}
	|T_1 |:= \left|\int_{\mathbb{T}^2}(u^\alpha \otimes u^\alpha):\nabla \varphi^\alpha dx\right| \leq C \|u^\alpha\|^2_{L^2}\|\nabla \varphi^\alpha \|_{L^\infty} \leq C \|u^\alpha_0 \|^2_{\alpha} \|\varphi^\alpha\|_{H^3}.\label{eq:cita1}
\end{equation}
For the other terms, we employ the bound on the $\alpha$-norm given by \eqref{eq:consalphanorm}. In particular, for the second term of the right hand side of \eqref{eq:testedlong}, we obtain
\begin{equation}
	|T_2| :=\left| \alpha \sum_{j,i} \int_{\mathbb{T}^2} u^\alpha_j \partial_i u^\alpha \cdot \partial_j \partial_i \varphi^\alpha dx\right|
	\leq C \alpha \|u^\alpha \|_{L^2}\|\nabla u^\alpha\|_{L^2} \| \varphi^\alpha \|_{W^{2, \infty}} \leq C \alpha^{\frac{1}{2}}\|u^\alpha_0 \|^2_{\alpha}\| \varphi^\alpha \|_{H^4}. \label{eq:cita2}
\end{equation}
Finally, the third and fourth term are respectively bounded in the following way
\begin{align}
	|T_3|&:=\left| \alpha \sum_{j,i} \int_{\mathbb{T}^2}\partial_i u^\alpha_j \partial_i u^\alpha \cdot \partial_j\varphi^\alpha dx\right|
	\leq C \alpha \|\nabla u^\alpha \|_{L^2}^2\| \nabla \varphi^\alpha \|_{L^{ \infty}} \leq C \ \|u^\alpha_0 \|^2_{\alpha}\| \varphi^\alpha \|_{H^3}, \label{eq:cita3}\\
|T_4 |&:=\left| \alpha \sum_{j,i} \int_{\mathbb{T}^2}\partial_i u^\alpha_j \nabla u^\alpha_j \cdot \partial_i\varphi^\alpha dx\right|
	\leq C \alpha \|\nabla u^\alpha \|_{L^2}^2\| \nabla \varphi^\alpha \|_{L^{ \infty}} \leq C \ \|u^\alpha_0 \|^2_{\alpha}\| \varphi^\alpha \|_{H^3}. \label{eq:cita4}
\end{align}
Lastly, we need to estimate the pressure term. Let us consider \eqref{eq:euleralphalong} and take its divergence. Owing to the incompressibility constraint, we get
\begin{equation*}
	\partial_i \partial_j( u^\alpha_i u^\alpha_j )-\alpha\partial_l\left(  \partial_j \partial_i (u^\alpha_j \partial_i u ^\alpha_l)- \partial_j (\partial_i u^\alpha_j \partial_i u^\alpha_l) +  \partial_i( \partial_i u_j^\alpha\partial_l u_j^\alpha)\right) =- \lapl \pi^\alpha.
\end{equation*}
Here, we consider the pressure as the sum of two contributions $\pi^\alpha = \pi^\alpha_1+\pi^\alpha_2$ such that
\begin{equation*}
	\lapl \pi_1^\alpha =- \partial_i \partial_j( u^\alpha_i u^\alpha_j ), \quad \lapl \pi_2^\alpha =\alpha \partial_l\left( \partial_j \partial_i (u^\alpha_j \partial_i u ^\alpha_l)- \partial_j (\partial_i u^\alpha_j \partial_i u^\alpha_l) + \partial_i( \partial_i u_j^\alpha\partial_l u_j^\alpha)\right) . 
\end{equation*}
For the term $\pi^\alpha_1$, we notice that it can be bounded analogously to \eqref{eq:cita1}, which is
\begin{equation}
	\left|\int_{\mathbb{T}^2}\nabla \pi^\alpha_1 \cdot \varphi^\alpha dx\right| =   \left|\int_{\mathbb{T}^2} \partial_j (u_i^\alpha u_j^\alpha)\varphi^\alpha_j dx\right|\leq |T_1 |\leq C \|u^\alpha\|^2_{L^2}\|\varphi^\alpha\|_{H^3}.\label{eq:stimapre1}
\end{equation}\\
The control on the term $\pi^\alpha_2$ is exactly equivalent to the ones in \eqref{eq:cita2}-\eqref{eq:cita3}-\eqref{eq:cita4}, indeed
\begin{equation}
\left|	\int_{\mathbb{T}^2}\nabla \pi^\alpha_2 \cdot \varphi^\alpha dx\right| \leq |T_2|+|T_3|+|T_4| \leq C(\|u^\alpha\|^2_\alpha) \|\varphi^\alpha\|_{H^4}. \label{eq:stimapre2}
\end{equation}
Now, the estimates for the non-linear terms \eqref{eq:cita1}-\eqref{eq:cita2}-\eqref{eq:cita3}-\eqref{eq:cita4} and the ones for the pressure \eqref{eq:stimapre1}-\eqref{eq:stimapre2} complete the control of the right hand side of \eqref{eq:testedlong}. Indeed, employing the elliptic estimate $\| \varphi^\alpha \|_{H^4}\leq C\| \varphi \|_{H^4}$, we infer
\begin{equation}
	\langle \partial_t u^\alpha, \varphi \rangle \leq C \|\varphi^\alpha \|_{H^4 (\mathbb{T}^2)}\leq C \|\varphi \|_{H^4 (\mathbb{T}^2)}.\label{eq:H-sbound}
\end{equation}
Thus, $\partial_t u^\alpha$ is uniformly bounded with respect to $\alpha$ in $L^\infty_t H^{-4}_x$. 
The immersion of $L^2(\mathbb{T}^2)$ is continuous in $H^{-4}(\mathbb{T}^2)$ and by \eqref{eq:weakconvu1} the velocity converges weakly-$*$ in $L^\infty(0,T; W^{1,p}(\mathbb{T}^2))$ with $W^{1,p}(\mathbb{T}^2)$ compactly embedded in $L^2(\mathbb{T}^2)$. Hence, we use Aubin-Lions lemma to infer that up to a new (sub)subsequence
\begin{equation}
	u^\alpha \rightarrow u \quad \text{strongly in} \enskip \mathcal{C}([0,T];L^{2}(\mathbb{T}^2)).\label{eq:strongconvu}
\end{equation}
\textsc{\textbf{Step 3}: Equation for the velocity.} We want to show that the limit $u$ is a solution to the Euler equation with initial datum $u_0$. First, we recover strong convergence of $u^\alpha_0$ to $u_0$ in $L^2(\mathbb{T}^2)$.
We recall that the initial data are defined as 
\begin{equation*}
	v^\alpha_0:= k \ast q^\alpha_0, \quad u^\alpha_0:= (\identity-\alpha \lapl)^{-1} v^\alpha_0, \quad u_0:= k \ast \omega_0.
\end{equation*}  Owing to \eqref{eq:calderonzyg} and $q^\alpha_0 \to \omega_0$ in $L^p(\mathbb{T}^2)$ by hypothesis, we have that
\begin{equation}
	\|v^\alpha_0\|_{L^2}\to \|u_0\|_{L^2}.\label{eq:convinitene}
\end{equation}
Then, we consider \textsc{Lemma \ref{lemma:iftimie}} and if $p\geq 2$, we deduce
\begin{equation}
 \alpha\|\lapl u^\alpha_0\|_{L^2}=\sqrt{\alpha}(\sqrt{\alpha}\|\lapl u^\alpha_0\|_{L^2})\xrightarrow{\alpha \to 0} 0.\label{eq:newcit1}
\end{equation}
Whereas, if $p \in (1,2)$, we obtain
\begin{equation}
 \alpha\|\lapl u^\alpha_0\|_{L^2}=\alpha^{1-\frac{1}{p}}(\alpha^\frac{1}{p}\|\lapl u^\alpha_0\|_{L^2})\xrightarrow{\alpha \to 0} 0. \label{eq:newcit2}
\end{equation}
By \eqref{eq:newcit1}-\eqref{eq:newcit2}, we infer from \eqref{eq:convinitene} that
\begin{equation}
	\|u^\alpha_0-v^\alpha_0\|_{L^2} = \alpha\|\lapl u^\alpha_0\|_{L^2} \to 0 \implies 	\|u^\alpha_0\|_{L^2}\to \|u_0\|_{L^2}.\label{eq:laplpinit}
\end{equation}
Now, we are left to show that the limit $u$ is a distributional solution to the velocity formulation of the Euler equations. Thus, we need to pass to the limit into \eqref{eq:euleralphalong}. 
The term $ \partial_t (u^\alpha -\alpha \lapl u^\alpha) \rightarrow  \partial_t u$ in the sense of distribution thanks to \eqref{eq:weakconvu}. Indeed, for any $\varphi \in \mathcal{C}^\infty_c((0,T) \times \mathbb{T}^2)$, it holds
\begin{multline*}
	\langle \partial_t (u^\alpha -\alpha \lapl u^\alpha),\varphi\rangle =\int_{0}^T \int_{\mathbb{T}^2} u^\alpha\cdot (-\partial_t \varphi +\alpha \lapl \partial_t \varphi) dx ds\\ \to \int_{0}^T \int_{\mathbb{T}^2} u\cdot(-\partial_t \varphi) dx ds =	\langle \partial_t u,\varphi\rangle.	
\end{multline*} Let $p\in (1,2)$, considering the right hand side of \eqref{eq:euleralphalong}, thanks to \textsc{Lemma \ref{lemma:iftimie}}, we obtain
\begin{equation}
    \begin{aligned}
   \|\alpha u^\alpha_j \partial_i u ^\alpha\|_{L^1} &\leq C\alpha\|u^\alpha\|_{L^2}\|\nabla u^\alpha\|_{L^2}\leq C\alpha^{\frac{3}{2}-\frac{1}{p}}\|u^\alpha_0\|_{\alpha}\|q^\alpha_0\|_{L^p} \rightarrow 0,\\
   \| \alpha\partial_i u^\alpha_j \partial_i u^\alpha\|_{L^1} &\leq C\alpha\|\nabla u^\alpha\|^2_{L^2}\leq C\alpha^{2-\frac{2}{p}}\|q^\alpha_0\|^2_{L^p}\rightarrow 0,\\
   \| \alpha \partial_i u_j^\alpha\nabla u_j^\alpha\|_{L^1} & \leq C\alpha\|\nabla u^\alpha\|^2_{L^2}\leq C\alpha^{2-\frac{2}{p}}\|q^\alpha_0\|^2_{L^p}\rightarrow 0.
\end{aligned}\label{eq:citasottoqua}
\end{equation}
Analogously for  $p \in[2, \infty)$, we use \eqref{eq:w1pbound} to deduce
\begin{equation}
	\begin{aligned}
		\|\alpha u^\alpha_j \partial_i u ^\alpha\|_{L^1} &\leq C\alpha\|u^\alpha\|_{L^2}\|\nabla u^\alpha\|_{L^2}\leq C\alpha\|u^\alpha_0\|_{\alpha}\|q^\alpha_0\|_{L^p} \rightarrow 0,\\
		\| \alpha\partial_i u^\alpha_j \partial_i u^\alpha\|_{L^1} &\leq C\alpha\|\nabla u^\alpha\|^2_{L^2}\leq C\alpha\|q^\alpha_0\|^2_{L^p}\rightarrow 0,\\
		\| \alpha \partial_i u_j^\alpha\nabla u_j^\alpha\|_{L^1} & \leq C\alpha\|\nabla u^\alpha\|^2_{L^2}\leq C\alpha\|q^\alpha_0\|^2_{L^p}\rightarrow 0.
	\end{aligned}\label{eq:citasottoqua2}
\end{equation}
After integration by parts, for any $p \in (1,\infty)$ inequalities \eqref{eq:citasottoqua2}-\eqref{eq:citasottoqua} imply the following convergences in the sense of distribution
\begin{displaymath}
    \alpha \sum_{i,j} \partial_j \partial_i (u^\alpha_j \partial_i u ^\alpha) \rightarrow 0,\quad
    \alpha \sum_{i,j} \partial_j (\partial_i u^\alpha_j \partial_i u^\alpha)  \rightarrow 0\quad\text{and}\quad
    \alpha \sum_{i,j} \partial_i( \partial_i u_j^\alpha\nabla u_j^\alpha) \rightarrow 0.
\end{displaymath}
Hence, in the right hand side of \eqref{eq:euleralphalong}, we are left to pass to the limit for  $\diver(u^\alpha \otimes u^\alpha)$. However, this is implied by strong convergence of the velocity in \eqref{eq:strongconvu}.\\
\begin{comment}
Finally, let $p >\frac{4}{3}$, the strong convergence of the velocity in $L^\infty_tL^{p*}_x$, can be shown in an analogous way of \eqref{eq:strongconvu} starting from \eqref{eq:weakconvu1}. This result together with \eqref{eq:weakconvq} allows to pass to the limit in the vorticity equation \eqref{eq:vortalphaeuler}. Then, we obtain a couple $(\overline{u}, \overline{\omega})$ satisfying
 \begin{equation*}
 	\int_0^T \int_{\mathbb{T}^2}\overline{\omega} \partial_t \varphi+ \overline{\omega} \overline{u}\cdot\nabla \varphi ds dx + \int_{\mathbb{T}^2} \overline{\omega_0}\varphi(0,x), \quad \forall \varphi \in \mathcal{C}^\infty_c dx=0.
 \end{equation*}
 This holds true only if $p >\frac{4}{3}$, which implies integrability of $\overline{\omega} \cdot \overline{u}$. Indeed, in order to have that product integrable we need
 \begin{equation*}
 	p^* = \frac{2p}{2-p} \geq \frac{p}{p-1}=q \quad \iff \quad p >\frac{4}{3}.
 \end{equation*}
\end{comment}
\textsc{\textbf{Step 4}: Energy conservation.}
By \textsc{Theorem \ref{thm:wpalpha}}, we know that the $\alpha$-norm of the solution is conserved, namely
\begin{equation}
	\|u^\alpha(t)\|^2_{L^2}+\alpha\|\nabla u^\alpha(t)\|^2_{L^2}=	\|u^\alpha_0\|^2_{L^2}+\alpha\|\nabla u^\alpha_0\|^2_{L^2}\label{eq:recallalfanorm}
\end{equation}
and we want to pass to the limit as $\alpha \to 0$.
Considering
\textsc{Lemma \ref{lemma:iftimie}}, we obtain
\begin{equation}
	\alpha \|\nabla u^\alpha\|_{L^2}^2= \alpha^{2-\frac{2}{p}}  \left(\alpha^{\frac{1}{p}-\frac{1}{2}}\|\nabla u^\alpha\|_{L^2}\right)^2 \leq \alpha^{2-\frac{2}{p}}C^2 \rightarrow 0, \quad \text{as} \enskip \alpha\rightarrow 0, \quad \forall p \in (1,2).\label{eq:energyp<2}
\end{equation}
Moreover, by \eqref{eq:w1pbound} we get
\begin{equation}
	\alpha \|\nabla u^\alpha\|_{L^2}^2 \leq \alpha \|q^\alpha\|_{L^p}^2 \rightarrow 0,  \quad \text{as} \enskip \alpha\rightarrow 0, \quad \forall p \in [2, \infty].\label{eq:energyp>2}
\end{equation}
We proceed in analogous way to control $\nabla u^\alpha_0$ and we infer
\begin{equation}
	\alpha \|\nabla u^\alpha_0\|_{L^2}^2\rightarrow 0, \quad \text{as} \enskip \alpha \to 0, \quad \forall p \in (1,\infty].\label{eq:energypinit}
\end{equation}
Employing \eqref{eq:energyp<2}-\eqref{eq:energyp>2} and \eqref{eq:energypinit}, we pass to the limit into \eqref{eq:recallalfanorm} to obtain the thesis
\begin{equation}
	\|u(t)\|^2_{L^2}=\|u_0\|^2_{L^2}, \quad \forall p \in (1, \infty],
\end{equation}
where we have used the strong convergence of the velocity in $L^2(\mathbb{T}^2)$ expressed by \eqref{eq:strongconvu}-\eqref{eq:laplpinit}.
\end{proof}
\subsection{Strong convergence of the vorticities}
We proceed to prove that the limit solution of the $\alpha$-Euler equations is a Lagrangian solution of the Euler equations. This proof is analogous to the one in \cite{crippaspirito} for the vanishing viscosity scheme.
Let us introduce the transport equation as
\begin{flalign}\label{eq:transportgene}
	\begin{cases}
		\partial_t \rho +b\cdot \nabla \rho=0,&\quad \text{on}  \quad(0,T) \times \mathbb{T}^2 \\
		\rho(0, \cdot) = \rho_0,&\quad \text{on} \quad\mathbb{T}^2,
	\end{cases}
\end{flalign}
with $\diver b= 0$. We define in the following way a renormalized solution.
\begin{definition}\label{def:reno}
 A measurable function $\rho$ is a renormalized solution of \eqref{eq:transportgene}, if it solves in the sense of distribution
\begin{flalign*}
	\begin{cases}
		\partial_t \beta(\rho) +b\cdot \nabla \beta(\rho)=0,&\quad \text{on}  \quad(0,T) \times \mathbb{T}^2 \\
		\beta(\rho)(0, \cdot) = \beta(\rho_0),&\quad \text{on} \quad\mathbb{T}^2,
	\end{cases}
\end{flalign*}
for any $\beta \in \mathcal{C}^1(\mathbb{R}) \cap L^\infty(\mathbb{R})$.
\end{definition}
We first recall the following lemma, given by  \textsc{Theorem II.6}, \cite{dipernalions}.
\begin{lemma}\label{lemma:duality}
	Let $b$ be a vector field such that
	\begin{equation*}
		b(t,x) \in L^1(0,T;W^{1,p}(\mathbb{T}^2)), \quad \diver b= 0
	\end{equation*}
	and $\rho \in L^\infty(0,T; L^p (\mathbb{T}^2))$ be a renormalized solution of the transport equation according to \textsc{Definition \ref{def:reno}}. Let $\xi \in L^\infty(0,T; L^q(\mathbb{T}^2))$, where $q= \frac{p-1}{p}$, be a renormalized solution of the following backward transport problem
	\begin{flalign*}
		\begin{cases}
			- \partial_t \xi - \diver(b \xi) = \chi, & \quad \text{in} \quad [0,T) \times \mathbb{T}^2\\
			\xi(T,\cdot) = \xi_T, & \quad \text{in} \quad \mathbb{T}^2,
		\end{cases}
	\end{flalign*}
	where $\chi \in L^1(0,T;L^q(\mathbb{T}^2))$ and $\xi_T \in L^q(\mathbb{T}^2)$. Then, it holds
	\begin{equation}
		\int_0^T\int_{\mathbb{T}^2} \chi \rho dx ds =  \int_{\mathbb{T}^2}\xi(x,0)\rho_0(x) dx- \int_{\mathbb{T}^2}\xi_T(x)\rho(x,T) dx.\label{eq:flowestimate}
	\end{equation}
\end{lemma}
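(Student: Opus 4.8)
The plan is to prove \eqref{eq:flowestimate} by reducing it to the elementary identity satisfied by the product $\rho\xi$. Formally, applying the Leibniz rule to the two equations and using $\diver b=0$, one finds that $\rho\xi$ solves the transport equation with source
\begin{equation*}
\partial_t(\rho\xi)+\diver(b\,\rho\xi)=-\chi\rho,
\end{equation*}
so that integrating over $[0,T]\times\mathbb{T}^2$, with the periodicity of the torus annihilating the divergence term, gives
\begin{equation*}
\int_{\mathbb{T}^2}\rho(x,T)\xi(x,T)\,dx-\int_{\mathbb{T}^2}\rho_0(x)\xi(x,0)\,dx=-\int_0^T\!\!\int_{\mathbb{T}^2}\chi\rho\,dx\,ds,
\end{equation*}
which is exactly the claim once one recalls $\xi(\cdot,T)=\xi_T$. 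The obstruction is that $\rho\in L^\infty_tL^p_x$ and $\xi\in L^\infty_tL^q_x$ are far too rough for the Leibniz rule to hold as written, or even for $\rho$ to be an admissible test function in the equation for $\xi$; the renormalization hypotheses are precisely what repairs this.

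To make the argument rigorous I would first truncate. Fix $\beta_k\in\mathcal{C}^1(\mathbb{R})\cap L^\infty(\mathbb{R})$ with $|\beta_k'|\le 1$, $\beta_k(s)=s$ for $|s|\le k$, and $\beta_k(s)\to s$, $\beta_k'(s)\to 1$ pointwise. Then the renormalization property of $\rho$ gives that $\rho_k:=\beta_k(\rho)$ is a bounded distributional solution of $\partial_t\rho_k+b\cdot\nabla\rho_k=0$, and the renormalization property of $\xi$ (for the equation with a source) gives that $\xi_k:=\beta_k(\xi)$ is a bounded distributional solution of $-\partial_t\xi_k-\diver(b\,\xi_k)=\beta_k'(\xi)\chi=:\chi_k$, with $|\chi_k|\le|\chi|$. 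Next I would mollify in space: with $\eta_\varepsilon$ a standard mollifier on $\mathbb{T}^2$, set $\rho_k^\varepsilon:=\rho_k\ast\eta_\varepsilon$ and $\xi_k^\varepsilon:=\xi_k\ast\eta_\varepsilon$; these are smooth in $x$ and, convolving the two equations,
\begin{equation*}
\partial_t\rho_k^\varepsilon+b\cdot\nabla\rho_k^\varepsilon=r_k^\varepsilon,\qquad -\partial_t\xi_k^\varepsilon-\diver(b\,\xi_k^\varepsilon)=\chi_k\ast\eta_\varepsilon+s_k^\varepsilon,
\end{equation*}
where $r_k^\varepsilon$ and $s_k^\varepsilon$ are the DiPerna--Lions commutators between the convolution and the transport operator. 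Because $b\in L^1_tW^{1,p}_x$ and $\rho_k,\xi_k\in L^\infty$, the commutator lemma of \cite{dipernalions} yields $r_k^\varepsilon,s_k^\varepsilon\to 0$ strongly in $L^1((0,T)\times\mathbb{T}^2)$ as $\varepsilon\to 0$, for each fixed $k$.

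Now both $\rho_k^\varepsilon$ and $\xi_k^\varepsilon$ are bounded, smooth in $x$, with $L^1$ time derivatives, so $t\mapsto\int_{\mathbb{T}^2}\rho_k^\varepsilon\xi_k^\varepsilon\,dx$ is absolutely continuous, and inserting the two displayed equations, using $\diver b=0$ and periodicity to kill $\int_{\mathbb{T}^2}\diver(b\,\rho_k^\varepsilon\xi_k^\varepsilon)$, one obtains
\begin{equation*}
\int_{\mathbb{T}^2}\rho_k^\varepsilon(T)\xi_k^\varepsilon(T)-\int_{\mathbb{T}^2}\rho_k^\varepsilon(0)\xi_k^\varepsilon(0)=\int_0^T\!\!\int_{\mathbb{T}^2}\Big(\xi_k^\varepsilon r_k^\varepsilon-\rho_k^\varepsilon s_k^\varepsilon-\rho_k^\varepsilon\,(\chi_k\ast\eta_\varepsilon)\Big)\,dx\,ds.
\end{equation*}
Letting $\varepsilon\to 0$ with $k$ fixed, the commutator terms vanish because $\xi_k^\varepsilon,\rho_k^\varepsilon$ are bounded in $L^\infty$ while $r_k^\varepsilon,s_k^\varepsilon\to 0$ in $L^1$, and the remaining terms pass to the limit using $\rho_k^\varepsilon\to\rho_k$ and $\xi_k^\varepsilon\to\xi_k$ in $\mathcal{C}([0,T];L^m(\mathbb{T}^2))$ for every $m<\infty$ — the time continuity, hence the meaning of the traces at $t=0,T$, being part of the renormalized-solution structure — together with $\chi_k\ast\eta_\varepsilon\to\chi_k$ in $L^1_tL^1_x$; this produces
\begin{equation*}
\int_{\mathbb{T}^2}\beta_k(\rho(T))\,\beta_k(\xi_T)\,dx-\int_{\mathbb{T}^2}\beta_k(\rho_0)\,\beta_k(\xi(0))\,dx=-\int_0^T\!\!\int_{\mathbb{T}^2}\beta_k(\rho)\,\beta_k'(\xi)\,\chi\,dx\,ds.
\end{equation*}
Finally, sending $k\to\infty$, dominated convergence (dominating by $|\rho(\cdot)|\,|\xi(\cdot)|\in L^1_x$ at the two time endpoints and by $|\rho|\,|\chi|\in L^1((0,T)\times\mathbb{T}^2)$ in the bulk, legitimate since $1/p+1/q=1$) upgrades this to \eqref{eq:flowestimate}. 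The step I expect to be the genuine obstacle is the commutator estimate: it is what forces the detour through the truncations $\beta_k$, so that the DiPerna--Lions lemma can be invoked for bounded functions against $W^{1,p}_x$ vector fields, and it is precisely the reason the statement is phrased for renormalized rather than merely distributional solutions — granting it, what remains is the routine bookkeeping of the two limits $\varepsilon\to 0$ and $k\to\infty$.
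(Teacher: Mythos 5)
The paper does not prove this lemma — it is quoted verbatim from DiPerna--Lions (\textsc{Theorem II.6} of \cite{dipernalions}) — and your argument is a correct reconstruction of the standard proof given there: truncation by $\beta_k$ via the renormalization hypothesis, spatial mollification with the commutator lemma to justify the Leibniz rule and the duality pairing, then the limits $\varepsilon\to 0$ and $k\to\infty$. The only cosmetic point is that the exponent in the statement should read $q=\frac{p}{p-1}$ (the paper's $q=\frac{p-1}{p}$ is a typo), which is the conjugacy $1/p+1/q=1$ you correctly use in the dominated convergence step.
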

With the introduction of this setting, we are able to prove that the limit $\omega$ is a Lagrangian solution of the Euler equations.
\begin{prop}
	Let $(u,\omega)$ be the limit of $(u^\alpha,q^\alpha)$ according to \textsc{Proposition \ref{prop1}}. Then, $(u, \omega)$ is a Lagrangian solution of the Euler equations according to \textsc{Definition \ref{def:lagsol}}.
\end{prop}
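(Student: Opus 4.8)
The goal is to show that the limit $(u,\omega)$ from Proposition \ref{prop1} is Lagrangian in the sense of Definition \ref{def:lagsol}, i.e.\ that there is a regular Lagrangian flow $X$ associated to $u$ and that $\omega(t,x)=\omega_0(X_{t,0}(x))$ a.e. Since $u\in L^\infty_tW^{1,p}_x$ with $\diver u=0$ and $u=k\ast\omega$, the DiPerna--Lions/Ambrosio theory (via Lemma \ref{lemma:duality}) gives existence and uniqueness of a regular Lagrangian flow $X$ for $u$, and $\omega$ being a renormalized solution of the linear transport equation \eqref{eq:vorteuler} with this $u$ would then force $\omega(t,\cdot)=(X_{t,0})_\#(\omega_0\,\mathcal L^2)$ in the appropriate sense, which for bounded-compression flows is exactly $\omega(t,x)=\omega_0(X_{t,0}(x))$. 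So the whole matter reduces to two things: (i) $\omega$ is a distributional (hence, by the DiPerna--Lions commutator lemma, renormalized) solution of $\partial_t\omega+u\cdot\nabla\omega=0$ with initial datum $\omega_0$; (ii) $u=k\ast\omega$.

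\textbf{Step 1: Pass to the limit in the vorticity equation.} Each $q^\alpha$ solves \eqref{eq:vortalphaeuler}, i.e.\ for every $\varphi\in\mathcal C^\infty_c([0,T)\times\mathbb T^2)$,
\begin{equation*}
\int_0^T\!\!\int_{\mathbb T^2} q^\alpha\,\partial_t\varphi + q^\alpha u^\alpha\cdot\nabla\varphi\,dx\,ds + \int_{\mathbb T^2} q^\alpha_0\,\varphi(0,\cdot)\,dx = 0.
\end{equation*}
I would pass to the limit term by term. The linear terms pass by the weak-$*$ convergence $q^\alpha\rightharpoonup^*\omega$ in $L^\infty_tL^p_x$ from Proposition \ref{prop1} and by $q_0^\alpha\to\omega_0$ strongly in $L^p$. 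The only delicate term is the bilinear one $\int q^\alpha u^\alpha\cdot\nabla\varphi$: here I combine $q^\alpha\rightharpoonup^*\omega$ weakly in $L^\infty_tL^p_x$ with \emph{strong} convergence of $u^\alpha$ to $u$ in $\mathcal C([0,T];L^2_x)$, upgraded to strong convergence in $L^\infty_tL^{p'}_x$ for $p'<\infty$ using the uniform $L^\infty_tW^{1,p}_x$ bound \eqref{eq:w1pbound} and interpolation (Sobolev embedding $W^{1,p}\hookrightarrow L^{p'}$ for all finite $p'$ in two dimensions, combined with the $L^2$ strong convergence and the Aubin--Lions argument of Proposition \ref{prop1}, Step 2). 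Then $q^\alpha u^\alpha\rightharpoonup\omega u$ in, say, $L^1_{t,x}$, which is enough since $\nabla\varphi$ is smooth and compactly supported. This yields that $(u,\omega)$ solves \eqref{eq:vorteuler} distributionally with datum $\omega_0$.

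\textbf{Step 2: The Biot--Savart relation and renormalization.} For $u=k\ast\omega$: passing to the limit in $v^\alpha=k\ast q^\alpha$ is immediate by $q^\alpha\rightharpoonup^*\omega$ and continuity of convolution with $k$, so $v^\alpha\rightharpoonup^* k\ast\omega$; on the other hand $u^\alpha-v^\alpha=\alpha\Delta u^\alpha\to0$ in $L^2$ by Lemma \ref{lemma:iftimie} and the estimates \eqref{eq:newcit1}--\eqref{eq:newcit2} applied at positive times (using $\|q^\alpha(t)\|_{L^p}=\|q^\alpha_0\|_{L^p}$), hence $u=k\ast\omega$ a.e. Since $u\in L^1_tW^{1,p}_x$ is divergence-free, the DiPerna--Lions theory applies: $\omega\in L^\infty_tL^p_x$ is automatically a renormalized solution of \eqref{eq:vorteuler}, there exists a unique regular Lagrangian flow $X$ for $u$ in the sense of Definition \ref{def:lagflow}, and the representation $\omega(t,\cdot)=\omega_0\circ X_{t,0}$ holds for a.e.\ $(t,x)$ by uniqueness at the level of the transport equation (comparing the renormalized solution $\omega$ with the Lagrangian one $\omega_0\circ X_{t,0}$, which is also renormalized and has the same datum). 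This is precisely what Lemma \ref{lemma:duality} encodes, and it is the structure exploited in \cite{crippaspirito}. Therefore $(u,\omega)$ is a Lagrangian solution.

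\textbf{Main obstacle.} The only genuinely nontrivial point is the passage to the limit in the nonlinear product $q^\alpha u^\alpha$: one must be sure that the strong convergence of $u^\alpha$ holds in a topology dual to where $q^\alpha$ only converges weakly. The $L^2$ strong convergence of Proposition \ref{prop1} is not by itself enough when $p$ is close to $1$ (then $p'$ is large), so the key is to promote it to strong convergence in $L^\infty_tL^{p'}_x$ for every finite $p'$ — which is available because the $u^\alpha$ are uniformly bounded in $L^\infty_tW^{1,p}_x\hookrightarrow L^\infty_tL^{r}_x$ for all finite $r$, so interpolating the (relatively compact) $L^2$ convergence against these uniform higher-integrability bounds gives compactness in every $L^\infty_tL^{p'}_x$. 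Everything else is an application of the linear DiPerna--Lions machinery already set up in the excerpt.
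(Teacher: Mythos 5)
Your Step 2 identification of $u=k\ast\omega$ (moving the convolution onto the test function and using $\alpha\Delta u^\alpha\to0$) is essentially the paper's argument and is fine. The problem is in the transport part, and it is concentrated exactly where you flag the ``main obstacle'': your resolution of that obstacle does not work. On $\mathbb{T}^2$ the embedding $W^{1,p}\hookrightarrow L^r$ for \emph{all} finite $r$ holds only for $p\geq 2$; for $p<2$ one only has $W^{1,p}\hookrightarrow L^{p^*}$ with $p^*=\tfrac{2p}{2-p}$. Interpolating the strong $L^2$ convergence of $u^\alpha$ against the uniform $W^{1,p}$ bound therefore gives strong convergence in $L^r$ only for $r<p^*$, and one has $p^*\geq p'=\tfrac{p}{p-1}$ if and only if $p\geq 4/3$. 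So for $p\in(1,4/3)$ you cannot pass to the limit in $\int q^\alpha u^\alpha\cdot\nabla\varphi$ this way; worse, the limit product $u\,\omega$ need not even be in $L^1$, so the distributional vorticity formulation you want $(u,\omega)$ to satisfy is not well defined. A second, independent gap: even in the range where the distributional formulation makes sense, your claim that $\omega\in L^\infty_tL^p_x$ is ``automatically renormalized'' invokes the DiPerna--Lions commutator lemma, which for $u\in L^1_tW^{1,p}_x$ and $\omega\in L^\infty_tL^p_x$ requires $\tfrac1p+\tfrac1p\leq 1$, i.e.\ $p\geq 2$. So your argument is complete only for $p\geq 2$ (which is indeed the easy case the paper dispatches by citing uniqueness directly), and is genuinely incomplete for $p\in(1,2)$.

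The paper avoids both issues by never forming the product $u\,\omega$. It fixes a smooth source $\chi$, solves the \emph{backward dual} transport problems \eqref{eq:backalpha} and \eqref{eq:backxi} driven by $u^\alpha$ and $u$, and uses the DiPerna--Lions stability theorem to get $\xi^\alpha\to\xi$ strongly in $\mathcal{C}([0,T];L^q(\mathbb{T}^2))$ with $q$ the conjugate exponent. Since $u^\alpha$ is smooth, the duality identity $\int_0^T\!\!\int\chi q^\alpha=\int\xi^\alpha(0)q_0^\alpha$ holds exactly, and both sides pass to the limit because $q^\alpha$ only appears \emph{linearly}, paired against strongly convergent objects in the dual space. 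Applying Lemma \ref{lemma:duality} to the Lagrangian (renormalized) solution $\omega_L$ with the same datum yields the same identity, whence $\omega=\omega_L$. If you want to salvage your approach, you must either restrict to $p\geq 2$ or replace Step 1 by this duality/adjoint argument.
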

\begin{proof}\textsc{\textbf{Step 1}: Consistent limit.}
We begin the proof by showing that $ u= k \ast \omega$. The starting point is the equation
	\begin{displaymath}
		u^\alpha-\alpha \lapl u^\alpha = k \ast q^\alpha, \quad \text{a.a.} \enskip t \in (0,T).
	\end{displaymath}
	Given a scalar test function $\varphi \in \mathcal{C}^\infty_c((0,T)\times\mathbb{T}^2)$, we have that
	\begin{displaymath}
		\int_0^T \int_{\mathbb{T}^2} (u^\alpha-\alpha \lapl u^\alpha)\varphi dx ds = \int_0^T \int_{\mathbb{T}^2} ( k \ast q^\alpha)\varphi dx ds.
	\end{displaymath}
	Considering the left hand side, we obtain 
	\begin{multline*}
		\lim_{\alpha \rightarrow 0}\int_0^T \int_{\mathbb{T}^2} (u^\alpha-\alpha \lapl u^\alpha)\varphi dx ds=\\ = \lim_{\alpha \rightarrow 0} \int_0^T \int_{\mathbb{T}^2} u^\alpha\varphi dx ds - \lim_{\alpha \rightarrow 0}\alpha  \int_0^T \int_{\mathbb{T}^2} u^\alpha\lapl\varphi dx ds =\int_0^T \int_{\mathbb{T}^2}u\varphi dx ds,
	\end{multline*}
	where we exploited the smoothness of $\varphi$ and \eqref{eq:strongconvu}.
	Instead, for the right hand side we have
	\begin{multline*}
		\int_0^T \int_{\mathbb{T}^2} ( k \ast q^\alpha)\varphi dx ds= \int_0^T \int_{\mathbb{T}^2} \left(\int_{\mathbb{T}^2}  k_{\mathbb{T}^2}(x-y)  q^\alpha(x,s)\right) \varphi(x,s) dx ds =\\= \int_0^T \int_{\mathbb{T}^2} q^\alpha(x,s)\left(\int_{\mathbb{T}^2}k_{\mathbb{T}^2}(x-y)\varphi(x,s)\right) dx ds=
		\int_0^T \int_{\mathbb{T}^2} q^\alpha(k \ast \varphi) dx ds.
	\end{multline*}
	Here, being $\varphi \in \mathcal{C}^\infty_c((0,T)\times\mathbb{T}^2)$, by Young inequality we have
	\begin{equation}
		\|k \ast \varphi\|_{L^q}\leq \|k\|_{L^1}\|\varphi\|_{L^q} \leq C.
	\end{equation}
	Employing \eqref{eq:weakconvq}, taking the limit as $\alpha \rightarrow 0^+$ and switching back the convolution, we infer
	\begin{displaymath}
		\int_0^T \int_{\mathbb{T}^2}u\varphi dx ds = \int_0^T \int_{\mathbb{T}^2} \omega(k\ast \varphi) dx ds = \int_0^T \int_{\mathbb{T}^2} ( k \ast \omega)\varphi dx ds,
	\end{displaymath}
	for any $\varphi \in \mathcal{C}^\infty_c((0,T)\times\mathbb{T}^2)$. This implies $u =  k \ast \omega$ for almost every $(t,x) \in(0,T)\times \mathbb{T}^2$ as wanted.\\
\textsc{\textbf{Step 2}: Lagrangian solution.}
We show that $(u, \omega)$ is a Lagrangian solution. We only need to prove it for $p \in(1,2)$, because for $p \geq 2$ it follows directly from uniqueness of the solution of the transport equation (see \cite{dipernalions}, \textsc{Theorem II.3}).
Let us consider \eqref{eq:vortalphaeuler}, let $\chi \in \mathcal{C}^\infty((0,T) \times\mathbb{T}^2)$ and let us define the backward transport problem for $u^\alpha$ of the form
\begin{flalign} 
\begin{cases}
- \partial_t \xi^\alpha - \diver(u^\alpha \xi^\alpha) = \chi ,& \quad \text{in} \quad (0,T) \times \mathbb{T}^2\\
\xi^\alpha(T,\cdot) = 0 ,& \quad \text{in} \quad \mathbb{T}^2
\end{cases}\label{eq:backalpha}
\end{flalign}
and the limit backward problem
\begin{flalign}
\begin{cases}
- \partial_t \xi - \diver(u \xi) = \chi ,& \quad \text{in} \quad (0,T) \times \mathbb{T}^2\\
\xi(T,\cdot) = 0, & \quad \text{in} \quad \mathbb{T}^2.
\end{cases}\label{eq:backxi}
\end{flalign}
Thanks to the stability theorem in DiPerna-Lions, \cite{dipernalions} \textsc{Theorem II.4}, it holds that $\xi^\alpha \rightarrow \xi$ in $\mathcal{C}([0,T];L^q(\mathbb{T}^2))$, for every $q \in [1, \infty]$. Since $u^\alpha$ is smooth, $q^\alpha$ satisfies 
\begin{equation}
    \int_{\mathbb{T}^2}\int_0^T \chi q^\alpha dx ds =  \int_{\mathbb{T}^2}\xi^\alpha(x,0)q_0^\alpha(x) dx,\label{eq:passtothelimitafter}
\end{equation}
where $\xi_\alpha$ solves \eqref{eq:backalpha}.
We recall that $q^\alpha \rightharpoonup^* \omega $ in $L^\infty(0,T;L^p(\mathbb{T}^2))$ and $q^\alpha_0 \rightarrow \omega_0$ in $L^p(\mathbb{T}^2)$, therefore passing to the limit in \eqref{eq:passtothelimitafter}, we obtain
\begin{equation}
    \int_{\mathbb{T}^2}\int_0^T \chi\omega dx ds =  \int_{\mathbb{T}^2}\xi(x,0)\omega_0(x) dx,\label{eq:citalag1}
\end{equation}
where $\xi$ is the unique solution in $\mathcal{C}([0,T];L^q(\mathbb{T}^2))$ of \eqref{eq:backxi}. Using \textsc{Lemma \ref{lemma:duality}} on the limit backward problem, we infer
\begin{equation}
    \int_{\mathbb{T}^2}\int_0^T \chi \omega_L dx ds =  \int_{\mathbb{T}^2}\xi(x,0)\omega_0(x) dx,\label{eq:citalag2}
\end{equation}
where $\omega_L$ is the unique renormalized solution, thus Lagrangian of the transport equation \eqref{eq:transportgene} with velocity field $u$ and initial datum $u_0$ (cf. \cite{dipernalions} \textsc{Theorem II.3}). Subtracting \eqref{eq:citalag1} and \eqref{eq:citalag2} we get
\begin{equation*}
    \int_{\mathbb{T}^2}\int_0^T \chi (\omega_L-\omega) dx ds= 0, \quad \forall \chi \in \mathcal{C}^\infty((0,T) \times\mathbb{T}^2),
\end{equation*}
which implies that $\omega = \omega_{L}$.
\end{proof}
 We notice that for the $\alpha$-Euler equations, we can introduce the flow map $X^\alpha$ using the classical theory of characteristic (cf. \eqref{eq:flowode}-\eqref{eq:lagsol}), since the velocity field $u^\alpha$ Lipschitz due to the embedding $W^{3,p} \hookrightarrow W^{1, \infty}$.
Knowing that the limit solution is Lagrangian for any $p \in (1,\infty)$, we want to study the convergence of the flows as it has been done in \cite{odeest}.
\begin{lemma}\label{lemma:flowsesti}
	Let $T>0$ arbitrary and finite and let $(u^\alpha_0,q^\alpha_0)$ be under the assumptions of \textsc{Theorem \ref{thm:strongconvvort}} and let $(u^\alpha, q^\alpha)$ be the sequence of unique solutions of $\alpha$-Euler equations according to \textsc{Theorem \ref{thm:wpalpha}}. Let $(u, \omega)$ be the limit of $(u^\alpha,q^\alpha)$ obtained in \textsc{Proposition \ref{prop1}} and let $X^\alpha_{t,s}$ and $X_{t,s}$ be the corresponding Lagrangian flows according to \eqref{eq:flowode} and \textsc{Definition \ref{def:lagflow}}.
Then, it holds
\begin{equation}
      \int_{\mathbb{T}^2}\dist(X^\alpha_{t,s}(x) ,X_{t,s}(x)) dx \leq
      \frac{C}{\left| \log\left(\|u^\alpha-u\|_{L^1_tL^1_x}\right)\right|} ,\label{eq:flowestigiusta}
\end{equation}
where the constant $C$ depends on $\|\nabla u\|_{L^1_tL^p_x}$ and on $T$.
\end{lemma}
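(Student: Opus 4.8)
The plan is to adapt the Lagrangian stability estimate of \cite{odeest} to this setting, in which $X^\alpha$ is the classical flow of the velocity $u^\alpha$ — which is Lipschitz thanks to $W^{3,p}\hookrightarrow W^{1,\infty}$ — while $X$ is the regular Lagrangian flow of the limit field $u$ constructed in the previous proposition. Both flows are measure preserving: $X^\alpha$ because $\diver u^\alpha=0$ (Liouville), and $X$ directly by \textsc{Definition \ref{def:lagflow}}. Fix $t\in[0,T]$ and, for a parameter $\delta>0$ to be chosen later, introduce the functional
\begin{equation*}
g_\delta(s):=\int_{\mathbb{T}^2}\log\!\left(1+\frac{\dist\big(X^\alpha_{t,s}(x),X_{t,s}(x)\big)}{\delta}\right)dx,
\end{equation*}
which vanishes at $s=t$ since both flows there equal the identity. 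The strategy is to bound $g_\delta$ uniformly in $s,t$, and then convert this into a bound on $\int_{\mathbb{T}^2}\dist(X^\alpha_{t,s},X_{t,s})\,dx$ by Chebyshev's inequality.

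Differentiating under the integral sign — justified, as in \cite{odeest}, by the absolute continuity of the characteristics — and using $\dot X^\alpha_{t,s}=u^\alpha(s,X^\alpha_{t,s})$ and $\dot X_{t,s}=u(s,X_{t,s})$, I would estimate
\begin{equation*}
\Big|\tfrac{d}{ds}g_\delta(s)\Big|\le\int_{\mathbb{T}^2}\frac{\big|u^\alpha(s,X^\alpha_{t,s})-u(s,X^\alpha_{t,s})\big|}{\delta}\,dx+\int_{\mathbb{T}^2}\frac{\big|u(s,X^\alpha_{t,s})-u(s,X_{t,s})\big|}{\delta+\dist(X^\alpha_{t,s},X_{t,s})}\,dx.
\end{equation*}
The first integral equals $\delta^{-1}\|u^\alpha(s)-u(s)\|_{L^1_x}$ after the measure-preserving change of variables $y=X^\alpha_{t,s}(x)$. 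For the second, the standard Lipschitz--maximal-function estimate $|u(s,a)-u(s,b)|\le C\,\dist(a,b)\,(M\nabla u(s,a)+M\nabla u(s,b))$ (valid for $\dist(a,b)$ below the injectivity radius of the torus, $M$ the Hardy--Littlewood maximal operator) bounds the integrand by $C(M\nabla u(s,X^\alpha_{t,s})+M\nabla u(s,X_{t,s}))$; integrating in $x$, using again that both flows preserve the measure together with $\|M\nabla u(s)\|_{L^1}\le |\mathbb{T}^2|^{1-1/p}\|M\nabla u(s)\|_{L^p}\le C_p\|\nabla u(s)\|_{L^p}$ — this is precisely where $p>1$ is used — gives $C_p\|\nabla u(s)\|_{L^p}$. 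Integrating in $s$ between $s$ and $t$ then yields, for every $s\in[0,T]$,
\begin{equation*}
g_\delta(s)\le\frac{\|u^\alpha-u\|_{L^1_tL^1_x}}{\delta}+C_p\|\nabla u\|_{L^1_tL^p_x}.
\end{equation*}

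To conclude, since $\dist$ is bounded by the (finite) diameter of $\mathbb{T}^2$, for any $\eta>0$ Chebyshev's inequality gives
\begin{equation*}
\int_{\mathbb{T}^2}\dist(X^\alpha_{t,s},X_{t,s})\,dx\le \eta+C\,\big|\{\dist(X^\alpha_{t,s},X_{t,s})>\eta\}\big|\le\eta+\frac{C}{\log(1+\eta/\delta)}\left(\frac{\|u^\alpha-u\|_{L^1_tL^1_x}}{\delta}+C_p\|\nabla u\|_{L^1_tL^p_x}\right).
\end{equation*}
Writing $D:=\|u^\alpha-u\|_{L^1_tL^1_x}$ (if $D=0$ the two flows coincide by uniqueness of the Lagrangian flow and there is nothing to prove), the choice $\delta=D$ and $\eta=|\log D|^{-1}$ makes the first term in the parenthesis equal to $1$ and, for $D$ small, $\log(1+\eta/\delta)\ge c\,|\log D|$; hence the right-hand side is bounded by $C/|\log D|$, with $C$ depending only on $\|\nabla u\|_{L^1_tL^p_x}$ and $T$, which is \eqref{eq:flowestigiusta}. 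The only genuinely delicate points are the maximal-function Lipschitz estimate on the torus and the differentiation under the integral sign, both of which are by now standard from \cite{odeest}; conceptually, the logarithmic rather than algebraic rate is unavoidable here because one only controls the $L^1$-distance between the two \emph{distinct} drifts $u^\alpha$ and $u$, so the $\delta^{-1}$ loss coming from that term can only be compensated by the $\log(1+\eta/\delta)$ gained from Chebyshev.
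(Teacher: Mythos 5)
Your proposal is correct and follows essentially the same route as the paper: the same logarithmic functional $g_\delta$, the same splitting of $\tfrac{d}{ds}g_\delta$ into a drift-difference term (bounded by $\delta^{-1}\|u^\alpha-u\|_{L^1_tL^1_x}$ via measure preservation) and a flow-difference term (bounded via the Lipschitz--maximal-function inequality and $\|M\nabla u\|_{L^1}\le C\|\nabla u\|_{L^p}$), followed by Chebyshev and an optimization in the truncation parameter. The only cosmetic difference is your choice $\eta=|\log D|^{-1}$ where the paper takes $\varepsilon=\sqrt{\delta}$; both yield the same $C/|\log D|$ bound.
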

\begin{proof}
Let $\delta= \|u^\alpha-u\|_{L^1_tL^1_x}$. Let us define the quantity
\begin{equation}
    g_\delta(s) := \int_{\mathbb{T}^2}\log\left(\frac{|X^\alpha_{t,s}(x) -X_{t,s}(x)|}{\delta}+1\right) dx. \label{eq:gdelta}
\end{equation}
Let us consider $ x \mapsto \log\left( 1+ \frac{x}{\delta}\right)$, increasing in $[0, \infty)$. We use Chebyshev inequality to infer
\begin{equation}
    \mathcal{L}^2 \left(\left\{ x\in \mathbb{T}^2 |  \dist(X^\alpha_{t,s}(x) ,X_{t,s}(x)) > \varepsilon\right\} \right)\leq \frac{1}{\log \left( \frac{\varepsilon}{\delta}+1\right)}\int_{\mathbb{T}^2}\log\left(\frac{|X^\alpha_{t,s}(x) -X_{t,s}(x)|}{\delta}+1\right) dx, \label{eq:cheby}
\end{equation}
for every $\varepsilon >0$. We have used that on the torus it holds $\dist(X^\alpha_{t,s}(x) ,X_{t,s}(x)) \leq |X^\alpha_{t,s}(x) -X_{t,s}(x)|$. We split the integral over the torus in two complementary sets as 
\begin{multline*}
    \int_{\mathbb{T}^2}\dist(X^\alpha_{t,s}(x),X_{t,s}(x)) dx \leq \int_{ \left\{ x\in \mathbb{T}^2 |  \dist(X^\alpha_{t,s}(x),X_{t,s}(x)) \leq \varepsilon\right\} }\dist(X^\alpha_{t,s}(x),X_{t,s}(x)) dx \\
    +\int_{ \left\{ x\in \mathbb{T}^2 |  \dist(X^\alpha_{t,s}(x),X_{t,s}(x)) >\varepsilon\right\} }\dist(X^\alpha_{t,s}(x),X_{t,s}(x)) dx.
\end{multline*}
We use \eqref{eq:cheby} to infer
\begin{equation}
    \int_{\mathbb{T}^2}\dist(X^\alpha_{t,s}(x),X_{t,s}(x)) dx \leq \varepsilon
    + \frac{1}{\log \left( \frac{\varepsilon}{\delta}+1\right)}\int_{\mathbb{T}^2}\log\left(\frac{|X^\alpha_{t,s}(x) -X_{t,s}(x)|}{\delta}+1\right) dx. \label{eq:subnu}
\end{equation}
The inequality \eqref{eq:subnu} holds true for any $\varepsilon >0$, thus we can choose $\varepsilon = \sqrt{\delta}$.
We recall that  $\delta$ goes to zero as $\alpha\rightarrow 0$, by definition. Therefore, we can take a value $\alpha$ for which $\delta <1$, which yields
\begin{equation*}
    \frac{1}{\log \left( \frac{1}{\sqrt{\delta}}+1\right)} \leq  \frac{1}{\left| \log \sqrt{\delta}\right|} =\frac{2}{\left| \log\delta\right|}.
\end{equation*}
Substituting into \eqref{eq:subnu} with \eqref{eq:gdelta}, we infer
\begin{equation*}
    \int_{\mathbb{T}^2}\dist(X^\alpha_{t,s}(x),X_{t,s}(x)) dx \leq \sqrt{\delta}
    + \frac{2}{\left| \log\delta\right|}g_\delta(s).
\end{equation*}
By \textsc{Definition \ref{def:lagflow}} and \eqref{eq:gdelta}, we know $g_\delta(t) = 0$ and
\begin{multline}
    g_\delta(s) =  \int_t^s  g_\delta'(\tau) d \tau \leq  \int_t^s \int_{\mathbb{T}^2} \frac{|\dot{X}^\alpha_{t, \tau}(x)-\dot{X}_{t, \tau}(x)|}{|X^\alpha_{t, \tau}(x)-X_{t, \tau}(x)|+\delta}dx d \tau \\
    \leq  \int_t^s \int_{\mathbb{T}^2} \frac{|u^\alpha(\tau,X^\alpha_{t,\tau})-u(\tau,X^\alpha_{t,\tau})|}{|X^\alpha_{t, \tau}(x)-X_{t, \tau}(x)|+\delta}dx d \tau + \int_t^s \int_{\mathbb{T}^2} \frac{|u(\tau,X^\alpha_{t,\tau})-u(\tau,X_{t,\tau})|}{|X^\alpha_{t, \tau}(x)-X_{t, \tau}(x)|+\delta}dx d \tau,\label{eq:citaproofflow}
\end{multline}
where we have summed $\pm u^\alpha(\tau,X^\alpha_{t,\tau})$ in the numerator and we have used the triangular inequality.
The first term in  the right hand side of \eqref{eq:citaproofflow} is controlled as
\begin{multline}
    \int_t^s \int_{\mathbb{T}^2} \frac{|u^\alpha(\tau,X^\alpha_{t,\tau})-u(\tau,X^\alpha_{t,\tau})|}{|X^\alpha_{t, \tau}(x)-X_{t, \tau}(x)|+\delta}dx d \tau  \\\leq\int_t^s \int_{\mathbb{T}^2}\frac{|u^\alpha(\tau,X^\alpha_{t,\tau})-u(\tau,X^\alpha_{t,\tau})|}{\delta}dx d \tau \leq \frac{\|u-u^\alpha\|_{L^1_tL^1_x}}{\delta}\leq C,\label{eq:citaflow1}
\end{multline}
thanks to \eqref{eq:citaproofflow}.
We are left to bound the second term in the right hand side of \eqref{eq:citaproofflow}.
Let $\mathcal{M}$ be the maximal function operator defined on $L^1$ functions as
\begin{equation*}
	\mathcal{M} f(x):= \sup_{r>0}\frac{1}{\mathcal{L}^2(B_r)} \int_{B_r(x)}|f(\xi)|d\xi, \quad \forall x \in \mathbb{T}^2.
\end{equation*}
We recall that for any $f \in L^p$ with $p \in (1,\infty]$, it holds
\begin{equation*}
	\|\mathcal{M}f\|_{L^p}\leq C\|f\|_{L^p}.
\end{equation*}
Moreover, for any $f \in W^{1,1}$, there exists a set $\mathcal{N}\subset \mathbb{T}^2$ such that $\mathcal{L}^2(\mathcal{N}) = 0$ and
\begin{equation}
	|f(x) -f(y) |  \leq C \dist(x,y) (\mathcal{M}\differ f(x) +\mathcal{M}\differ f(y)) , \quad  \forall  x,y \in \mathbb{T} \setminus \mathcal{N}.\label{eq:maxine}
\end{equation}
This allows us to deduce
\begin{multline}
    \int_t^s \int_{\mathbb{T}^2} \frac{|u(\tau,X^\alpha_{t,\tau})-u(\tau,X_{t,\tau})|}{|X^\alpha_{t, \tau}(x)-X_{t, \tau}(x)|+\delta}dx d \tau\leq C  \int_t^s \int_{\mathbb{T}^2}|\mathcal{M} \nabla u(\tau,X^\alpha_{t,\tau})|dx d\tau+\\ + C  \int_t^s \int_{\mathbb{T}^2}| \mathcal{M} \nabla u(\tau,X_{t,\tau})| dx d \tau 
 \leq C \|\nabla u \|_{L^1_tL^1_x}\leq C \|\nabla u \|_{L^1_tL^p_x}, \label{eq:citaflow2}
\end{multline}
thanks to the measure preserving property of the flows $X^\alpha_{t,s}$ and $X_{t,s}$. Substituting \eqref{eq:citaflow1} and \eqref{eq:citaflow2} into \eqref{eq:citaproofflow}, we obtain
\begin{equation*}
      \int_{\mathbb{T}^2}\dist(X^\alpha_{t,s}(x),X_{t,s}(x)) dx \leq \sqrt{\delta}
    + \frac{2}{\left| \log\delta\right|}C \leq \frac{C}{\left| \log\delta\right|} ,
\end{equation*}
exploiting that $\sqrt{\delta} \leq \frac{1}{\log(\delta)}$, for $\delta <1$.
Recalling that $\delta = \|u^\alpha-u\|_{L^1_t L^1_x}$, we have the thesis.
\end{proof}
\textsc{Lemma \ref{lemma:flowsesti}} allows to prove a result on the strong convergence of the vorticities.
\begin{prop}\label{prop2}
	Let $T>0$ arbitrary and finite and let $(u^\alpha_0,q^\alpha_0)$ be under the assumptions of \textsc{Theorem \ref{thm:strongconvvort}}. Let the couple $(u^\alpha,q^\alpha)$ be the corresponding global solution to the $\alpha$-Euler equations. Let $(u, \omega)$ be the limit of $(u^\alpha,q^\alpha)$ obtained in \textsc{Proposition \ref{prop1}}, then for any $\delta >0$, there exists $K(\delta, \omega_0)$ such that for $\alpha$ small enough it holds
\begin{equation}
    \sup_{t \in [0,T)}\|q^\alpha(t) -\omega(t) \|_{L^p} \leq \delta +\frac{K(\delta, \omega_0)}{\left| \log\left(\|u^\alpha-u\|_{L^1_tL^1_x}\right)\right|}+\|q^\alpha_0-\omega_0\|_{L^p},\label{eq:quantestcitar}
\end{equation}
which implies
\begin{equation}
	q^\alpha \rightarrow \omega \quad \text{strongly in} \enskip \mathcal{C}([0,T];L^p (\mathbb{T}^2)).\label{eq:strongconvvort}
\end{equation}
\end{prop}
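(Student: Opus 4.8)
The plan is to combine the Lagrangian representations $q^\alpha(t,\cdot)=q_0^\alpha\circ X^\alpha_{t,0}$ and $\omega(t,\cdot)=\omega_0\circ X_{t,0}$ with the quantitative flow bound of \textsc{Lemma \ref{lemma:flowsesti}}. The first identity holds because $u^\alpha\in L^\infty_tW^{3,p}_x\hookrightarrow L^\infty_tW^{1,\infty}_x$, so $X^\alpha$ is the classical flow, which is measure preserving since $\diver u^\alpha=0$; the second holds because, by the preceding proposition, $(u,\omega)$ is a Lagrangian solution in the sense of \textsc{Definition \ref{def:lagsol}}, with $X$ a regular Lagrangian flow. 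Setting $\delta':=\|u^\alpha-u\|_{L^1_tL^1_x}$, the goal is to bound $\|q_0^\alpha\circ X^\alpha_{t,0}-\omega_0\circ X_{t,0}\|_{L^p}$ uniformly in $t\in[0,T)$.

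I would first regularize: for a standard mollifier fix $\omega_0^\varepsilon:=\omega_0*\rho_\varepsilon$ and split, by the triangle inequality,
\begin{displaymath}
\|q_0^\alpha\circ X^\alpha_{t,0}-\omega_0\circ X_{t,0}\|_{L^p}\le\|(q_0^\alpha-\omega_0^\varepsilon)\circ X^\alpha_{t,0}\|_{L^p}+\|\omega_0^\varepsilon\circ X^\alpha_{t,0}-\omega_0^\varepsilon\circ X_{t,0}\|_{L^p}+\|(\omega_0^\varepsilon-\omega_0)\circ X_{t,0}\|_{L^p}.
\end{displaymath}
Since both flows are measure preserving, the first and third terms equal $\|q_0^\alpha-\omega_0^\varepsilon\|_{L^p}$ and $\|\omega_0^\varepsilon-\omega_0\|_{L^p}$ respectively, and $\|q_0^\alpha-\omega_0^\varepsilon\|_{L^p}\le\|q_0^\alpha-\omega_0\|_{L^p}+\|\omega_0-\omega_0^\varepsilon\|_{L^p}$. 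Given $\delta>0$ I then choose $\varepsilon=\varepsilon(\delta,\omega_0)$ so that $\|\omega_0-\omega_0^\varepsilon\|_{L^p}\le\delta/2$; this produces exactly the term $\|q_0^\alpha-\omega_0\|_{L^p}$ and the additive $\delta$ appearing in \eqref{eq:quantestcitar}.

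The core estimate is the middle term. Now $\omega_0^\varepsilon$ is smooth with $\|\nabla\omega_0^\varepsilon\|_{L^\infty}\le C(\varepsilon,\omega_0)=:L_\varepsilon$, so by periodicity $|\omega_0^\varepsilon(y)-\omega_0^\varepsilon(z)|\le L_\varepsilon\,\dist(y,z)$ for all $y,z\in\mathbb{T}^2$. Applying this along the flows, raising to the power $p$, using $\dist\le D:=\mathrm{diam}\,\mathbb{T}^2$, integrating, and invoking \textsc{Lemma \ref{lemma:flowsesti}} (whose constant is uniform in $s,t$) gives
\begin{displaymath}
\|\omega_0^\varepsilon\circ X^\alpha_{t,0}-\omega_0^\varepsilon\circ X_{t,0}\|_{L^p}\le L_\varepsilon\,D^{1-\frac1p}\Big(\int_{\mathbb{T}^2}\dist(X^\alpha_{t,0},X_{t,0})\,dx\Big)^{\frac1p}\le\frac{K(\delta,\omega_0)}{|\log\delta'|^{1/p}},
\end{displaymath}
uniformly in $t\in[0,T)$. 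Collecting the three contributions yields \eqref{eq:quantestcitar} (with the logarithmic factor carrying the exponent $1/p$, which is immaterial for the conclusion — this is the only point where the stated rate should be read up to that exponent). Finally, $u^\alpha\to u$ in $\mathcal{C}([0,T];L^2)$ by \textsc{Proposition \ref{prop1}} forces $\delta'\to0$, while $\|q_0^\alpha-\omega_0\|_{L^p}\to0$ by hypothesis; hence for fixed $\delta$ the right-hand side of \eqref{eq:quantestcitar} tends to $\delta$, so $\{q^\alpha\}$ is Cauchy in $\mathcal{C}([0,T];L^p)$, its limit there must coincide with $\omega$ by \eqref{eq:weakconvq}, and \eqref{eq:strongconvvort} follows.

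The main obstacle is the mere $L^p$-integrability of $\omega_0$: one cannot estimate $\|\omega_0\circ X^\alpha-\omega_0\circ X\|_{L^p}$ directly, which forces the mollification step; the delicate balance is that $\varepsilon$ must depend only on $\delta$ and $\omega_0$, never on $\alpha$, so that the Lipschitz cost $L_\varepsilon$ stays frozen while all the $\alpha$-dependence is confined to the vanishing factor $|\log\delta'|^{-1/p}$ supplied by \textsc{Lemma \ref{lemma:flowsesti}}. The measure-preserving property of the Lagrangian flows, guaranteed by \textsc{Definition \ref{def:lagflow}}, is what makes the $L^p$ norms of the transported quantities exactly computable and is used in every term.
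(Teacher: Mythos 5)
Your argument is essentially the paper's own proof: both replace $\omega_0$ by a Lipschitz approximation (the paper uses density of Lipschitz functions in $L^p$, you use a mollifier), exploit the measure-preserving property of both flows to reduce three of the terms to norms of the initial data, and control the remaining term via the Lipschitz constant together with the flow estimate of \textsc{Lemma \ref{lemma:flowsesti}}. The $1/p$ exponent on the logarithmic factor that you flag is indeed produced by this argument; the paper's write-up silently drops it, so your version is, if anything, the more careful one, and the exponent is harmless for the convergence statement.
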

\begin{proof}
We recall that Lipschitz function are dense in $L^p(\mathbb{T}^2)$, for every $p < \infty$. Thus, we take a sequence of Lipschitz function $\{\omega_0^j\}_{j \in \mathbb{N}}$, such that $\omega_0^j \rightarrow \omega$ as $j \rightarrow \infty$ in $L^p(\mathbb{T}^2)$. We notice
\begin{multline*}
    \|q^\alpha(t)-\omega(t)\|_{L^p} = \left(\int_{\mathbb{T}^2}|q_0^\alpha(X^\alpha_{t,0})-\omega_0(X_{t,0})|^pdx \right)^{\frac{1}{p}}\leq \\
    \left(\int_{\mathbb{T}^2}|\omega_0^j(X^\alpha_{t,0})-\omega_0(X^\alpha_{t,0})|^pdx \right)^{\frac{1}{p}}+\left(\int_{\mathbb{T}^2}|\omega_0^j(X_{t,0})-\omega_0(X_{t,0})|^pdx \right)^{\frac{1}{p}}+\\\left(\int_{\mathbb{T}^2}|\omega_0^j(X^\alpha_{t,0})-\omega_0^j(X_{t,0})|^pdx \right)^{\frac{1}{p}}+\left(\int_{\mathbb{T}^2}|q_0^\alpha(X^\alpha_{t,0})-\omega_0(X^\alpha_{t,0})|^pdx \right)^{\frac{1}{p}}.
\end{multline*}
Knowing that $\omega_0^j$ are Lipschitz and using the estimate on the flows \eqref{eq:flowestigiusta}, we deduce
\begin{equation}
    \|q^\alpha(t) -\omega(t) \|_{L^p} \leq 2\|\omega_0^j(t) -\omega_0(t)\|_{L^p}+\frac{K(\delta, \omega_0)}{ \left| \log\left(\|u^\alpha-u\|_{L^1_tL^1_x}\right)\right|}+\|q^\alpha_0(t) -\omega_0(t)\|_{L^p}. \label{eq:lip}
\end{equation}
If we send first $\alpha\rightarrow 0$ and then $j \rightarrow \infty$, by employing the strong convergence of $u$ in $L^\infty_t L^2_x$, thus in $L^1_t L^1_x$, we get that $q^\alpha \rightarrow \omega$ strongly in $\mathcal{C}_t L^p_x$. Given $\delta >0$, we can take the sequence $\{\omega_0^j\}_{j \in \mathbb{N}}$ such that $\delta \geq 2\|\omega_0^j(t) -\omega_0(t)\|_{L^p}$, for any $j \in \mathbb{N}$, and infer \eqref{eq:quantestcitar} from \eqref{eq:lip}.\\
The convergence \eqref{eq:strongconvvort} follows from $\| u^\alpha-u\|_{L^1_tL^1_x} \rightarrow 0$ as $\alpha \to 0$.
\end{proof}
This result completes the proof of \textsc{Theorem \ref{thm:strongconvvort}}.
\begin{comment}
\begin{remark}
We remark that the quantitative estimate does not display a rate of convergence dependent only on $\alpha$. Indeed, it is unlikely in this claim of regularity to obtain an estimate of the rate at which $\|u^\alpha-u\|_{L^1_tL^1_x}$ goes to zero.
\end{remark}
\end{comment}

\section{Rate of convergence for bounded vorticity}
\label{sec:chemin}
In this section, we analyse the rate of convergence for the initial vorticity in $L^\infty(\mathbb{T}^2)$. The main tool we employ is a standard generalization of Gronwall inequality, known as Osgood's lemma. We recall here the statement as it has been proven in \cite{cheminosgood}.
\begin{lemma}[Osgood Lemma]\label{lemma:osgood}
Let $\rho$ be a positive Borelian function, $\gamma$ a locally integrable positive function, $\mu$ a continuous increasing function and $\mathcal{M}(x) = \int_x^1\frac{dr}{\mu(r)}$. Let us assume that, for a strictly positive number $\eta$, the function $\rho$ satisfies
\begin{equation*}
    \rho(t) \leq \eta+\int_{t_0}^t \gamma(s)\mu(\rho(s))ds.
\end{equation*}
Then, we have
\begin{equation*}
    \mathcal{M}(\eta)-\mathcal{M}(\rho(t))\leq\int_{t_0}^t\gamma(s)ds.
\end{equation*}
\end{lemma}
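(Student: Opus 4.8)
The plan is to convert the integral inequality into a separable ordinary differential inequality for the smallest majorant of $\rho$ and then integrate it explicitly. First I would introduce
\[
R(t) := \eta + \int_{t_0}^t \gamma(s)\,\mu(\rho(s))\,ds, \qquad t \ge t_0,
\]
which is precisely the quantity that dominates $\rho$. Since $\rho$ is Borelian and $\mu$ is continuous, $s \mapsto \gamma(s)\,\mu(\rho(s))$ is measurable and locally integrable (this is implicit in the meaning of the hypothesis), so $R$ is locally absolutely continuous, nondecreasing (as $\gamma,\mu>0$), and satisfies $R(t_0)=\eta>0$, $\rho(t)\le R(t)$ for all $t\ge t_0$, and $R'(t)=\gamma(t)\,\mu(\rho(t))$ for a.e.\ $t$. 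Because $\mu$ is increasing and $\rho(t)\le R(t)$, this yields the pointwise a.e.\ differential inequality $R'(t) \le \gamma(t)\,\mu(R(t))$.

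Next I would differentiate the composition $\mathcal{M}\circ R$. The function $\mathcal{M}$ is $C^1$ on $(0,\infty)$ with $\mathcal{M}'(x)=-1/\mu(x)$, and on every compact subinterval of $(0,\infty)$ it is Lipschitz, since there $1/\mu$ is bounded. As $R$ takes values in $[\eta,R(t)]\subset(0,\infty)$ on $[t_0,t]$, the map $s\mapsto\mathcal{M}(R(s))$ is absolutely continuous and the chain rule holds a.e., giving
\[
\frac{d}{ds}\,\mathcal{M}(R(s)) = -\frac{R'(s)}{\mu(R(s))} \ge -\gamma(s).
\]
Integrating over $[t_0,t]$ and using $R(t_0)=\eta$ then gives $\mathcal{M}(R(t)) - \mathcal{M}(\eta) \ge -\int_{t_0}^t \gamma(s)\,ds$.

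Finally I would transfer the estimate from $R$ back to $\rho$: since $\mu>0$, the function $\mathcal{M}$ is strictly decreasing, so $\rho(t)\le R(t)$ forces $\mathcal{M}(\rho(t))\ge\mathcal{M}(R(t))$, whence
\[
\mathcal{M}(\eta) - \mathcal{M}(\rho(t)) \le \mathcal{M}(\eta) - \mathcal{M}(R(t)) \le \int_{t_0}^t \gamma(s)\,ds,
\]
which is exactly the asserted conclusion. The only genuinely delicate point is the measure-theoretic bookkeeping in the middle step: justifying that $\mathcal{M}\circ R$ is absolutely continuous so that the fundamental theorem of calculus and the chain rule apply, even though $\rho$ is merely Borel. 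This is exactly why it matters that $R$ stays bounded away from $0$ and from $\infty$ on bounded time intervals, where $\mathcal{M}$ is Lipschitz; everything else is a routine separation of variables.
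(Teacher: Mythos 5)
Your proof is correct and is the standard separation-of-variables argument for Osgood's lemma: majorize $\rho$ by the absolutely continuous, nondecreasing function $R$, differentiate $\mathcal{M}\circ R$ using that $\mathcal{M}$ is Lipschitz on the range of $R$, and transfer back to $\rho$ via the monotonicity of $\mathcal{M}$. The paper does not prove this lemma but quotes it from the cited reference, where essentially this same argument appears, so there is nothing to flag.
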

\subsection{Rate of convergence for the velocities}
As a first step to prove \textsc{Theorem \ref{thm:rate}}, we use a technique introduced by Chemin in \cite{chemin}  to control the convergence of the velocities, employing \textsc{Lemma \ref{lemma:osgood}}.
\begin{prop}\label{thm:chemin}
Let $q^\alpha_0$ and $\omega_0$ be under the assumptions of \textsc{Theorem \ref{thm:rate}}. Let $u^\alpha_0 :=(\identity- \alpha\lapl)^{-1} k \ast q^\alpha_0$ and $u_0 :=k \ast \omega_0$, then
\begin{equation*} 
	\gamma_0^\alpha:=\|u^\alpha_0-u_0\|_{L^2} +\alpha \|\lapl u^\alpha_0\|_{L^2}\xrightarrow{\alpha \to 0}  0 \quad \text{and let} \quad \overline{\alpha}>0\quad \text{be s.t.}\quad \gamma_0^\alpha \leq \frac{1}{2},\quad \forall \alpha\leq \overline{\alpha}.
\end{equation*}
Let $(u^\alpha, q^\alpha)$ be the solution to the $\alpha$-Euler equations with initial datum $q^\alpha_0$ and let $(u, \omega)$ be its limit, which is the unique Yudovich solution to the Euler equations. Then, there exist two constants $C_1$ and $C_2$ (depending on $M:=\|\omega_0\|_{L^\infty}$) such that, if $\alpha\in (0, \overline{\alpha}]$ and $T>0$ satisfy
\begin{displaymath} 
	\alpha \leq \frac{\exp\{ 2(2- 2 \exp(C_2T))\}-\gamma_0^\alpha}{(C_1T)^2}, 
\end{displaymath}
it holds
\begin{displaymath} 
	\|u(t)-u^\alpha(t)\|_{L^2}\leq \exp\{ 2- 2 \exp(-C_2t)\}(C_1\sqrt{\alpha}T+\gamma^\alpha_0)^{\exp(-C_2t)}+ C \sqrt{\alpha}: = K(\alpha,t), \quad \forall t \leq T.
\end{displaymath} 
\end{prop}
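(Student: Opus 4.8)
The plan is to run Chemin's log‑Lipschitz/Osgood argument on the Helmholtz‑regularised difference $r^\alpha := v^\alpha - u$ (equivalently, a relative‑energy quantity), treating the genuinely $\alpha$‑dependent pieces of the system as a forcing of size $O(\sqrt\alpha)$.

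\textbf{Step 1: uniform bounds.} Since $q^\alpha$ is transported by the smooth divergence‑free flow of $u^\alpha$, all $L^s$ norms of $q^\alpha$ are conserved, so they stay bounded by a constant $C_M$ depending only on $M=\|\omega_0\|_{L^\infty}$. Combining \eqref{eq:w1pbound}, \eqref{eq:calderonzyg}, Lemma \ref{lemma:iftimie} (case $p\ge 2$) and the conservation \eqref{eq:consalphanorm} of the $\alpha$‑norm, I would record $\sqrt\alpha\,\|\lapl u^\alpha(t)\|_{L^2}\le C_M$, $\alpha\|\nabla u^\alpha(t)\|_{L^2}^2\le C_M$, and $\alpha\|\lapl u^\alpha(t)\|_{L^\infty}=\|u^\alpha(t)-v^\alpha(t)\|_{L^\infty}\le C_M$; moreover $\|u(t)\|_{L^\infty}+\|u^\alpha(t)\|_{L^\infty}+\|v^\alpha(t)\|_{L^\infty}\le C_M$ (Biot–Savart of $L^\infty$ vorticity, and $(\idnt-\alpha\lapl)^{-1}$ is a contraction on $L^\infty$), hence $\|r^\alpha(t)\|_{L^\infty}\le C_M$ uniformly in $t\le T$ and $\alpha$. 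Finally $\|r^\alpha(0)\|_{L^2}=\|v^\alpha_0-u_0\|_{L^2}\le\|u^\alpha_0-u_0\|_{L^2}+\alpha\|\lapl u^\alpha_0\|_{L^2}=\gamma_0^\alpha$.

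\textbf{Step 2: energy estimate.} Subtract the Euler momentum equation from the $\alpha$‑Euler momentum equation and test the difference against $r^\alpha$, which is divergence free. Using $\diver u^\alpha=\diver r^\alpha=0$, $\sum_j u_j\nabla u_j=\nabla|u|^2/2$ and the identity $u^\alpha-u=r^\alpha+\alpha\lapl u^\alpha$, the time derivative gives $\tfrac12\frac{d}{dt}\|r^\alpha\|_{L^2}^2$, the transport term $u^\alpha\!\cdot\!\nabla r^\alpha$ and both pressures drop, and the remaining quadratic terms organise (after symmetrising the $\sum_j v^\alpha_j\nabla u^\alpha_j$ contribution and integrating by parts so that no more than one derivative ever falls on a rough factor) into: "Chemin" contributions of the form $\int(r^\alpha\!\cdot\!\nabla)u\cdot r^\alpha$ and $\int(r^\alpha\!\cdot\!\nabla)u^\alpha\cdot r^\alpha$, bounded for every $p\in(2,\infty)$ by $C(\|\nabla u\|_{L^p}+\|\nabla u^\alpha\|_{L^p})\|r^\alpha\|_{L^{2p'}}^2\le CpM\|r^\alpha\|_{L^{2p'}}^2$ thanks to \eqref{eq:calderonzyg} with $C_p\sim p$ and Step 1; and remainder terms, each carrying a factor $\alpha\lapl u^\alpha$, of the type $\alpha\int(\lapl u^\alpha\!\cdot\!\nabla)u\cdot r^\alpha$, which I would estimate by Hölder together with $\|\nabla u\|_{L^q}\le CqM$, the interpolation $\|r^\alpha\|_{L^q}\le\|r^\alpha\|_{L^2}^{1-2/q}\|r^\alpha\|_{L^\infty}^{2/q}$ and Step 1 to obtain $C_M\sqrt\alpha\,q\,\|r^\alpha\|_{L^2}^{1-2/q}$. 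Interpolating the Chemin terms likewise and optimising the free exponents ($p,q\sim\log(1/\|r^\alpha\|_{L^2})$) turns the whole right‑hand side into the log‑Lipschitz modulus; after discarding the trivial regime $\|r^\alpha\|_{L^2}\le C\sqrt\alpha$ this yields
\[
\frac{d}{dt}\|r^\alpha(t)\|_{L^2}\;\le\; C_2\,\mu\bigl(\|r^\alpha(t)\|_{L^2}\bigr)\;+\;C_1\sqrt\alpha,\qquad \mu(s)=s\bigl(1-\tfrac12\log s\bigr),
\]
with $C_1,C_2$ depending only on $M$.

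\textbf{Step 3: Osgood and conclusion.} Integrating on $[0,t]$ with $t\le T$ gives $\|r^\alpha(t)\|_{L^2}\le(\gamma_0^\alpha+C_1\sqrt\alpha\,T)+C_2\int_0^t\mu(\|r^\alpha(s)\|_{L^2})\,ds$, so Lemma \ref{lemma:osgood} applies with $\eta=\gamma_0^\alpha+C_1\sqrt\alpha\,T$, $\gamma\equiv C_2$ and $\mathcal M(x)=\int_x^1 ds/\mu(s)=2\log(1-\tfrac12\log x)$; condition \eqref{eq:relalfaT} guarantees $\eta<1$, hence $\mathcal M(\eta)$ is finite, and solving $\mathcal M(\eta)-\mathcal M(\|r^\alpha(t)\|_{L^2})\le C_2 t$ for $\|r^\alpha(t)\|_{L^2}$ reproduces exactly $\|r^\alpha(t)\|_{L^2}\le\exp\{2-2e^{-C_2 t}\}(C_1\sqrt\alpha T+\gamma_0^\alpha)^{e^{-C_2 t}}$ (up to relabelling $C_2$). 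Finally $\|u(t)-u^\alpha(t)\|_{L^2}\le\|r^\alpha(t)\|_{L^2}+\|u^\alpha(t)-v^\alpha(t)\|_{L^2}=\|r^\alpha(t)\|_{L^2}+\alpha\|\lapl u^\alpha(t)\|_{L^2}\le\|r^\alpha(t)\|_{L^2}+C\sqrt\alpha$ by Lemma \ref{lemma:iftimie}, which is $K(\alpha,t)$.

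\textbf{Main obstacle.} The delicate point is Step 2, and specifically the remainder terms. The $\alpha$‑Euler velocity is only controlled in $W^{3,p}$ with the $\alpha$‑degenerate rate \eqref{eq:w3pbound}, so a priori these terms look merely of order $1$; the $\sqrt\alpha$ gain has to be extracted by trading one power of $\alpha\lapl u^\alpha$ against the $L^2$‑bound of Lemma \ref{lemma:iftimie} (via $\alpha\|\lapl u^\alpha\|_{L^2}=\sqrt\alpha\cdot\sqrt\alpha\|\lapl u^\alpha\|_{L^2}\le C_M\sqrt\alpha$), and this must be done while keeping the estimate compatible with the log‑Lipschitz structure — never letting more than one derivative fall on the rough factors $u,r^\alpha$, and pairing the surviving rough derivative of $u$ with an $L^q$ norm whose $q$‑growth is still absorbed when one optimises to produce $\mu$. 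Once the differential inequality is in the displayed form, the Osgood step is Chemin's classical argument and is routine.
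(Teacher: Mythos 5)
Your proposal follows essentially the same route as the paper: you work with the relative quantity $v^\alpha-u$, run the Chemin--Yudovich $L^p$ trick with the Calderon--Zygmund constant $C_p\sim p$, extract the $O(\sqrt{\alpha})$ forcing from the $\alpha\lapl u^\alpha$ terms via Lemma \ref{lemma:iftimie}, close with Osgood's lemma, and recover $u^\alpha$ from $v^\alpha$ by the triangle inequality $\alpha\|\lapl u^\alpha\|_{L^2}\le C\sqrt{\alpha}$. The only differences are cosmetic: the paper bounds the remainder terms directly by $C\sqrt{\alpha}$ using $\|v^\alpha-u\|_{L^\infty}\le C_M$ (no extra interpolation or exponent optimization needed there, which also avoids your ``discard the trivial regime'' step), and it handles the choice $p=2-\log z$ rigorously by first replacing $z^\alpha$ with $z^\alpha+\delta$ to keep the argument of the logarithm in $(0,1]$.
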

\begin{proof}
We observe that considering the assumptions of \textsc{Theorem \ref{thm:rate}}, we have that \textsc{Theorem \ref{thm:strongconvvort}} holds true for every $p < \infty$. Therefore, we know that $q^\alpha$ converges to $\omega$ in $\mathcal{C}([0,T]; L^p(\mathbb{T}^2))$, for every $p< \infty$ and it is uniformly bounded in $L^\infty((0,T) \times \mathbb{T}^2)$. Hence, we deduce that $\omega \in L^\infty((0,T) \times \mathbb{T}^2)$, which implies that $u$ is the unique Yudovich solution to the Euler equations (cf. \cite{yudo}). Now, set
\begin{equation}
    y^\alpha := v^\alpha -u \quad \text{and} \quad z^\alpha(t) = \| y^\alpha(t,\cdot)\|_{L^2}.\label{eq:defche}
\end{equation}
Let us take the difference between the Euler equations and the $\alpha$-Euler equations and test it against $y^\alpha$. Summing the terms $\pm \int_{\mathbb{T}^2}v^\alpha\cdot \nabla u\cdot y^\alpha dx$, we obtain
\begin{multline*}
    \frac{1}{2} \totdertime\|y^\alpha\|^2=\int_{\mathbb{T}^2}(v^\alpha\cdot \nabla) u\cdot y^\alpha dx-\int_{\mathbb{T}^2}(u^\alpha \cdot \nabla) v^\alpha \cdot y^\alpha dx -\int_{\mathbb{T}^2}\sum_{j=1,2}v_j^\alpha\partial_i u_j^\alpha y_i^\alpha dx\\ - \int_{\mathbb{T}^2}(v^\alpha\cdot \nabla) u\cdot y^\alpha dx+\int_{\mathbb{T}^2}(u \cdot \nabla )u \cdot y^\alpha dx.
\end{multline*}
Using the definitions of $v^\alpha$ and $y^\alpha$, after some computations, we infer
\begin{multline*}
		\frac{1}{2} \totdertime\|y^\alpha\|^2=-\alpha\int_{\mathbb{T}^2}(\lapl u^\alpha\cdot \nabla) u\cdot y^\alpha dx-\int_{\mathbb{T}^2} (u^\alpha\cdot \nabla) y^\alpha\cdot y^\alpha dx\\-\int_{\mathbb{T}^2}\sum_{j=1,2}u_j^\alpha\partial_i u_j^\alpha y_i^\alpha dx+\alpha\int_{\mathbb{T}^2}\sum_{j=1,2}\lapl u_j^\alpha\partial_i u_j^\alpha y_i^\alpha dx - \int_{\mathbb{T}^2}( y^\alpha \cdot \nabla) u\cdot y^\alpha dx.
\end{multline*}
Owing to the incompressibility constrains, we cancel out some terms and we obtain
\begin{equation}
	\frac{1}{2} \totdertime\|y^\alpha\|^2=-\alpha\int_{\mathbb{T}^2}(\lapl u^\alpha\cdot \nabla) u\cdot y^\alpha dx+\alpha\int_{\mathbb{T}^2}\sum_{j=1,2}\lapl u_j^\alpha\partial_i u_j^\alpha y_i^\alpha dx - \int_{\mathbb{T}^2} (y^\alpha \cdot \nabla) u\cdot y^\alpha dx.
\label{eq:conticorrchemin}
\end{equation}
We first control the last term in the right hand side of \eqref{eq:conticorrchemin}
with a technique analogous to the one used in \cite{chemin}. Using Hölder inequality and \eqref{eq:calderonzyg}, we get
\begin{equation}
   \int_{\mathbb{T}^2} (y^\alpha \cdot \nabla) u\cdot y^\alpha dx\leq\|\nabla u\|_{L^p}\left(\int_{\mathbb{T}^2}|y^\alpha|^{\frac{2p}{p-1}}dx\right)^{\frac{p-1}{p}}\leq C p\left(\int_{\mathbb{T}^2}|y^\alpha|^{\frac{2p}{p-1}}dx\right)^{\frac{p-1}{p}}.\label{eq:y1}
\end{equation}
We observe that $y^\alpha=-k \ast(\omega-q^\alpha)$. By Calderon-Zygmund inequality, for $q >2$, we have
\begin{equation}
	\|y^\alpha\|_{L^\infty}\leq C\|y^\alpha\|_{W^{1,q}} \leq C\|\omega-q^\alpha\|_{L^q} \leq C \|\omega_0\|_{L^q}\leq C.\label{eq:y3}
\end{equation}
The right hand side of \eqref{eq:y1} is controlled as
\begin{equation}
    \left(\int_{\mathbb{T}^2}|y^\alpha|^{\frac{2p}{p-1}}dx\right)^{\frac{p-1}{p}}= \left(\int_{\mathbb{T}^2}|y^\alpha|^2|y^\alpha|^{\frac{p}{p-1}}dx\right)^{\frac{p-1}{p}}\leq \|y^\alpha\|_{L^2}^{2\frac{p-1}{p}} \|y^\alpha\|_{L^\infty}^{\frac{2}{p}}\leq C\|y^\alpha\|_{L^2}^{2\frac{p-1}{p}}.\label{eq:y2}
\end{equation}
We are left to control the first two terms in the right hand side of \eqref{eq:conticorrchemin}. By \textsc{Lemma \ref{lemma:iftimie}}, we deduce $\sqrt{\alpha} \|\lapl u^\alpha\|_{L^2} \leq C \|q^\alpha_0\|_{L^2}$. Exploiting this fact, we control the first two terms in the right hand side of \eqref{eq:conticorrchemin} with some applications of Hölder inequality and we deduce
\begin{multline}
  \alpha \left|\int_{\mathbb{T}^2}(\lapl u^\alpha\cdot \nabla) u\cdot y^\alpha dx\right|+\alpha\left|\int_{\mathbb{T}^2}\sum_{j=1,2}\lapl u_j^\alpha\partial_i u_j^\alpha y_i^\alpha dx\right|\\ \leq \sqrt{\alpha}(\sqrt{\alpha}\|\lapl u^\alpha\|_{L^2})(\|\nabla u\|_{L^{2}}+\|\nabla u^\alpha\|_{L^{2}})\| y^\alpha\|_{L^{\infty}}\leq C \sqrt{\alpha},\label{eq:boundterminalfa}
\end{multline}
where we have used \eqref{eq:y3}.
Finally, we substitute back into \eqref{eq:conticorrchemin} the estimates   \eqref{eq:y1}-\eqref{eq:boundterminalfa} and \eqref{eq:y2} and by definition \eqref{eq:defche}, we obtain the ordinary differential inequality
\begin{equation}
    z^\alpha(t) '\leq C p (z^\alpha(t))^{\left(1-\frac{1}{p}\right)}+C\sqrt{\alpha}, \quad \forall p\geq 2.\label{eq:odiosgood}
\end{equation}
Thanks to \eqref{eq:newcit1}-\eqref{eq:newcit2}-\eqref{eq:laplpinit}, we deduce \eqref{eq:closeenough}, namely
\begin{displaymath}
	\gamma_0^\alpha:= \|u^\alpha_0 -u_0\|_{L^2}+\alpha\|\lapl u^\alpha_0\| \rightarrow 0,\quad \text{as}\quad \alpha \to 0.
\end{displaymath}
Thus, let $\overline{\alpha}$ be such that $\gamma_0^\alpha\leq 1/2$, for every $\alpha\leq \overline{\alpha}$.
 Hereafter until the end of the proof, we consider $\alpha \in (0, \overline{\alpha}]$ and an application of triangular inequality yields
\begin{equation}
z^\alpha(0) \leq \gamma_0^\alpha\leq \frac{1} {2}.\label{eq:initzalpha}
\end{equation}
We want to apply a substitution for which we need $z^\alpha(t) \in (0,1]$. We know that $z^\alpha(0)\leq 1/2$ by \eqref{eq:initzalpha} and we define a ``modified" $z^\alpha$ as
\begin{equation*}
    z^\alpha_\delta(t) := z^\alpha(t)+\delta \quad \text{where} \enskip \delta\in \left(0, \frac{1}{2}\right]\quad \text{such that} \quad z^\alpha_\delta(0)<1\quad \text{and} \quad z^\alpha_\delta(t)>0, \enskip \forall t \geq 0.
\end{equation*}
From this definition, it is easy to check that \eqref{eq:odiosgood} still holds for $z^\alpha_\delta$, namely
\begin{equation}
    z^\alpha_\delta(t) '\leq C p (z^\alpha_\delta(t))^{\left(1-\frac{1}{p}\right)}+C\sqrt{\alpha}, \quad \forall p\geq 2.\label{eq:odiosgooddelta}
\end{equation}
We notice that \eqref{eq:odiosgooddelta} holds for any $p \geq 2$, therefore we consider $p = 2-\log(z^\alpha_\delta(t))$, which is well defined thanks to $z^\alpha_\delta(t) \in (0,1]$. We infer with some simple computations
\begin{equation*}
      (z^\alpha_\delta)'\leq C (2-\log(z^\alpha_\delta)) z^\alpha_\delta+C\sqrt{\alpha},
\end{equation*}
thanks to $(z^\alpha_\delta)^{-\frac{1}{\log(z^\alpha_\delta)}}= e^{-1}$. Integrating over time, we obtain
\begin{equation*}
 z^\alpha_\delta(t) -z^\alpha_\delta(0)\leq C_1\sqrt{\alpha}T+\int_0^tC_2 (2-\log(z^\alpha_\delta(s))) z^\alpha_\delta(s)ds, \qquad \forall t \in (0,T].
\end{equation*}
We apply \textsc{Lemma \ref{lemma:osgood}} with $\mu(x): = x(2-\log(x))$, $\mathcal{M}(x) := \log(2-\log(x))-\log2$, $\eta := C_1\sqrt{\alpha}T+z^\alpha_\delta(0)$ and $\gamma(s):= C_2$, yielding
\begin{equation*}
    -\log(2-\log(z^\alpha_\delta (t) )+\log(2-\log(C_1\sqrt{\alpha}+z^\alpha_\delta(0))) \leq C_2 t,
\end{equation*}
which implies
\begin{equation}
    z^\alpha_\delta (t) \leq \exp\{ 2- 2 \exp(-C_2t)\}(C_1\sqrt{\alpha}T+z^\alpha_\delta(0))^{\exp(-C_2t)}.\label{eq:thesischemin}
\end{equation}
We recall that we required to start with an initial datum such that $z^\alpha_\delta(0) <1$ and the inequality \eqref{eq:thesischemin} holds as long as $z^\alpha_\delta(t)\leq 1$. This statement holds true as long as we consider
\begin{equation*}
 \alpha \leq \frac{\exp\{2( 2- 2 \exp(C_2T))\}-\gamma_0^\alpha}{(C_1T)^2}.
\end{equation*}
We can pass to the limit as $\delta \to 0$ and deduce that \eqref{eq:thesischemin} still holds if we take $z^\alpha(t)$ in place of $z^\alpha_\delta(t)$. We complete the proof of the theorem through an application of triangular inequality, yielding
\begin{multline*}
    \|u-u^\alpha\|_{L^2} \leq \|u-v^\alpha\|_{L^2} +\|v^\alpha-u^\alpha\|_{L^2} \leq z^\alpha +\alpha \|\lapl u^\alpha\|_{L^2}\\
    \leq \exp\{ 2- 2 \exp(-C_2t)\}(C_1\sqrt{\alpha}T+\gamma_0^\alpha)^{\exp(-C_2t)}+ C \sqrt{\alpha},
\end{multline*}
where we have used \eqref{eq:thesischemin} and \textsc{Lemma \ref{lemma:iftimie}}.
\end{proof}
\subsection{Rate of convergence for the vorticities}
In this paragraph, we conclude the proof of \textsc{Theorem \ref{thm:rate}}. We use the ``Chemin-type" estimate given by \textsc{Proposition \ref{thm:chemin}} to control the rate of convergence of the flows. This allows us to bound the rate of convergence of the vorticities using the definition of Lagrangian solution.
\begin{prop}\label{prop:thm2second}
		Let $\omega_0$ and $q^\alpha_0$ be under the assumptions of \textsc{Theorem \ref{thm:rate}}, let $(u^\alpha, q^\alpha)$ be the solution to the $\alpha$-Euler equations with initial datum $q^\alpha_0$ and let $(u, \omega)$ be its limit, which is the unique Yudovich solution to the Euler equations.\\ Then, there exists a value $\alpha_0 = \alpha_0(T,M, \omega_0)$ and a continuous function $\psi_{\omega_0,p,M}: \mathbb{R}^+\rightarrow\mathbb{R}^+$ vanishing at zero, such that for every $\alpha \leq \alpha_0$ holds
	\begin{equation*}
		\sup_{t \in (0,T)}\|q^\alpha(t) -\omega(t)\|_{L^p}\leq C M^{1-\frac{1}{p}}\max\left\{ \psi_{\omega_0,p,M}(K(\alpha,t)),K(\alpha,t)^{\frac{\exp{(-CT)}}{2p}} \right\},
	\end{equation*}
where $C$ depends on $M$.
\end{prop}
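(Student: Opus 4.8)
The plan is to argue entirely at the Lagrangian level, as in the proof of \textsc{Proposition \ref{prop2}}, replacing the qualitative flow bound of \textsc{Lemma \ref{lemma:flowsesti}} by a \emph{quantitative} one obtained by feeding the Chemin-type velocity estimate of \textsc{Proposition \ref{thm:chemin}} into Osgood's lemma (\textsc{Lemma \ref{lemma:osgood}}). First I would record the extra structure available here. By \textsc{Proposition \ref{thm:chemin}}, $\omega\in L^\infty((0,T)\times\mathbb{T}^2)$, and since $\omega(t)=\omega_0\circ X_{t,0}$ with $X_{t,0}$ measure preserving, $\|\omega(t)\|_{L^\infty}=\|\omega_0\|_{L^\infty}=M$ for a.e.\ $t$; hence $u(t)=k\ast\omega(t)$ is log-Lipschitz uniformly in time,
\[
|u(t,x)-u(t,y)|\le CM\,\dist(x,y)\bigl(1+|\log\dist(x,y)|\bigr),\qquad x,y\in\mathbb{T}^2 .
\]
Since $u^\alpha$ is smooth (\textsc{Theorem \ref{thm:wpalpha}}) and $u$ is log-Lipschitz, both fields admit measure preserving flows $X^\alpha_{t,s}$ and $X_{t,s}$ in the sense of \eqref{eq:flowode} and \textsc{Definition \ref{def:lagflow}}, and $q^\alpha(t)=q_0^\alpha\circ X^\alpha_{t,0}$, $\omega(t)=\omega_0\circ X_{t,0}$.

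\emph{Step 1: a quantitative estimate on the flows.} Fix $t\in(0,T]$ and set $\ell(s):=\int_{\mathbb{T}^2}\dist\bigl(X^\alpha_{t,s}(x),X_{t,s}(x)\bigr)\,dx$, so $\ell(t)=0$. Differentiating in $s$ and inserting $\pm u(s,X^\alpha_{t,s})$, I would estimate the first piece, using that $x\mapsto X^\alpha_{t,s}(x)$ is measure preserving and $\|\cdot\|_{L^1(\mathbb{T}^2)}\le\|\cdot\|_{L^2(\mathbb{T}^2)}$, by $\|u^\alpha(s)-u(s)\|_{L^2}\le K(\alpha,s)$ from \textsc{Proposition \ref{thm:chemin}}, and the second piece, using the log-Lipschitz bound above and Jensen's inequality, by $CM\,\mu(\ell(s))$ with $\mu(r):=r(1-\log r)$. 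Since $s\mapsto K(\alpha,s)$ is increasing, this yields
\[
\ell(s)\le T\,K(\alpha,t)+CM\int_s^t\mu\bigl(\ell(\tau)\bigr)\,d\tau ,\qquad 0\le s\le t .
\]
Applying \textsc{Lemma \ref{lemma:osgood}} with this $\mu$, $\gamma\equiv CM$ and $\eta:=T\,K(\alpha,t)$ (which forces the smallness condition $\eta<1$ together with a continuation argument keeping $\ell\le1$ on $[0,t]$, thereby fixing $\alpha_0=\alpha_0(T,M,\omega_0)$), and using $\mathcal{M}(x)=\int_x^1\frac{dr}{r(1-\log r)}=\log(1-\log x)$, I obtain
\[
\int_{\mathbb{T}^2}\dist\bigl(X^\alpha_{t,0}(x),X_{t,0}(x)\bigr)\,dx\le\exp\bigl(1-e^{-CMt}\bigr)\bigl(T\,K(\alpha,t)\bigr)^{e^{-CMt}}=:\Lambda(\alpha,t)\le C\,K(\alpha,t)^{e^{-CMT}} .
\]

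\emph{Step 2: from the flows to the vorticity.} Since $X^\alpha_{t,0}$ and $X_{t,0}$ are measure preserving,
\[
\|q^\alpha(t)-\omega(t)\|_{L^p}\le\|q_0^\alpha-\omega_0\|_{L^p}+\bigl\|\omega_0\circ X^\alpha_{t,0}-\omega_0\circ X_{t,0}\bigr\|_{L^p}.
\]
For the second term I would mollify, $\omega_0^\varepsilon:=\omega_0\ast\rho_\varepsilon$, use measure preservation for the two outer differences, and combine $\|\nabla\omega_0^\varepsilon\|_{L^\infty}\le CM/\varepsilon$, $\|\omega_0^\varepsilon\|_{L^\infty}\le M$, the elementary bound $\min\{a,b\}^p\le a^{p-1}b$, and Step 1 to get
\[
\bigl\|\omega_0\circ X^\alpha_{t,0}-\omega_0\circ X_{t,0}\bigr\|_{L^p}\le 2\|\omega_0-\omega_0^\varepsilon\|_{L^p}+CM^{1-\frac1p}\Bigl(\frac{M\,\Lambda(\alpha,t)}{\varepsilon}\Bigr)^{\frac1p}.
\]
Choosing $\varepsilon$ proportional to $M\sqrt{\Lambda(\alpha,t)}$ and bounding a sum of two nonnegative quantities by twice their maximum, the second summand becomes $\le CM^{1-\frac1p}\Lambda(\alpha,t)^{1/(2p)}\le CM^{1-\frac1p}K(\alpha,t)^{e^{-CMT}/(2p)}$, while the first, $2\|\omega_0-\omega_0\ast\rho_{cM\sqrt{\Lambda(\alpha,t)}}\|_{L^p}$, is bounded by $CM^{1-\frac1p}\psi_{\omega_0,p,M}\bigl(K(\alpha,t)\bigr)$ for a continuous $\psi_{\omega_0,p,M}$ vanishing at the origin (it is, up to the fixed power $\Lambda\le CK^{e^{-CMT}}$, the $L^p$ modulus of continuity of $\omega_0$ — the $M$-dependence of $\psi$ enters through the mollification scale). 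The leftover term $\|q_0^\alpha-\omega_0\|_{L^p}$ tends to $0$ by the assumptions of \textsc{Theorem \ref{thm:rate}} and is absorbed exactly as in \textsc{Proposition \ref{prop2}}; alternatively, since $q_0^\alpha-\omega_0=\rotore\,k\ast(q_0^\alpha-\omega_0)$ is bounded in $L^\infty$, Gagliardo–Nirenberg gives $\|q_0^\alpha-\omega_0\|_{L^p}\le CM^{1-\frac1p}\|k\ast(q_0^\alpha-\omega_0)\|_{L^2}^{1/p}\le CM^{1-\frac1p}(\gamma_0^\alpha)^{1/p}$, which is dominated by $K(\alpha,t)^{e^{-CMT}/(2p)}$ because $\gamma_0^\alpha\le K(\alpha,t)$. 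Collecting the three contributions gives the claimed bound for each $t\in(0,T)$, and taking the supremum (using that $K(\alpha,\cdot)$ is increasing) concludes.

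\emph{Main obstacle.} The heart of the matter is Step 1: upgrading the $L^2$-in-space velocity rate of \textsc{Proposition \ref{thm:chemin}} into a rate for the Lagrangian flows. This is possible only because $u$ is uniformly log-Lipschitz in the Yudovich class, and it is precisely the Osgood integration of the resulting differential inequality that produces the (structurally unavoidable) super-exponentially small exponent $e^{-CT}$ in the final estimate. The remaining difficulties are bookkeeping: choosing the mollification scale $\varepsilon$ in Step 2 so that the two error contributions combine into the stated $\max$, verifying that $\|\omega_0-\omega_0\ast\rho_{\sqrt{\Lambda}}\|_{L^p}$ genuinely defines a modulus $\psi_{\omega_0,p,M}$ of the required type, and running the continuation argument that keeps $\ell$ in the region where $\mu(r)=r(1-\log r)$ is the correct concave log-Lipschitz modulus, which is what pins down $\alpha_0$.
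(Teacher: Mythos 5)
Your proposal is correct and follows essentially the same two-step strategy as the paper: a quantitative flow estimate obtained by closing an Osgood inequality with the velocity rate $K(\alpha,t)$ from \textsc{Proposition \ref{thm:chemin}}, followed by a regularization of $\omega_0$ at a scale tuned to the flow distance. The only (cosmetic) differences are that the paper runs Osgood on $\int|X^\alpha_{t,s}-X_{t,s}|^2\,dx$ using the maximal-function inequality \eqref{eq:maxine} together with the Calder\'on--Zygmund bound $\|\nabla u\|_{L^p}\le CpM$ and the choice $p=2-\log y$ (rather than your pointwise log-Lipschitz bound plus Jensen on the $L^1$ flow distance), and that it uses the abstract $L^1$ modulus of continuity of $\omega_0$ with a Chebyshev splitting instead of your explicit mollification; both variants yield the same exponents. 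One side remark of yours is wrong, though: the ``alternative'' bound $\|q_0^\alpha-\omega_0\|_{L^p}\le CM^{1-\frac1p}\|k\ast(q_0^\alpha-\omega_0)\|_{L^2}^{1/p}$ is not a valid Gagliardo--Nirenberg inequality (test it on $g=\sin(Nx_1)$, for which $\|k\ast g\|_{L^2}\sim N^{-1}\to 0$ while $\|g\|_{L^p}\sim 1$); the initial-datum term has to be handled as an additional vanishing contribution, exactly as in \textsc{Proposition \ref{prop2}} and as in the paper's own estimate \eqref{eq:citareee}.
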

\begin{proof}\textsc{\textbf{Step 1}: Quantitative estimate for the flows.}
Let $X^\alpha_{t,s}$ and $X_{t,s}$ be the two Lagrangian flows relative to $u^\alpha$ and $u$. Subtracting the two respective definitions of the flows, multiplying them by $X^\alpha_{t,s}-X_{t,s}$ and integrating over time, we obtain 
\begin{equation}
    \frac{|X^\alpha_{t,s}-X_{t,s}|^2}{2} = \int_t^s\left( u^\alpha(\tau,X^\alpha_{t, \tau})-u(\tau,X_{t, \tau}) \right) \left( X^\alpha_{t,\tau}-X_{t,\tau}\right)d \tau.\label{eq:citaosgood1}
\end{equation}
We proceed by summing to the right hand side $\pm u(\tau, X^\alpha_{t, \tau})$ and we infer
\begin{multline*}
   \int_t^s \left|\left( u^\alpha(\tau,X^\alpha_{t, \tau})-u(\tau,X_{t, \tau}) \right) \left( X^\alpha_{t,\tau}-X_{t,\tau}\right) \right|d \tau \leq\\ \int_t^s\left|\left( u^\alpha(\tau,X^\alpha_{t, \tau})-u(\tau,X^\alpha_{t, \tau})\right) \left( X^\alpha_{t,\tau}-X_{t,\tau}\right)\right|d \tau +\int_t^s\left| \left(u(\tau,X^\alpha_{t, \tau})-u(\tau,X^\alpha_{t, \tau}) \right) \left( X^\alpha_{t,\tau}-X_{t,\tau}\right)\right|d \tau.
\end{multline*}
Integrating over the torus, we bound the first term owing to the measure preserving property of $X^\alpha_{t,\tau}$ and using Young inequality. The second term is bounded thanks to \eqref{eq:maxine}, hence we infer
\begin{multline}
   \int_{\mathbb{T}^2}\int_t^s\left|\left( u^\alpha(\tau,X^\alpha_{t, \tau})-u(\tau,X_{t, \tau}) \right) \left( X^\alpha_{t,\tau}-X_{t,\tau}\right)\right|d \tau dx\\ \leq  \frac{1}{2}\|u^\alpha-u\|_{L^\infty_t L^2_x}|t-s|+ \int_{\mathbb{T}^2}\int_t^s\frac{|X^\alpha_{t,\tau}-X_{t,\tau}|^2}{2}d\tau dx\\ + \int_{\mathbb{T}^2}\int_t^s \frac{|X^\alpha_{t,\tau}-X_{t,\tau}|^2}{2}\left[\mathcal{M}\nabla u(\tau, \cdot)(X^\alpha_{t,\tau})+\mathcal{M}\nabla u(\tau, \cdot)(X_{t,\tau}))\right]d \tau dx.\label{eq:citastefano}
\end{multline}
 By substituting \eqref{eq:citaosgood1} into \eqref{eq:citastefano} and employing Hölder inequality on the last term, we get
 \begin{multline}
       \int_{\mathbb{T}^2}\frac{|X^\alpha_{t,s}-X_{t,s}|^2}{2} dx\leq \frac{1}{2}\|u^\alpha-u\|_{L^\infty_t L^2_x}|t-s|+ \int_{\mathbb{T}^2}\int_t^s\frac{|X^\alpha_{t,\tau}-X_{t,\tau}|^2}{2}d\tau dx\\ + 
      \|\nabla u \|_{L^p}\int_t^s\int_{\mathbb{T}^2}\left( \frac{|X^\alpha_{t,\tau}-X_{t,\tau}|^2}{2}\right)^{\frac{p-1}{p}}ds dx,
       \label{eq:cita2osgood}
 \end{multline}
where we have used that $X^\alpha_{t, \tau}$ and $X_{t, \tau}$ take values in $\mathbb{T}^2$.\\
 Let $y(t,s)=\int_{\mathbb{T}^2}\left( \frac{|X^\alpha_{t,s}-X_{t,s}|^2}{2}\right) dx$, then the equation \eqref{eq:cita2osgood} can be written as
 \begin{flalign*}
 \begin{cases}
 y(t,s) &\leq   K(\alpha,t) +\int^t_s\left(  y(t,\tau)+C p y(t,\tau)^{\frac{p-1}{p}}\right)d \tau\\
 y(t,t) &= 0.
 \end{cases}
\end{flalign*}
 Noticing the similarity with \eqref{eq:odiosgood}, we can proceed in analogous way because $y \in (0,1)$. We define $p = 2- \log(y(t,\tau))$ and applying \textsc{Lemma \ref{lemma:osgood}}, we obtain
 \begin{equation*}
     -\log(2-\log(y(t,s))+\log(2-\log(K(\alpha,t)) \leq C(t-s),
 \end{equation*}
 which implies
 \begin{equation}
    \int_{\mathbb{T}^2} |X^\alpha_{t,s}-X_{t,s}|^2  dx \leq 2 \exp\left\{2-2e^{-c(t-s)} \right\}K(\alpha,t)e^{-CT}. \label{eq:rateflow}
 \end{equation}
\textsc{\textbf{Step 2}: Quantitative estimate for the vorticities.}
Since $\omega_0 \in L^\infty(\mathbb{T}^2)$ and therefore in $L^1(\mathbb{T}^2)$, there exists a modulus of continuity in $L^1$, that is a continuous function $\psi_{\omega_0}(\cdot)$ vanishing at zero, such that
\begin{equation*}
    \|\omega_0(\cdot+h)-\omega_0(\cdot)\|_{L^1(\mathbb{T}^2)}\leq \psi_{\omega_0}(|h|), \quad \text{for any}\quad h \in \mathbb{T}^2.
\end{equation*}
Therefore for any $\varepsilon>0$, we get
\begin{align*}
    \|q^\alpha(t) -\omega(t)\|_{L^1(\mathbb{T}^2)}&= \|q^\alpha_0(X^\alpha_{t,0}) -\omega_0(X_{t,0})\|_{L^1(\mathbb{T}^2)}\\
    & \leq \int_{\dist\left(X^\alpha_{t,0}(x),X_{t,0}(x)\right)\leq \varepsilon}\left|\omega_0(X^\alpha_{t,0}(x)) -\omega_0(X_{t,0}(x))\right| dx\\&+ \int_{\dist\left(X^\alpha_{t,0}(x),X_{t,0}(x)\right)> \varepsilon}\left|\omega_0(X^\alpha_{t,0}(x)) -\omega_0(X_{t,0}(x))\right| dx\\&+\|q^\alpha_0(X^\alpha_{t,0})-\omega_0(X^\alpha_{t,0})\|_{L^1}\\
    & \leq \psi_{\omega_0}(\varepsilon)+\frac{2 \|\omega_0\|_{L^\infty(\mathbb{T}^2)}}{\varepsilon^2}  \int_{\mathbb{T}^2} |X^\alpha_{t,s}-X_{t,s}|^2dx +\|q^\alpha_0(X^\alpha_{t,0})-\omega_0(X^\alpha_{t,0})\|_{L^1}.
\end{align*}
Exploiting the fact that $X^\alpha_{t,s}$ is measure preserving and using inequality \eqref{eq:rateflow}, we infer
\begin{equation}
     \|q^\alpha(t) -\omega(t)\|_{L^1(\mathbb{T}^2)} \leq \psi_{\omega_0}(\varepsilon)+\frac{2 \|\omega_0\|_{L^\infty(\mathbb{T}^2)}}{\varepsilon^2} \left\{2-2e^{-c(t-s)} \right\}K(\alpha,t)e^{-CT} +\|q^\alpha_0-\omega_0\|_{L^1}.\label{eq:citareee}
\end{equation}
Finally, we take $\varepsilon =K(\alpha,t)e^{-\frac{CT}{4}}$ and interpolate $L^p$ between $L^1$ and $L^\infty$ to obtain the thesis.
\end{proof}
Combining \textsc{Proposition \ref{thm:chemin}} with \textsc{Proposition \ref{prop:thm2second}} completes the proof of \textsc{Theorem \ref{thm:rate}}.
Lastly, the proof of \textsc{Corollary \ref{cor:besov}} follows directly from the definition of Besov spaces.
\begin{proof}[Proof of Corollary \ref{cor:besov}] 
We prove a control on the modulus of continuity under the additional hypothesis $\omega_0 \in B^s_{p, \infty}(\mathbb{T}^2)$. A well known characterization of the Besov spaces $B^s_{p,\infty}(\mathbb{T}^2)$, for $s \in (0,1)$, is given by
\begin{equation}
B^s_{p,\infty}(\mathbb{T}^2)=\left\{ 	f(x) \in L^p(\mathbb{T}^2) \enskip \text{s.t.}\enskip \sup_{h \in \mathbb{T}^2 \setminus \{0\}} \frac{\|f(\cdot+h)-f(\cdot)\|_{L^p}}{|h|^s} < \infty\right\}. \label{eq:defbesov}
\end{equation}
Hence, the characterization \eqref{eq:defbesov} implies
\begin{equation*}
\|\omega_0 (\cdot+h)-\omega_0(\cdot)\|_{L^p} \leq C|h|^s,
\end{equation*}
thus, we can take $\psi_{\omega_0}(|h|) \simeq |h|^s $ into \eqref{eq:modulcontest} and therefore \eqref{eq:ratebesov} follows.
\end{proof}
\bibliographystyle{plain}
\bibliography{sources}
\end{document}